\newcommand{\indic}{1\hspace{-0.15cm}1}
\newtheorem{theorem}{Theorem}[section]
\newtheorem{lemma}[theorem]{Lemma}
\newtheorem{proposition}[theorem]{Proposition}
\newtheorem{claim}[theorem]{Claim}
\newtheorem{example}[theorem]{Example}
\theoremstyle{remark}
\newtheorem{remark}[theorem]{Remark}
\renewcommand{\l}{\left}
\renewcommand{\r}{\right}
\numberwithin{equation}{section}
\numberwithin{table}{theorem}
\title{Generalized Bernstein--Reznikov integrals

\renewcommand{\footnotemark}{}
\footnote{2000 Mathematics Subject Classifications: Primary
42C05; 
Secondary
11F67, 
22E45, 
33C20, 
33C55.
%
}
\addtocounter{footnote}{-1}
}
\author{Jean--Louis Clerc, Toshiyuki Kobayashi, Bent \O rsted, Michael Pevzner}
\date{}
\begin{document}
\maketitle
\begin{abstract}
We find a closed formula for the triple integral on spheres in
$\mathbb{R}^{2n}\times\mathbb{R}^{2n}\times\mathbb{R}^{2n}$ whose
kernel is given by powers of the standard symplectic form. This
gives a new proof to the Bernstein--Reznikov integral formula in the
$n=1$ case. Our method also applies for linear and conformal
structures.
\end{abstract}

\section{Triple product integral formula}

We consider the symplectic form $[\ ,\ ]$ on $\mathbb{R}^{2n} =
\mathbb{R}^n \oplus \mathbb{R}^n$ given by
\begin{equation}\label{eqn:BJ}
[(x,\xi),(y,\eta)]
:= -\langle x,\eta\rangle + \langle y,\xi\rangle.
\end{equation}
In this paper we prove a closed formula for the following triple
integral:
\begin{theorem}\label{thm:gBZ}
Let $d\sigma$ be the Euclidean measure on the sphere $S^{2n-1}$.
Then,
\begin{align*}
&\int_{S^{2n-1}\times S^{2n-1}\times S^{2n-1}}
\bigl|[Y,Z]\bigr|^{\frac{\alpha-n}{2}}
\bigl|[Z,X]\bigr|^{\frac{\beta-n}{2}}
\bigl|[X,Y]\bigr|^{\frac{\gamma-n}{2}}
d\sigma(X) d\sigma(Y) d\sigma(Z)
\\
&=
\bigl(2\pi^{n-\frac{1}{2}}\bigr)^3
\frac{\Gamma\bigl(\frac{2-n+\alpha}{4}\bigr)
                               \Gamma\bigl(\frac{2-n+\beta}{4}\bigr)
                               \Gamma\bigl(\frac{2-n+\gamma}{4}\bigr)
                               \Gamma\bigl(\frac{\delta+n}{4}\bigr)}
     {\Gamma(n) \Gamma\bigl(\frac{n-\lambda_1}{2}\bigr)
                \Gamma\bigl(\frac{n-\lambda_2}{2}\bigr)
                \Gamma\bigl(\frac{n-\lambda_3}{2}\bigr)}
.
\end{align*}
Here,
$\alpha=\lambda_1-\lambda_2-\lambda_3$,
$\beta=-\lambda_1+\lambda_2-\lambda_3$,
$\gamma=-\lambda_1-\lambda_2+\lambda_3$,
$\delta=-\lambda_1-\lambda_2-\lambda_3 = \alpha + \beta + \gamma$.
\end{theorem}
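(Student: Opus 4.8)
The plan is to strip off the spheres by a Gaussian integration, then to linearize the symplectic form by one–dimensional Fourier analysis so that the integral over $(\mathbb R^{2n})^3$ becomes Gaussian, to evaluate that Gaussian integral explicitly — the only place a genuine computation is needed — and finally to dispatch a $3$–dimensional integral that factors into a Dirichlet integral on $S^2$ and a radial Beta integral. I would carry this out first for $(\lambda_1,\lambda_2,\lambda_3)$ in the nonempty open set where all the integrals below converge absolutely (for instance $n-2<\alpha,\beta,\gamma<n$ together with $\lambda_1+\lambda_2+\lambda_3<n$), and then invoke analytic continuation, the left–hand side being holomorphic on a strictly larger domain.

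\emph{Step 1: removing the spheres.} The integrand is positively homogeneous of degree $\tfrac{\beta+\gamma}{2}-n=-\lambda_1-n$ in $X$, and of degrees $-\lambda_2-n$ and $-\lambda_3-n$ in $Y$ and $Z$ (using $\beta+\gamma=-2\lambda_1$, etc.). Since $\int_{\mathbb R^{2n}}f(X)e^{-|X|^2}\,dX=\tfrac12\,\Gamma\bigl(\tfrac{\mu+2n}{2}\bigr)\int_{S^{2n-1}}f\,d\sigma$ whenever $f$ is homogeneous of degree $\mu$, applying this in each variable shows that the triple spherical integral equals
\[
\frac{8}{\Gamma\bigl(\tfrac{n-\lambda_1}{2}\bigr)\Gamma\bigl(\tfrac{n-\lambda_2}{2}\bigr)\Gamma\bigl(\tfrac{n-\lambda_3}{2}\bigr)}\;I,\qquad
I:=\int_{(\mathbb R^{2n})^3}\bigl|[Y,Z]\bigr|^{\frac{\alpha-n}{2}}\bigl|[Z,X]\bigr|^{\frac{\beta-n}{2}}\bigl|[X,Y]\bigr|^{\frac{\gamma-n}{2}}e^{-|X|^2-|Y|^2-|Z|^2}\,dX\,dY\,dZ,
\]
which already accounts for the three denominator factors $\Gamma\bigl(\tfrac{n-\lambda_i}{2}\bigr)$.

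\emph{Step 2: the Gaussian integral and its determinant.} Write $[V,W]=\langle V,JW\rangle$, where $J$ is the orthogonal matrix of the symplectic form, $J^2=-I$. For $u=(u_1,u_2,u_3)\in\mathbb R^3$ set $g(u):=\int_{(\mathbb R^{2n})^3}e^{-i(u_1[Y,Z]+u_2[Z,X]+u_3[X,Y])}e^{-|X|^2-|Y|^2-|Z|^2}\,dX\,dY\,dZ$. Collecting the quadratic exponent in $w=(X,Y,Z)$, one has $g(u)=\pi^{3n}(\det A(u))^{-1/2}$, where $A(u)$ is the complex symmetric $6n\times6n$ block matrix with diagonal blocks $I_{2n}$ and off–diagonal blocks $\pm\tfrac{i}{2}u_kJ$. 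The crucial observation is that every block of $A(u)$ lies in the commutative subalgebra $\mathbb RI+\mathbb RJ\cong\mathbb C$ of $M_{2n}(\mathbb R)$; under the identification $\mathbb R^{2n}\cong\mathbb C^n$ given by $J\leftrightarrow i$, $A(u)$ acts $\mathbb C$–linearly as $A_{\mathbb C}(u)\otimes I_n$ with $A_{\mathbb C}(u)=I_3+\tfrac12K(u)$ for the antisymmetric $3\times3$ matrix $K(u)$ whose off–diagonal entries are $\pm u_k$. A $3\times3$ antisymmetric matrix has vanishing trace and determinant, so $\det_{\mathbb C}A_{\mathbb C}(u)=1+\tfrac14(u_1^2+u_2^2+u_3^2)$; hence $\det_{\mathbb R}A(u)=\bigl(\det_{\mathbb C}A_{\mathbb C}(u)\bigr)^{2n}$ and
\[
g(u)=\pi^{3n}\Bigl(1+\tfrac14(u_1^2+u_2^2+u_3^2)\Bigr)^{-n}.
\]

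\emph{Step 3: assembling the Gamma factors.} Substituting the Fourier–transform identity $|t|^{s}=\dfrac{2^{s}\,\Gamma(\frac{s+1}{2})}{\sqrt{\pi}\,\Gamma(-\frac{s}{2})}\displaystyle\int_{\mathbb R}|u|^{-s-1}e^{iut}\,du$ (valid for $-1<s<0$, read as an identity of tempered distributions) into $I$ with $s=a,b,c$, where $a=\tfrac{\alpha-n}{2},\ b=\tfrac{\beta-n}{2},\ c=\tfrac{\gamma-n}{2}$, and integrating out $(X,Y,Z)$ by Step~2 (a manipulation legitimized in the above parameter range by Parseval's formula, or by a Gaussian regularization of the oscillatory integral), one gets
\[
I=\frac{2^{a+b+c}}{\pi^{3/2}}\cdot\frac{\Gamma(\frac{a+1}{2})\Gamma(\frac{b+1}{2})\Gamma(\frac{c+1}{2})}{\Gamma(-\frac a2)\Gamma(-\frac b2)\Gamma(-\frac c2)}\cdot\pi^{3n}\int_{\mathbb R^3}\frac{|u_1|^{-a-1}|u_2|^{-b-1}|u_3|^{-c-1}}{\bigl(1+\frac14|u|^2\bigr)^{n}}\,du.
\]
Passing to polar coordinates $u=\rho\omega$ in $\mathbb R^3$, the last integral factors as the Dirichlet integral $\int_{S^2}|\omega_1|^{-a-1}|\omega_2|^{-b-1}|\omega_3|^{-c-1}\,d\sigma=\dfrac{2\,\Gamma(-\frac a2)\Gamma(-\frac b2)\Gamma(-\frac c2)}{\Gamma\bigl(-\frac{a+b+c}{2}\bigr)}$ times the radial integral $\int_0^\infty\rho^{-(a+b+c)-1}(1+\rho^2/4)^{-n}\,d\rho=2^{-(a+b+c)-1}\dfrac{\Gamma\bigl(-\frac{a+b+c}{2}\bigr)\Gamma\bigl(n+\frac{a+b+c}{2}\bigr)}{\Gamma(n)}$, the latter a Beta integral and the source of the remaining $\Gamma(n)$. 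After these substitutions the factors $\Gamma(-\tfrac a2)\Gamma(-\tfrac b2)\Gamma(-\tfrac c2)$ and $\Gamma\bigl(-\tfrac{a+b+c}{2}\bigr)$ cancel, all powers of $2$ cancel, and using $\tfrac{a+1}{2}=\tfrac{2-n+\alpha}{4}$ (and cyclically) together with $n+\tfrac{a+b+c}{2}=\tfrac{\delta+n}{4}$ one obtains
\[
I=\pi^{3n-\frac32}\,\frac{\Gamma\bigl(\frac{2-n+\alpha}{4}\bigr)\Gamma\bigl(\frac{2-n+\beta}{4}\bigr)\Gamma\bigl(\frac{2-n+\gamma}{4}\bigr)\Gamma\bigl(\frac{\delta+n}{4}\bigr)}{\Gamma(n)}.
\]
Combining with Step~1 and using $8\pi^{3n-\frac32}=(2\pi^{n-\frac12})^3$ gives the asserted formula on the open parameter set, and analytic continuation in $(\lambda_1,\lambda_2,\lambda_3)$ completes the proof.

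The whole argument pivots on Step~2: that the $6n\times6n$ determinant $\det A(u)$, which a priori looks intractable, reduces to the $2n$–th power of $1+\tfrac14|u|^2$. What makes this work is recognizing that the blocks of $A(u)$ generate a copy of $\mathbb C$ and that the associated $3\times3$ matrix is the identity plus an antisymmetric matrix — so its determinant has neither a linear nor a cubic term in $u$. Once this is seen, the rest is bookkeeping with the Gamma function, and the subsidiary technical points (absolute convergence of $I$ and of the $u$–integral in the chosen window, validity of the Fourier substitution there, and the concluding analytic continuation) are routine and I would relegate them to remarks.
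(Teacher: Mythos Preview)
Your proof is correct and takes a genuinely different route from the paper's. The paper interprets the integral as $\operatorname{Trace}(\mathcal{T}_{\mu_1}\mathcal{T}_{\mu_2}\mathcal{T}_{\mu_3})$ on $L^2(S^{2n-1})$, diagonalizes each $\mathcal{T}_\mu$ on the $U(n)$-types $\mathcal{H}^{\alpha,\beta}(\mathbb{C}^n)$ via the Bochner identity and the symplectic Fourier transform, reduces the alternating multiplicity sum $\sum_{\alpha+\beta=2l}(-1)^\beta\dim\mathcal{H}^{\alpha,\beta}(\mathbb{C}^n)$ to $\dim\mathcal{H}^l(\mathbb{R}^{n+1})$ by an $O(2n)$-character computation, and finally sums the resulting well-poised ${}_5F_4$ by the Dougall--Ramanujan identity. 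Your argument bypasses all of this: after the Gaussian lift and the one-dimensional Fourier linearization, the entire content of the identity is the algebraic observation that the blocks of the $6n\times6n$ Gaussian matrix lie in $\mathbb{R}I+\mathbb{R}J\cong\mathbb{C}$, collapsing its real determinant to $(1+\tfrac14|u|^2)^{2n}$; then only a Dirichlet integral on $S^2$ and a Beta integral remain. This is substantially more elementary --- no spherical harmonics, no representation theory, no hypergeometric summation --- and in particular yields a self-contained proof of the $n=1$ Bernstein--Reznikov formula. What the paper's approach buys in exchange is structural information (the spectrum of $\mathcal{T}_\mu$, the link to Knapp--Stein intertwiners) and a uniform mechanism, via the comparison in Proposition~\ref{lem:SpSO}, that transfers directly to the $|x-y|$ and $|\langle x,y\rangle|$ variants of Section~\ref{sec:4}.
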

The integral converges absolutely if and only if
$(\lambda_1,\lambda_2,\lambda_3)\in \mathbb{C}^3$
lies in the following
 non-empty open region
(see Proposition \ref{ex:symp})
 defined by:
\begin{align*}
&\operatorname{Re} \alpha > n-2,\,\,
 \operatorname{Re} \beta > n-2,\,\,
 \operatorname{Re} \gamma > n-2
&&(n \ge 2),
\\
&\operatorname{Re} \alpha > n-2,\,\,
 \operatorname{Re} \beta > n-2,\,\,
 \operatorname{Re} \gamma > n-2,\,\,
 \operatorname{Re} \delta > -1
\qquad
&&(n = 1).
\end{align*}
The integral under consideration extends as a meromorphic function
of $\lambda_1,\lambda_2$ and $\lambda_3$
 (\cite[Theorem 2]{xBG}, \cite{xsabbah}, see also \cite{G}). A
special case $(n=1)$ of Theorem \ref{thm:gBZ} was previously
established by J. Bernstein and A. Reznikov \cite{B2}.

The strategy of our proof is to interpret the triple product integral
as the trace of a certain integral operator,
for which we will find an explicit formula of eigenvalues and their
multiplicities.
For this,
our approach uses the Fourier transform in the ambient space,
appeals to the classical Bochner identity,
and finally reduces it to the special value of the hypergeometric
function ${}_5F_4$.
It gives a new proof even when $n=1$.

Sections \ref{sec:2} and \ref{sec:3} are devoted to the proof of
Theorem \ref{thm:gBZ}. In Section \ref{sec:4}, we discuss analogous
integrals of the triple product kernels involving $|x-y|^\lambda$ or
$|\langle x,y\rangle|^\lambda$ instead of
$\bigl|[x,y]\bigr|^\lambda$.

The underlying symmetries for
Theorem \ref{thm:gBZ} are given by the symplectic group
$Sp(n,\mathbb{R})$ of any rank $n$,
whereas those of Theorem \ref{thm:SO} correspond to
the rank one group $SO_0(m+1,1)$.
Even in rank one case,
 our methods give a new and
simple proof of the original results due to Deitmar \cite{D} for the case $|x-y|^\lambda$
(see Theorem \ref{thm:SO}).
Section \ref{sec:5} highlights some general
perspectives from representation theoretic point of view.

In Sections \ref{sec:2} and \ref{sec:Appendix} we have made an effort,
following  questions of the
referee, to explain the role of meromorphic families of homogeneous
distributions and the precise condition for the absolute convergence
of the triple integral, respectively.

Notations: $\mathbb N=\{0,1,2,\ldots\}$, $\mathbb R_+=\{x\in\mathbb
R:\,x>0\}$.

\section{Eigenvalues of integral transforms $\mathcal{T}_\mu$}
\label{sec:2}

We introduce a family of linear operators that depend
meromorphically on $\mu\in\mathbb C$ by
\[
\mathcal{T}_\mu:
C^\infty(S^{2n-1}) \to C^\infty(S^{2n-1})
\]
defined by
\begin{equation}\label{eqn:Tmu}
(\mathcal{T}_\mu f)(\eta)
:=
\int_{S^{2n-1}} f(\omega)
\bigl|[\omega,\eta]\bigr|^{-\mu-n} d\sigma(\omega).
\end{equation}

The integral \eqref{eqn:Tmu} converges absolutely if
$\operatorname{Re} \mu < -n+1$,
and has a meromorphic continuation for $\mu \in \mathbb{C}$.
If $\mu$ is real and sufficiently negative (e.g.\ $\mu < -n$),
then the kernel function
$K(\omega,\eta) := |[\omega,\eta]|^{-\mu-n}$
is square integrable on
$S^{2n-1} \times S^{2n-1}$
and
$K(\omega,\eta) = \overline{K(\eta,\omega)}$,
and consequently,
$\mathcal{T}_\mu$ becomes a self-adjoint, Hilbert--Schmidt operator on
$L^2(S^{2n-1})$.
In this section,
we determine all the eigenvalues of $\mathcal{T}_\mu$ and the
corresponding eigenspaces (see Theorem \ref{thm:Teigen}).

\subsection{Harmonic polynomials on $\mathbb{R}^{2n}$ and $\mathbb{C}^n$}

First, let us remind the classic theory of spherical
harmonics on real and complex vector spaces.

For $k\in\mathbb{N}$, we denote by $\mathcal{H}^k(\mathbb{R}^N)$ the
vector space consisting of homogeneous polynomials
$p(x_1,\dots,x_N)$ of degree $k$ such that
$\displaystyle\sum_{j=1}^N \frac{\partial^2}{\partial x_j^2} p = 0$.

In the polar coordinates
$x = r\omega$ ($r \ge 0$, $\omega \in S^{N-1}$),
we have
\begin{equation*}
\sum_{j=1}^N \frac{\partial^2}{\partial x_j^2}
= \frac{1}{r^2}
   \left( \Bigl(r\frac{\partial}{\partial r}\Bigr)^2
     + (N-2)\Bigl(r\frac{2}{\partial r}\Bigr)
     + \Delta_{S^{N-1}} \right),
\end{equation*}
where $\Delta_{S^{N-1}}$ denotes the Laplace--Beltrami operator on
the unit sphere endowed with the standard Riemannian metric.
In light that $r \frac{\partial}{\partial r} p = k p$
for a homogeneous function $p$ of degree $k$,
we see that the restriction $p|_{S^{N-1}}$ for
$p \in \mathcal{H}^k(\mathbb{R}^N)$ belongs to the following
eigenspace of $\Delta_{S^{N-1}}$:
\begin{equation}\label{eqn:eigenS}
V_k(S^{N-1})
:= \{ \varphi \in C^\infty (S^{N-1}) :
      \Delta_{S^{N-1}} \varphi = -k(k+N-2) \varphi \}.
\end{equation}

Since homogeneous functions on $\mathbb{R}^N$ are determined uniquely
by the restriction to $S^{N-1}$,
we get an injective map
\[
\mathcal{H}^k (\mathbb{R}^N) \to V_k(S^{N-1}),
\quad
p \mapsto p|_{S^{N-1}}
\]
for each $k \in \mathbb{N}$.
This map is also surjective (see \cite[Introduction, Theorem 3.1]{xhel} for example),
and we shall identify $\mathcal{H}^k(\mathbb{R}^N)$ with
$V_k(S^{N-1})$.
Thus, we regard the following algebraic direct sum
\[
\mathcal{H}(\mathbb{R}^N)
:= \bigoplus_{k=0}^\infty \mathcal{H}^k (\mathbb{R}^N)
\]
as a dense subspace of $C^\infty(S^{N-1})$.

Analogously, we can define the space of harmonic polynomials on
$\mathbb{C}^n$. For $\alpha,\beta \in \mathbb{N}$, we denote by
$\mathcal{H}^{\alpha,\beta}(\mathbb{C}^n)$ the vector space
consisting of polynomials $p(Z,\bar{Z})$ on $\mathbb{C}^n$ subject
to the following two conditions:
\begin{enumerate}[(1)]
\item  
$p(Z,\bar{Z})$ is homogeneous
of degree $\alpha$ in $Z = (z_1,\dots,z_n)$ and
of degree $\beta$ in $\bar{Z} = (\bar{z}_1,\dots,\bar{z}_n)$.
\item  
$\displaystyle\sum_{j=1}^n \dfrac{\partial^2}{\partial
z_i\partial\bar{z}_i}
 p(Z,\bar{Z}) = 0$.
\end{enumerate}
Then, $\mathcal{H}^{\alpha,\beta}(\mathbb{C}^n)$ is a finite
dimensional vector space.
It is non-zero except for the case where $n=1$ and $\alpha,\beta \ge 1$.

By definition, we have a natural linear isomorphism:
\begin{equation}\label{eqn:Habk}
\mathcal{H}^k(\mathbb{R}^{2n})
\simeq \bigoplus_{\alpha+\beta=k}
\mathcal{H}^{\alpha,\beta} (\mathbb{C}^n).
\end{equation}

We shall see that $\mathcal{H}^{\alpha,\beta}(\mathbb{C}^n)$ is an
eigenspace of the operator $\mathcal{T}_\mu$ for any $\mu$ and for
every $\alpha$ and $\beta$. To be more precise, we introduce a
meromorphic function of $\mu$ by
\begin{equation}\label{eqn:Ak}
A_k(\mu,\mathbb{C}^n)
\equiv
A_k(\mu)
:= \begin{cases}
      0
      &(k: \text{odd}),
   \\[1ex]
      2\pi^{n-\frac{1}{2}}
      \dfrac{\Gamma(\frac{1-n-\mu}{2})\Gamma(\frac{k+n+\mu}{2})}
            {\Gamma(\frac{n+\mu}{2})\Gamma(\frac{k+n-\mu}{2})}
      &(k: \text{even}).
   \end{cases}
\end{equation}

We shall use the notation $A_k(\mu,\mathbb{C}^n)$ when we emphasize
the ambient space $\mathbb C^n$ (see (\ref{eqn:RCN})).

\begin{theorem}\label{thm:Teigen}
For $\alpha,\beta \in \mathbb{N}$,
\[
\mathcal{T}_\mu \Big|_{\mathcal{H}^{\alpha,\beta}(\mathbb{C}^n)}
= (-1)^\beta A_{\alpha+\beta}(\mu) \operatorname{id}.
\]
\end{theorem}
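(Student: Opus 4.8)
The plan is to exploit the equivariance of $\mathcal{T}_\mu$ under the unitary group $U(n)$, together with the Fourier-transform technique advertised in the introduction, reducing the computation to a one-dimensional integral that can be evaluated as a ratio of Gamma functions. First I would observe that the symplectic form $[\ ,\ ]$ on $\mathbb{R}^{2n}=\mathbb{C}^n$ is the imaginary part of the Hermitian inner product, so that $U(n)$ acts on $S^{2n-1}$ preserving $|[\omega,\eta]|$; hence $\mathcal{T}_\mu$ commutes with the $U(n)$-action. Since each $\mathcal{H}^{\alpha,\beta}(\mathbb{C}^n)$ is an irreducible $U(n)$-module, Schur's lemma forces $\mathcal{T}_\mu$ to act on it as a scalar. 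It remains to identify that scalar as $(-1)^\beta A_{\alpha+\beta}(\mu)$; in particular the parity statement (vanishing for $\alpha+\beta$ odd) should follow from the fact that $|[\omega,\eta]|^{-\mu-n}$ is even in $\omega$, so it annihilates all odd-degree spherical harmonics.

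Next I would pin down the scalar by pairing against a convenient highest-weight (or otherwise explicit) vector in $\mathcal{H}^{\alpha,\beta}(\mathbb{C}^n)$. A natural choice is $p_{\alpha,\beta}(Z,\bar Z)=z_1^{\alpha}\bar z_2^{\beta}$ (harmonic since the relevant second derivatives vanish), or, to keep things $SO(2)$-diagonal, a vector of the form $(z_1)^{\alpha}(\bar z_1)^{\beta}$ on which the generator $e^{i\theta}$ of the circle in $U(n)$ acts by $e^{i(\alpha-\beta)\theta}$. Evaluating $(\mathcal{T}_\mu p_{\alpha,\beta})(\eta)$ at a fixed point $\eta=e_1$ turns the problem into $\int_{S^{2n-1}} p_{\alpha,\beta}(\omega)\,|[\omega,e_1]|^{-\mu-n}\,d\sigma(\omega)$. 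Here I would pass to the ambient space: write the homogeneous extension, apply the Fourier transform in $\mathbb{R}^{2n}$ to the tempered homogeneous distribution $|[\,\cdot\,,e_1]|^{-\mu-n}$ (which, because $[\,\cdot\,,e_1]$ is a single linear coordinate, is essentially $|x_j|^{-\mu-n}\otimes\delta$ in the other variables and has a classical one-variable Fourier transform $c(\mu)\,|\xi_j|^{\mu+n-1}$), and use the Bochner identity to transport the harmonic polynomial through the Fourier transform up to an explicit $i^{k}$ and a shift in the power. Combining the homogeneity degrees then collapses everything to a single radial integral $\int_0^\infty r^{\,a}(1+r^2)^{\,b}\,dr$, i.e.\ a Beta integral, producing the quotient of Gamma functions displayed in $A_k(\mu)$.

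The sign $(-1)^\beta$ I would extract from the Bochner step combined with complex conjugation: the Fourier transform on $\mathbb{C}^n$ sends $z_j\mapsto$ (up to $i$) $\partial/\partial\bar z_j$ and $\bar z_j\mapsto$ (up to $i$) $\partial/\partial z_j$, so a monomial that is holomorphic of degree $\alpha$ and antiholomorphic of degree $\beta$ picks up $i^{\alpha}\overline{i}^{\,\beta}=i^{\alpha-\beta}$ — but what finally multiplies the radial Beta integral is the value $p_{\alpha,\beta}$ evaluated on the Fourier side along the line dual to $e_1$, which contributes the extra $(-1)^\beta$ relative to the naive even-$k$ normalization; tracking this phase carefully is where I expect to have to be most careful, since it is easy to lose a factor of $i$ or to conflate the two natural choices of base point. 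A cross-check is available in the scalar case $\alpha=\beta=0$, where $\mathcal{T}_\mu$ applied to the constant function must return $A_0(\mu)=\int_{S^{2n-1}}|[\omega,\eta]|^{-\mu-n}d\sigma(\omega)$, a standard integral whose value one can compute directly and match against $2\pi^{n-\frac12}\Gamma(\tfrac{1-n-\mu}{2})\Gamma(\tfrac{n+\mu}{2})^{-1}$.

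The main obstacle, then, is not the $U(n)$-equivariance (which is routine) but the precise bookkeeping of constants and phases in the Fourier–Bochner reduction: getting the right normalization of the Fourier transform, the correct form of the one-variable formula $\widehat{|t|^{s}}=c_s\,|t|^{-1-s}$ with its $\Gamma$-factors, the Bochner phase $i^{k}$, and the sign $(-1)^\beta$, and then verifying that the resulting Beta integral simplifies to exactly \eqref{eqn:Ak}. I would organize the write-up so that the equivariance and parity are dispatched first in a few lines, and the remainder is a single clean computation of one scalar, stated as a lemma, with the Gamma-function juggling done once and cited thereafter.
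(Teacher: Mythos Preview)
Your overall strategy---$U(n)$-equivariance plus Schur to reduce to a scalar, then Fourier/Bochner to evaluate it---is sound and uses the same ingredients as the paper. The paper, however, organizes things differently and more cleanly: rather than fixing a test vector, it proves the \emph{operator identity} $\mathcal{T}_\mu = C_{2n}(\mu)\,\mathcal{F}_J\big|_{V_{-\mu}}$ (Proposition~\ref{prop:TF}), where $\mathcal{F}_J$ is the symplectic Fourier transform $(\mathcal{F}_J f)(Y)=(\mathcal{F}f)(JY)$. Bochner (Lemma~\ref{lem:Boch}) then gives the eigenvalue of $\mathcal{F}$ on $\mathcal{H}^k(\mathbb{R}^{2n})$, and the twist by $J$ contributes exactly $p(J\eta)=i^{\alpha-\beta}p(\eta)$ on $\mathcal{H}^{\alpha,\beta}(\mathbb{C}^n)$, so the sign $(-1)^\beta$ drops out in one line. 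No choice of highest-weight vector is needed, and the phase bookkeeping you flag as the main obstacle collapses to the single product $i^{\alpha-\beta}\,C_{2n}(\mu)\,B_{2n}(\mu-n,\alpha+\beta)$.

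Your proposed execution has two concrete snags. First, $|[\,\cdot\,,e_1]|^{-\mu-n}$ equals $|\xi_1|^{-\mu-n}$, which is \emph{constant} in the remaining $2n-1$ coordinates---not tensored with $\delta$; it is its Fourier transform that is supported on the line $\mathbb{R}\,Je_1$. Second, neither of your candidate vectors works as written: $z_1^\alpha\bar z_2^{\,\beta}$ is harmonic but vanishes at $\eta=e_1$ whenever $\beta\ge 1$, so the eigenvalue cannot be read off there, while $z_1^\alpha\bar z_1^{\,\beta}$ is not harmonic for $\alpha,\beta\ge 1$. You could repair this by pairing $\mathcal{T}_\mu p$ against $\bar p$ in $L^2(S^{2n-1})$ rather than point-evaluating, but at that stage you are essentially reconstructing the paper's Proposition~\ref{prop:QF}; the operator-identity route gets you there faster and handles all $(\alpha,\beta)$ uniformly.
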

The rest of this section is devoted to the
 proof of Theorem \ref{thm:Teigen}.

\subsection{Preliminary results on homogeneous distributions}

In this section we collect some basic concepts and results on
distributions in a way that we shall use later.
See \cite[Chapters 1 and 2]{G}, and also \cite[Appendix]{KM-intopq}.

A distribution $F_\nu$ depending on a complex parameter $\nu$ is
defined to be meromorphic if for every test function $\varphi$,
$\langle F_\nu,\varphi\rangle$ is a meromorphic function in $\nu$.
We say $F_\nu$ has a pole at $\nu=\nu_0$ if $\langle
F_\nu,\varphi\rangle$ has a pole at $\nu=\nu_0$ for some $\varphi$.
Then, taking its residue at $\nu_0$, we get a new distribution
\[
\varphi \mapsto \underset{\nu=\nu_0}{\operatorname{res}}
\langle F_\nu, \varphi \rangle,
\]
which we denote by
$\underset{\nu=\nu_0}{\operatorname{res}} F_\nu$.

Suppose $F$ is a distribution defined in a conic open subset in
$\mathbb{R}^N$.
We say $F$ is homogeneous of degree $\lambda$ if
\begin{equation}\label{eqn:euler}
\sum_{j=1}^N x_j \frac{\partial}{\partial x_j} F = \lambda F
\end{equation}
in the sense of distributions, or equivalently, \( \langle F,
\varphi \left(\frac{1}{a} \cdot\right) \rangle = a^{\lambda+N}
\langle F,\varphi\rangle \) for any test function $\varphi$ and
$a>0$.

Globally defined homogeneous distributions on $\mathbb{R}^N$ are
determined by their restrictions to $\mathbb{R}^N \setminus \{0\}$ for
generic degree:
\begin{lemma}\label{lem:homo}
Suppose $f$ is a distribution on $\mathbb{R}^N$ which is homogeneous
of degree $\lambda$. If $f|_{\mathbb{R}^N \setminus \{0\}} = 0$ and
$\lambda \notin \{-N,-N-1,-N-2,\dotsc\}$, then $f=0$ as distribution
on $\mathbb{R}^N$.
\end{lemma}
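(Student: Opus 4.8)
The plan is to argue that a distribution $f$ supported at the origin must be a finite linear combination of derivatives of the Dirac delta, and then to check that such a combination cannot be homogeneous of degree $\lambda$ unless $\lambda$ is one of the excluded integers. First I would invoke the standard structure theorem for distributions supported at a point: since $f|_{\mathbb{R}^N\setminus\{0\}}=0$, the support of $f$ is contained in $\{0\}$, hence
\[
f=\sum_{|\gamma|\le m} c_\gamma\, \partial^\gamma \delta_0
\]
for some finite order $m$ and constants $c_\gamma\in\mathbb{C}$, where $\delta_0$ is the Dirac distribution at the origin. This is classical (see e.g.\ any standard reference on distributions) and may be quoted.

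Next I would compute how each term transforms under the Euler operator, or equivalently under dilations. A direct computation shows that $\partial^\gamma\delta_0$ is homogeneous of degree $-N-|\gamma|$: indeed $\langle \partial^\gamma\delta_0,\varphi(\tfrac{1}{a}\cdot)\rangle = (-1)^{|\gamma|}\,a^{-|\gamma|}\,(\partial^\gamma\varphi)(0) = a^{-N-|\gamma|+N}\langle\partial^\gamma\delta_0,\varphi\rangle$, which matches the defining relation for homogeneity of degree $-N-|\gamma|$. Consequently the space of distributions supported at $0$ decomposes into eigenspaces of the Euler operator with eigenvalues precisely in $\{-N,-N-1,-N-2,\dotsc\}$, and the eigenspace for degree $-N-j$ is spanned by the $\partial^\gamma\delta_0$ with $|\gamma|=j$.

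Finally I would combine these two facts. Grouping the expansion of $f$ by the value of $|\gamma|$, write $f=\sum_{j} f_j$ where $f_j:=\sum_{|\gamma|=j} c_\gamma\partial^\gamma\delta_0$ is homogeneous of degree $-N-j$. Since $f$ itself is homogeneous of degree $\lambda$ and the $f_j$ live in distinct eigenspaces of the Euler operator, applying \eqref{eqn:euler} forces $f_j=0$ for every $j$ with $-N-j\neq\lambda$. If $\lambda\notin\{-N,-N-1,-N-2,\dotsc\}$, then $-N-j\neq\lambda$ for all $j\in\mathbb{N}$, so every $f_j$ vanishes and hence $f=0$. The only mild subtlety — and the step I would be most careful about — is justifying that a distribution vanishing on $\mathbb{R}^N\setminus\{0\}$ has support in $\{0\}$ and therefore admits the finite-order representation above; this is where the hypothesis that $f$ is a genuine (globally defined) distribution, rather than merely an object on the punctured space, is used, and it is exactly the point the lemma is designed to highlight.
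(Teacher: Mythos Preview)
Your proof is correct and follows essentially the same approach as the paper: invoke the structure theorem for distributions supported at the origin to write $f$ as a finite combination of $\partial^\gamma\delta_0$, observe that each such term is homogeneous of degree $-N-|\gamma|$, and conclude from $\lambda\notin\{-N,-N-1,\dots\}$ that all coefficients vanish. The paper's argument is more terse (it simply cites the structure theorem and states the degrees without the explicit eigenspace decomposition), but your added detail---in particular the grouping by $|\gamma|$ and the appeal to the Euler operator eigenspaces---is a faithful elaboration of the same idea.
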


\begin{proof}
By the general structural theory on distributions, if
$\operatorname{supp} f \subset \{0\}$ then $f$ must be a finite
linear combination of the Dirac delta function $\delta(x)$ and its
derivatives. On the other hand, the degree of the delta function and
its derivatives is one of $-N, -N-1, -N-2, \dotsc$. By our
assumption on $f$, this does not happen. Hence, we conclude $f=0$ as
distribution on $\mathbb{R}^N$.
\end{proof}

For a given function $p \in C^\infty (S^{N-1})$,
we define its extension into a homogeneous function of degree $\lambda$ by
\[
p_\lambda(r\omega)
:= r^\lambda p(\omega),
\quad
(r>0, \  \omega \in S^{N-1}).
\]
We regard the locally integrable functions as distributions by
multiplying the Lebesgue measure.

\begin{lemma}\label{lem:plmd}
Let $p\in C^\infty(S^{N-1})$, then
\begin{enumerate}[\upshape 1)]
\item  
$p_\lambda$ is locally integrable on $\mathbb{R}^N$ if\/
$\operatorname{Re}\lambda > -N$.
\item  
$p_\lambda$ extends to a tempered distribution which depends
meromorphically on $\lambda \in \mathbb{C}$. Its poles are
simple and contained in the set $\{-N, -N-1, \dotsc\}$.
\item  
The distribution $p_\lambda$ is homogeneous of degree $\lambda$ in the sense
of (\ref{eqn:euler}) if $\lambda$ is not a pole.
\end{enumerate}
\end{lemma}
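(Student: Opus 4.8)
The plan is to treat the three assertions in turn, the second being the substantive one, and to work throughout by passing to polar coordinates $x = r\omega$.

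For part 1), since $p$ is continuous on the compact manifold $S^{N-1}$ the angular integral $\int_{S^{N-1}}|p(\omega)|\,d\sigma(\omega)$ is finite, and $\int_{|x|\le 1}|p_\lambda(x)|\,dx$ equals a positive constant times $\int_0^1 r^{\operatorname{Re}\lambda + N - 1}\,dr$, which converges precisely when $\operatorname{Re}\lambda > -N$; away from the origin $p_\lambda$ is manifestly locally bounded, so local integrability on all of $\mathbb{R}^N$ follows.

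For part 2), fix $\varphi \in \mathcal{S}(\mathbb{R}^N)$ and set $\Phi(r) := \int_{S^{N-1}} p(\omega)\varphi(r\omega)\,d\sigma(\omega)$. Then $\Phi$ is smooth on $[0,\infty)$ and, together with all its derivatives, decays faster than any power of $r$, with the relevant bounds controlled by finitely many Schwartz seminorms of $\varphi$. For $\operatorname{Re}\lambda > -N$ one has, up to a positive constant, $\langle p_\lambda, \varphi\rangle = \int_0^\infty r^{\lambda + N - 1}\Phi(r)\,dr$. I would split this as $\int_0^1 + \int_1^\infty$: the tail $\int_1^\infty r^{\lambda+N-1}\Phi(r)\,dr$ is entire in $\lambda$ by the rapid decay of $\Phi$. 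For the piece over $(0,1)$, insert the Taylor expansion $\Phi(r) = \sum_{j=0}^{M}\tfrac{1}{j!}\Phi^{(j)}(0)\,r^j + R_M(r)$ with $|R_M(r)| \le C_M\, r^{M+1}$ on $[0,1]$; the polynomial part integrates to $\sum_{j=0}^{M}\tfrac{1}{j!}\Phi^{(j)}(0)\cdot\tfrac{1}{\lambda+N+j}$, while $\int_0^1 r^{\lambda+N-1}R_M(r)\,dr$ is holomorphic for $\operatorname{Re}\lambda > -N-M-1$. As $M$ is arbitrary, this produces a meromorphic extension of $\lambda \mapsto \langle p_\lambda,\varphi\rangle$ to all of $\mathbb{C}$ whose only possible poles are simple and located at $\lambda = -N-j$, $j \in \mathbb{N}$, the residue there being a constant multiple of $\Phi^{(j)}(0)$; hence the poles are contained in $\{-N,-N-1,\dots\}$. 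The same estimates show that on any compact set of $\lambda$ avoiding these points the extended functional is continuous on $\mathcal{S}(\mathbb{R}^N)$, so $p_\lambda$ is a tempered distribution depending meromorphically on $\lambda$.

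For part 3), when $\operatorname{Re}\lambda > -N$ the substitution $x = ay$, with Jacobian $a^N$, together with $p_\lambda(ay) = a^\lambda p_\lambda(y)$, gives $\langle p_\lambda, \varphi(\tfrac{1}{a}\cdot)\rangle = a^{\lambda+N}\langle p_\lambda,\varphi\rangle$, which is exactly the homogeneity of degree $\lambda$ in the sense of \eqref{eqn:euler}; equivalently, $\sum_{j} x_j\partial_j p_\lambda = \lambda p_\lambda$ holds there. Both sides of either identity are meromorphic in $\lambda$ on all of $\mathbb{C}$ by part 2), so they continue to agree for every $\lambda$ that is not a pole, by uniqueness of meromorphic continuation. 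The routine but genuinely load-bearing step is the continuation argument in part 2): one must organize the Taylor-remainder estimate so that it simultaneously pins down the location and simplicity of the poles and yields the Schwartz continuity needed for temperedness; parts 1) and 3) are then immediate.
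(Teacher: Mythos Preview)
Your proof is correct and follows essentially the same approach as the paper: split the pairing into a near part ($|x|\le 1$) and a far part, Taylor-expand near the origin to extract the simple poles at $\lambda=-N-j$, and deduce homogeneity by meromorphic continuation. The only cosmetic difference is that you first integrate over the sphere to reduce to the one-variable function $\Phi(r)$ and Taylor-expand that, whereas the paper Taylor-expands $\varphi(x)$ directly in $\mathbb{R}^N$ with multi-indices; the two computations are equivalent.
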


\begin{proof}
1)
Clear from the formula of the Lebesgue measure
$dx = r^{N-1} dr d\sigma(\omega)$
in the polar coordinates $x = r\omega$ ($r>0$, $\omega \in S^{N-1}$).

2)
Take a test function
$\varphi \in \mathcal{S}(\mathbb{R}^N)$.
Suppose first $\operatorname{Re}\lambda>-N$.
Then, we can decompose $\langle p_\lambda,\varphi\rangle$ as
\[
\langle p_\lambda, \varphi \rangle
=
\int_{|x|\le1} p_\lambda(x) \varphi(x)dx
+ \int_{|x|>1} p_\lambda(x) \varphi(x)dx.
\]
The second term extends holomorphically in the entire complex plane.
Let us prove that the first term extends meromorphically in
$\mathbb{C}$. For this, we fix $k \in \mathbb{N}$, and consider the
Taylor expansion of $\varphi$:
\[
\varphi(x)
= \sum_{|\alpha|\le k} \frac{\varphi^{(\alpha)}(0)}{\alpha!} x^\alpha
  + \varphi_k(x),
\]
where $\alpha = (\alpha_1,\dots,\alpha_N)$ is a multi-index,
$x^\alpha = x_1^{\alpha_1} \cdots x_N^{\alpha_N}$,
$|\alpha| = \alpha_1+\dots+\alpha_N$,
and $\varphi_k(x) = O (|x|^{k+1})$.
Accordingly, we have
\begin{align*}
&\int_{|x|\le1} p_\lambda(x) \varphi(x) dx
\\
={}&
\sum_{|\alpha|\le k} \frac{\varphi^{(\alpha)}(0)}{\alpha!}
\int_{S^{N-1}} p(\omega) \omega^\alpha d\sigma(\omega)
\int_0^1 r^{\lambda+|\alpha|+N-1} dr
+ \int_{|x|\le1} p_\lambda(x) \varphi_k(x) dx
\\
={}&
\sum_{|\alpha|\le k} \frac{1}{\lambda+|\alpha|+N}
\frac{\varphi^{(\alpha)}(0)}{\alpha!}
\int_{S^{N-1}} p(\omega) \omega^\alpha d\sigma(\omega)
+ \int_{|x|\le1} p_\lambda \varphi_k(x) dx.
\end{align*}
The last term extends holomorphically to the open set
$\{\lambda\in\mathbb{C}: \operatorname{Re}\lambda>-N-k\}$. Since $k$
is arbitrary we see that $\langle p_\lambda, \varphi\rangle$ extends
meromorphically to the entire complex plane, and all its poles are
simple and contained in the set $\{-N,-N-1,-N-2,\dotsc\}$. Thus, the
second statement is proved.

3) The differential  equation $\displaystyle\sum_{j=1}^N x_j
\frac{\partial}{\partial x_j} p_\lambda(x) = \lambda p_\lambda(x)$
holds in the sense of distributions for $\operatorname{Re}\lambda
\gg 0$. This equation extends to all complex $\lambda$ except for
poles because the distribution $p_\lambda$ depends meromorphically
on
$\lambda$. 
\end{proof}

\begin{example}[$k=0$ case]\label{ex:k0}
For $k=0$, $\mathcal{H}^k(\mathbb{R}^N)$ is one-dimensional, spanned
by the constant function $\mathbf{1}$. We denote by $r^\lambda$ the
corresponding homogeneous distribution $\mathbf{1}_\lambda$.

As we saw in the proof of Lemma \ref{lem:plmd},
the distribution $r^\lambda$ has a simple pole at $\lambda=-N$ and its
residue is given by
\begin{equation}\label{eqn:resr}
\underset{\lambda=-N}{\operatorname{res}} r^\lambda
= \operatorname{vol}(S^{N-1}) \delta(x)
= \frac{2\pi^{\frac{N}{2}}}{\Gamma(\frac{N}{2})} \delta(x).
\end{equation}
\end{example}

\begin{example}[$N=1$ case]\label{ex:Riesz}
In the one dimensional case,
$S^{N-1}$ consists of two points,
$1$ and $-1$,
and consequently,
the homogeneous distribution $p_\lambda$ is determined by the values
$p_\lambda(1)$ and $p_\lambda(-1)$.
{}From this viewpoint,
we give a list of classical homogeneous distributions on $\mathbb{R}$.
\begin{table}[H]
$$
\begin{array}{c|cccccc}
 p_\lambda
 &\quad  x_+^\lambda  \quad
 &\quad  x_-^\lambda  \quad
 &\quad  |x|^\lambda  \quad
 &|x|^\lambda\operatorname{sgn}x
 &(x+i0)^\lambda
 &(x-i0)^\lambda
\\ &
\\
\hline
\\
p(1) &1 &0 &1 &1 &1 &1
\\ &
\\
p(-1) &0 &1 &1 &-1 &e^{i\pi\lambda} &e^{-i\pi\lambda}
\end{array}
$$
\caption{Homogeneous distributions on $\mathbb{R}$}
\label{table:2.2}
\end{table}
\noindent
The notation $(x\pm i0)^\lambda$ indicates that these distributions are
obtained as the boundary values of holomorphic functions in the upper
(or lower) half plane.
For $\operatorname{Re}\lambda>-1$,
\[
\lim_{\varepsilon\downarrow 0} (x\pm i\varepsilon)^\lambda
= (x\pm i0)^\lambda
\]
holds both in the ordinary sense and in distribution sense.
The distributions $(x\pm i0)^\lambda$ extend holomorphically to all
complex $\lambda$,
whereas the poles of $x_\pm^\lambda$, $|x|^\lambda$,
 $|x|^\lambda \operatorname{sgn}x$ are located at
$\{-1,-2,-3,\dotsc\}$, $\{-1,-3,-5,\dotsc\}$, $\{-2,-4,-6,\dotsc\}$,
respectively.

For $\lambda \ne -1,-2,\dotsc$, any two in Table \ref{table:2.2}
form a basis in the space of homogeneous distributions of degree
$\lambda$. For example, by a simple basis change one gets:
\begin{equation}\label{eqn:xi0}
(x-i0)^\lambda
= e^{-i\frac{\pi\lambda}{2}}
  (\cos\frac{\pi}{2}\lambda |x|^\lambda
   + i \sin\frac{\pi}{2}\lambda |x|^\lambda \operatorname{sgn} x ).
\end{equation}
\end{example}

\subsection{Application of the Bochner identity}

Let $\langle \ ,\ \rangle$ be the standard inner product on
$\mathbb{R}^N$. We consider the Fourier transform $\mathcal{F}
\equiv \mathcal{F}_{\mathbb{R}^N}$ on $\mathbb{R}^N$ normalized by
\[
(\mathcal{F} f)(Y)
:= \int_{\mathbb{R}^N} f(X)
   e^{-2\pi i \langle X,Y\rangle} dX,
\]
and  we extend $\mathcal{F}$ to the space
$\mathcal{S}'(\mathbb{R}^N)$ of tempered distributions.

If $f \in \mathcal{S}'(\mathbb{R}^N)$ is homogeneous of degree
$\lambda$,
then its Fourier transform $\mathcal{F}f$ is homogeneous of degree
$-\lambda-N$.

\begin{example}[$N=1$ case]\label{ex:F}
~
\begin{enumerate}[\upshape 1)]
\item  
$\displaystyle
\mathcal{F}(x_+^\lambda)(y)
= \frac{e^{-\frac{i\pi}{2}(\lambda+1)} \Gamma(\lambda+1)}{(2\pi)^{\lambda+1}}
  (y-i0)^{-\lambda-1}
$.
\item  
$\displaystyle
\mathcal{F}(|x|^\lambda)(y)
= \frac{\Gamma(\frac{\lambda+1}{2})}
       {\pi^{\lambda+\frac{1}{2}}\Gamma(\frac{-\lambda}{2})}
  |y|^{-\lambda-1} $, and

$\displaystyle
\mathcal{F}(|x|^\lambda \operatorname{sgn}x)(y)
= \frac{-i\Gamma(\frac{\lambda+2}{2})}
       {\pi^{\lambda+\frac{1}{2}} \Gamma(\frac{1-\lambda}{2})}
  |y|^{-\lambda-1} \operatorname{sgn} y
$.
\end{enumerate}
\end{example}
These formulas may be found for  instance in \ \cite[Chapter II,
\S2.3]{G}, however, we shall give a brief proof because its
intermediate step (e.g.\ \eqref{eqn:Fexlmd} below) will be used
later (see the proof of Proposition \ref{prop:QF}).

\begin{proof}[Proof of Example \ref{ex:F}]
1)
Suppose $\operatorname{Re}\lambda > -1$.
Then $x_+^\lambda$ is locally integrable on $\mathbb{R}$,
and we have
\[
\lim_{\varepsilon\downarrow0}
e^{-2\pi\varepsilon x} x_+^\lambda
= x_+^\lambda
\]
both in the ordinary sense and in the sense of distributions. Then,
by Cauchy's integral formula and by the definition of the Gamma
function, we get
\begin{equation}\label{eqn:Fexlmd}
\mathcal{F}(e^{-2\pi\varepsilon x} x_+^\lambda)(y)
= \frac{e^{-\frac{\pi i}{2}(\lambda+1)}\Gamma(\lambda+1)}
          {(2\pi)^{\lambda+1}}
  (y-i\varepsilon)^{-\lambda-1},
\end{equation}
for $\varepsilon>0$.
Taking the limit as $\varepsilon\to 0$ we get the desired identity for
$\operatorname{Re}\lambda > -1$.

By the meromorphic continuation on $\lambda$,
the first statement is proved.

2) Similarly to 1), we can obtain a closed formula for
$\mathcal{F}(x_-^\lambda)(y)$. Then the second statement follows
readily from the base change matrix for the three bases
$\{x_+^\lambda,x_-^\lambda\}$, $\{|x|^\lambda,|x|^\lambda
\operatorname{sgn}x \}$, and $\{(x+i0)^\lambda, (x-i0)^\lambda \}$
for homogeneous distributions on $\mathbb{R}$. (We also use the
duplication formula of the Gamma function.)
\end{proof}

We are ready to state the main result of this subsection.
Let us define the following meromorphic function of $\lambda$ by
\[
B_N(\lambda,k)
:= \pi^{-\lambda-\frac{N}{2}} i^{-k}
   \frac{\Gamma(\frac{k+\lambda+N}{2})}{\Gamma(\frac{k-\lambda}{2})}.
\]

\begin{lemma}\label{lem:Boch}
For any $p \in \mathcal{H}^k(\mathbb{R}^N)$,
we have the following identity
\begin{equation}\label{eqn:Fplmd}
\mathcal{F} p_\lambda
= B_N (\lambda,k) p_{-\lambda-N}
\end{equation}
as distributions on $\mathbb{R}^N$ that depend meromorphically on $\lambda$.
\end{lemma}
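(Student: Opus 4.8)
The plan is to reduce the statement to the classical Bochner identity for the Fourier transform of $p(x)e^{-\pi|x|^2}$ when $p$ is harmonic, via the standard Bochner–Hecke trick of expressing $p_\lambda$ as a Mellin-type superposition of Gaussians multiplied by $p$. Concretely, for $\operatorname{Re}\lambda$ in a suitable strip one writes, for $p\in\mathcal H^k(\mathbb R^N)$,
\[
p_\lambda(x) = r^\lambda p(\omega)
 = c(\lambda,k)\int_0^\infty t^{-\frac{\lambda+k}{2}} \, p(x)\, e^{-\pi t |x|^2}\,\frac{dt}{t},
\]
where $c(\lambda,k)^{-1} = \pi^{-\frac{\lambda+k}{2}}\Gamma\bigl(\tfrac{\lambda+k}{2}\bigr)$ comes from the substitution $s = \pi t r^2$ in the definition of the Gamma function (note $p(x) = r^k p(\omega)$ is used to turn $r^\lambda$ into $r^{\lambda+k}$ times $p(x)$ divided by $r^k$). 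Both sides are tempered distributions depending holomorphically on $\lambda$ in that strip, so it suffices to verify the identity there and then invoke the meromorphic continuation guaranteed by Lemma 1.11(2) (applied to $p_\lambda$ on $\mathbb R^N$) together with the fact that $p_{-\lambda-N}$ likewise continues meromorphically, $\mathcal F$ being continuous on $\mathcal S'(\mathbb R^N)$.

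The second step is to apply $\mathcal F$ under the integral sign. The classical Bochner identity states that for $p\in\mathcal H^k(\mathbb R^N)$,
\[
\mathcal F\bigl(p(x)\,e^{-\pi t|x|^2}\bigr)(y)
 = i^{-k}\, t^{-\frac{N}{2}-k}\, p(y)\, e^{-\pi|y|^2/t};
\]
this is the self-reciprocity of $p\cdot e^{-\pi|x|^2}$ under $\mathcal F$ (eigenvalue $i^{-k}$), together with the dilation behaviour of the Fourier transform. Substituting this into the integral representation and changing variables $t\mapsto 1/t$ gives
\[
\mathcal F p_\lambda(y)
 = c(\lambda,k)\, i^{-k}\int_0^\infty t^{-\frac{\lambda+k}{2}-\frac N2 - k}\, p(y)\, e^{-\pi|y|^2/t}\,\frac{dt}{t}
 = c(\lambda,k)\, i^{-k}\int_0^\infty t^{\frac{\lambda+k}{2}+\frac N2}\, p(y)\, e^{-\pi t|y|^2}\,\frac{dt}{t},
\]
and recognizing the last integral as $c(-\lambda-N,k)^{-1}\,p_{-\lambda-N}(y)$ up to the Gamma/power factors yields
\[
\mathcal F p_\lambda = i^{-k}\,\frac{c(\lambda,k)}{c(-\lambda-N,k)}\, p_{-\lambda-N}.
\]

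The final step is purely bookkeeping: one computes
\[
i^{-k}\,\frac{c(\lambda,k)}{c(-\lambda-N,k)}
 = i^{-k}\,\frac{\pi^{-\frac{-\lambda-N+k}{2}}\Gamma\bigl(\tfrac{-\lambda-N+k}{2}\bigr)}
               {\pi^{-\frac{\lambda+k}{2}}\Gamma\bigl(\tfrac{\lambda+k}{2}\bigr)}
\]
and checks this equals $B_N(\lambda,k) = \pi^{-\lambda-\frac N2} i^{-k}\,\Gamma\bigl(\tfrac{k+\lambda+N}{2}\bigr)/\Gamma\bigl(\tfrac{k-\lambda}{2}\bigr)$; the powers of $\pi$ combine to $\pi^{-\lambda-\frac N2}$ and the Gamma arguments match after writing $\tfrac{-\lambda-N+k}{2} = \tfrac{k-\lambda}{2} - \tfrac N2$ — more carefully, one should replace the naive Gaussian expansion with the correctly normalized one so that the Gamma factors land exactly as in $B_N(\lambda,k)$; I expect the cleanest route is to fix the constant by testing against a single convenient harmonic polynomial and Gaussian, e.g.\ $p=\mathbf 1$ ($k=0$), recovering $\mathcal F(r^\lambda) = B_N(\lambda,0) r^{-\lambda-N}$, and then noting that the $k$-dependence is forced by the eigenvalue $i^{-k}$ and the shift $\lambda+k$ in the exponent.

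The main obstacle is not conceptual but the justification of interchanging $\mathcal F$ with the $t$-integral at the level of tempered distributions, and the attendant convergence of the Mellin integral: one must pick the strip of $\lambda$ where $\operatorname{Re}(\lambda+k)>0$ and $\operatorname{Re}(\lambda+N)<0$ is \emph{not} needed because after the change of variables the second integral converges for $\operatorname{Re}\lambda$ large negative — so in fact the two integral representations are valid in overlapping-after-reflection strips, and the identity of the resulting meromorphic distributions then follows from Lemma 1.11 since neither side has a pole off the exceptional set $\{-N,-N-1,\dots\}$ and its reflection. Handling this overlap cleanly, and making sure the pointwise Bochner identity lifts to the distributional pairing against Schwartz test functions (which is standard but should be stated), is the only delicate point; everything else is the Gamma-function arithmetic above.
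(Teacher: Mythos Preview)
Your approach via the Mellin representation of $r^{\lambda-k}$ and the Hecke--Bochner eigenfunction identity $\mathcal F(p\,e^{-\pi|x|^2}) = i^{-k}\,p\,e^{-\pi|y|^2}$ is sound and genuinely different from the paper's proof. The paper instead pairs against test functions of the form $g(r)q(\omega)$ with $g\in C_c^\infty(\mathbb R_+)$ and $q$ a spherical harmonic, invokes the \emph{spherical} Bochner identity $\int_{S^{N-1}} q(\omega)e^{-i\nu\langle\omega,\eta\rangle}\,d\omega = (2\pi)^{N/2} i^{-l}\nu^{1-N/2} J_{l+N/2-1}(\nu)\,q(\eta)$, and then applies the Hankel transform formula $\int_0^\infty x^\mu J_\nu(x)\,dx = 2^\mu\Gamma(\tfrac{1+\nu+\mu}{2})/\Gamma(\tfrac{1+\nu-\mu}{2})$. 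Your Gaussian route avoids Bessel functions entirely and is arguably slicker; the paper's route is more explicit about the distributional pairing and the Fubini justification in the strip $-N<\operatorname{Re}\lambda<-\tfrac{N+1}{2}$.

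That said, you have a consistent sign slip: since $p_\lambda(x)=r^\lambda p(\omega)=r^{\lambda-k}p(x)$ (not $r^{\lambda+k}$), the Mellin exponent must be $t^{(k-\lambda)/2}$ with $c(\lambda,k)^{-1}=\pi^{-(k-\lambda)/2}\Gamma\bigl(\tfrac{k-\lambda}{2}\bigr)$, valid for $\operatorname{Re}\lambda<k$. With this correction your computation lands \emph{exactly} on $B_N(\lambda,k)$, so the workaround of normalising the constant by testing at $k=0$ is unnecessary. The two Mellin integrals converge simultaneously in the non-empty strip $-k-N<\operatorname{Re}\lambda<k$, and the interchange of $\mathcal F$ with $\int_0^\infty\cdots\,\tfrac{dt}{t}$ is justified there by pairing against $\varphi\in\mathcal S(\mathbb R^N)$ and Fubini, after which meromorphic continuation (Lemma~\ref{lem:plmd}) gives the identity for all $\lambda$.
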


\begin{example}\label{ex:Fdelta}
Since \eqref{eqn:Fplmd} is an identity for meromorphic
distributions, we can pass to the limit, or compute residues at
special values whenever it makes sense. For instance, let $k=0$.
Then, by \eqref{eqn:resr},
 the special value of \eqref{eqn:Fplmd} at
$\lambda=0$ yields
\[
\mathcal{F}(\mathbf{1})
= \lim_{\lambda\to0} B_N(\lambda,0) r^{-\lambda-N}
= \delta(y).
\]
In view of the identity
\(
B_N(\lambda,0) B_N(-\lambda-N,0) = 1,
\)
the residue of \eqref{eqn:Fplmd} at $\lambda=-N$ yields
\[
\mathcal{F}(\delta(x)) = \mathbf{1}.
\]
This, of course, is in agreement with the inversion formula for the
Fourier transform.
\end{example}

\begin{proof}[Proof of Lemma \ref{lem:Boch}]
For $N=1$, $k$ equals either $0$ or $1$, and correspondingly,
$p_\lambda$ is a scalar multiple of $|x|^\lambda$ or $|x|^\lambda
\operatorname{sgn} x$, respectively. Hence, Lemma \ref{lem:Boch} in
the case $N=1$ is equivalent to Example \ref{ex:F} 2).

Let us prove \eqref{eqn:Fplmd} for $N \ge 2$ as the identity of
distributions on $\mathbb{R}^N$.
We shall first prove the identity \eqref{eqn:Fplmd} on
$\mathbb{R}^N \setminus \{0\}$ in the non-empty domain:
\begin{equation}\label{eqn:lmdstrip}
-N < \operatorname{Re} \lambda < -\frac{1}{2}(N+1).
\end{equation}
Since the both sides of \eqref{eqn:Fplmd} are homogeneous
distributions of the same degree, this will imply that the identity
\eqref{eqn:Fplmd} holds on $\mathbb{R}^N$ by Lemma \ref{lem:homo}.
Further, since the both sides of \eqref{eqn:Fplmd} depend
meromorphically on $\lambda$ by Lemma \ref{lem:plmd}, the identity
\eqref{eqn:Fplmd} holds for all $\lambda$ in the sense of
distributions that depend meromorphically on $\lambda$.

The rest of this proof is devoted to show \eqref{eqn:Fplmd} on
$\mathbb{R}^N \setminus \{0\}$ in the domain \eqref{eqn:lmdstrip}.
For this, it is sufficient to prove that
\[
\langle \mathcal{F}p_\lambda, gq \rangle
= B_N(\lambda,k) \langle p_{-\lambda-N}, gq \rangle,
\]
for any compactly supported function $g\in C_c^\infty(\mathbb{R}_+)$
and any $q \in \mathcal{H}^l(\mathbb{R}^N)$ $(l \in \mathbb{N})$
because the linear spans of such functions form a dense subspace in
$C_c^\infty(\mathbb{R} \setminus \{0\})$. Here, $gq$ stands for a
function on $\mathbb{R}^N \setminus \{0\}$ defined by
\[
(gq)(s\eta) = g(s)q(\eta)
\quad
(s>0, \eta \in S^{N-1}).
\]

By definition of the Fourier transform on
$\mathcal{S}'(\mathbb{R}^N)$, $\langle
\mathcal{F}{p_\lambda},gq\rangle=\langle
p_\lambda,\mathcal{F}(gq)\rangle$. Hence, what we need to prove is
\begin{equation}\label{eqn:Fpq}
\langle p_\lambda,\mathcal{F}(gq) \rangle
= B_N (\lambda,k) \langle p_{-\lambda-N}, gq \rangle.
\end{equation}
We note that both $p_\lambda$ and $p_{-\lambda-N}$ are locally
integrable functions on $\mathbb{R}^N$ under the assumption
\eqref{eqn:lmdstrip}.
To calculate the left-hand side of \eqref{eqn:Fpq},
we use the  Bochner identity for
$q \in \mathcal{H}^l (\mathbb{R}^N)$:
\[
\int_{S^{N-1}} q(\omega) e^{-i\nu \langle \omega,\eta\rangle} d\omega
= (2\pi)^{\frac{N}{2}} i^{-l} \nu^{1-\frac{N}{2}}
  J_{l+\frac{N}{2}-1} (\nu) q(\eta),
\]
where $J_\mu(\nu)$ denotes the Bessel function of the first kind.
Then, we get the following formula after a change of variables
$x=2\pi rs$:
\[
\mathcal{F}(gq)(r\omega)
= 2\pi i^{-l} r^{1-\frac{N}{2}} q(\omega)
\int_0^\infty s^{\frac{N}{2}} g(s)
J_{l+\frac{N}{2}-1} (2\pi rs)ds.
\]
Hence, we have
\begin{equation}\label{eqn:Fubini}
\langle p_\lambda, \mathcal{F}(gq) \rangle
= \int_0^\infty \int_{S^{N-1}}
\Bigl(\int_0^\infty I(r,s)ds\Bigr)
p(\omega) q(\omega) d\sigma(\omega) dr,
\end{equation}
where
we set
\[
I(r,s)
:= 2\pi i^{-l} r^{\lambda+\frac{N}{2}} s^{\frac{N}{2}} g(s)
J_{l+\frac{N}{2}-1} (2\pi rs).
\]
At this point,
we prepare the following:
\begin{claim}\label{claim:Irs}
Assume that
$\lambda$ satisfies \eqref{eqn:lmdstrip}
and that $g$ is compactly supported in $\mathbb{R}_+$.
\begin{enumerate}[\upshape 1)]
\item  
$I(r,s) \in L^1 (\mathbb{R}_+ \times \mathbb{R}_+, drds)$.
\item  
$\displaystyle
\int_0^\infty I(r,s) ds
= B_N(\lambda,l) g(s) s^{-\lambda-1}$.
\end{enumerate}
\end{claim}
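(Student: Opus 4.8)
The plan is to deduce both assertions from elementary Bessel asymptotics together with one classical integral evaluation, using the compact support of $g$ to dispose of the $s$-variable and the exact width of the strip \eqref{eqn:lmdstrip} to govern the $r$-variable at $0$ and at $\infty$. Since Lemma \ref{lem:Boch} has already been settled for $N=1$, I may assume $N\ge 2$; write $\nu:=l+\tfrac N2-1$ for the order of the Bessel function occurring in $I(r,s)$, so that $\nu\ge \tfrac N2-1\ge 0$, and fix a compact interval $[a,b]\subset\mathbb R_+$ with $\operatorname{supp}g\subset[a,b]$, on which $g$ is bounded. All $s$-integrations below then run over the fixed compact set $[a,b]$.

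For part 1), I would start from the pointwise bound $|I(r,s)|\le C\,\indic_{[a,b]}(s)\,r^{\operatorname{Re}\lambda+\frac N2}\,\bigl|J_\nu(2\pi rs)\bigr|$ and split the $r$-integral at $r=1$. On $0<r\le 1$ the small-argument estimate $|J_\nu(z)|\le C_\nu z^{\nu}$ (valid because $\nu>-1$) gives $|I(r,s)|\le C\,r^{\operatorname{Re}\lambda+N+l-1}$, which is integrable at the origin exactly when $\operatorname{Re}\lambda>-N-l$, in particular under \eqref{eqn:lmdstrip}. On $r\ge 1$ the large-argument estimate $|J_\nu(z)|\le C_\nu z^{-1/2}$ (applied uniformly for $s\in[a,b]$, where $2\pi rs\to\infty$ as $r\to\infty$) gives $|I(r,s)|\le C\,r^{\operatorname{Re}\lambda+\frac N2-\frac12}$, which is integrable at infinity precisely when $\operatorname{Re}\lambda<-\tfrac12(N+1)$, i.e.\ by the other half of \eqref{eqn:lmdstrip}. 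Multiplying by the finite measure of $[a,b]$ and invoking Tonelli yields $I\in L^1(\mathbb R_+\times\mathbb R_+)$.

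For part 2), I would fix $s$ (the identity being trivial where $g$ vanishes), extract the $s$-dependent prefactors, and substitute $t=2\pi rs$ to obtain
\[
\int_0^\infty I(r,s)\,dr
= 2\pi i^{-l}\,s^{\frac N2}g(s)\,(2\pi s)^{-\lambda-\frac N2-1}\int_0^\infty t^{\lambda+\frac N2}J_\nu(t)\,dt .
\]
The remaining integral is the classical Weber--Schafheitlin (Hankel) formula $\int_0^\infty t^{\mu}J_\nu(t)\,dt=2^{\mu}\,\Gamma\!\bigl(\tfrac{\nu+\mu+1}{2}\bigr)\big/\Gamma\!\bigl(\tfrac{\nu-\mu+1}{2}\bigr)$, which converges absolutely for $-\operatorname{Re}\nu-1<\operatorname{Re}\mu<\tfrac12$; with $\mu=\lambda+\tfrac N2$ and $\nu=l+\tfrac N2-1$ this is the condition $-N-l<\operatorname{Re}\lambda<-\tfrac12(N-1)$, an open strip containing the region \eqref{eqn:lmdstrip}. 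Substituting these values produces the Gamma ratio $\Gamma\!\bigl(\tfrac{l+N+\lambda}{2}\bigr)\big/\Gamma\!\bigl(\tfrac{l-\lambda}{2}\bigr)$, and collecting the powers of $2$ and $\pi$ collapses the constant (together with $i^{-l}$) to exactly $B_N(\lambda,l)$, while the surviving powers of $s$ combine to $s^{-\lambda-1}$; this is the asserted value of $\int_0^\infty I(r,s)\,dr$ (the integral in part 2) being taken in $r$).

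The only genuine obstacle is aligning the convergence bookkeeping with \eqref{eqn:lmdstrip}: the lower endpoint $-N$ is forced by integrability of $r^{\operatorname{Re}\lambda+N+l-1}$ at the origin in the worst case $l=0$, while the upper endpoint $-\tfrac12(N+1)$ is forced by the oscillatory decay $|J_\nu(z)|\sim z^{-1/2}$ at infinity, which makes $r^{\operatorname{Re}\lambda+\frac N2}|J_\nu(2\pi rs)|$ absolutely integrable there only when $\operatorname{Re}\lambda<-\tfrac12(N+1)$ — this is exactly why the strip is needed for the Tonelli argument in part 1), and why \eqref{eqn:lmdstrip} is precisely the set of $\lambda$ for which part 1) holds simultaneously for every $l\in\mathbb N$. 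Beyond Tonelli and the Weber--Schafheitlin evaluation, everything is routine manipulation of Gamma functions.
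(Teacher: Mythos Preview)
Your proof is correct and follows essentially the same route as the paper: both parts rest on the small- and large-argument bounds for $J_\nu$ (using the compact support of $g$ to make them uniform in $s$) together with the classical Hankel/Weber--Schafheitlin evaluation $\int_0^\infty t^\mu J_\nu(t)\,dt = 2^\mu\,\Gamma(\tfrac{1+\nu+\mu}{2})/\Gamma(\tfrac{1+\nu-\mu}{2})$. One minor slip: the Weber--Schafheitlin integral converges \emph{absolutely} only for $\operatorname{Re}\mu<-\tfrac12$ (not $\tfrac12$), but since the strip \eqref{eqn:lmdstrip} already forces $\operatorname{Re}\mu=\operatorname{Re}\lambda+\tfrac N2<-\tfrac12$, this does not affect your argument.
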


\begin{proof}[Proof of Claim \ref{claim:Irs}]
1)
Since the support of $g$ is away from $0$ and $\infty$,
it follows from the asymptotic behaviour of the Bessel function
$J_\mu(z)$ as $z\to 0$ and $z\to \infty$ that
there exists a constant $c>0$ such that
\[
|I(r,s)| \le
\begin{cases}
   c \ r^{\operatorname{Re}\lambda+N+l-1}
   &\text{as $r \to 0$},
\\
   c \ r^{\operatorname{Re}\lambda+\frac{1}{2}(N-1)}
   &\text{as $r \to \infty$}.
\end{cases}
\]
By the assumption
$-N < \operatorname{Re}\lambda < -\frac{1}{2}(N+1)$,
we conclude $I(r,s)$ is an integrable function on
$\mathbb{R}_+ \times \mathbb{R}_+$.

2)
This is a direct consequence of the
 following classical formula of the Hankel transform
\cite[6.561.14]{GR}
\[
\int_0^\infty x^\mu J_\nu(x) dx
= 2^\mu \frac{\Gamma(\frac{1+\nu+\mu}{2})}{\Gamma(\frac{1+\nu-\mu}{2})},
\]
for $\operatorname{Re}(\mu+\nu) > -1$ and
 $\operatorname{Re}\mu < -\frac{1}{2}$.
\end{proof}

Returning to the proof of Lemma \ref{lem:Boch},
we can now apply Fubini's theorem for
the right-hand side of \eqref{eqn:Fubini}
to get
\begin{align*}
\langle p_\lambda, \mathcal{F}(gq) \rangle
&=
\Bigl(\int_{S^{N-1}} p(\omega)q(\omega)d\sigma(\omega)\Bigr)
\int_0^\infty \Bigl(\int_0^\infty I(r,s)ds\Bigr) dr
\\
&=
B_N(\lambda, l)
\int_{S^{N-1}} p(\omega) q(\omega)d\sigma(\omega)
\int_0^\infty g(s) s^{-\lambda-1} ds.
\\
\intertext{We recall that $ p\in\mathcal H^k(\mathbb R^N)$ and $q\in\mathcal H^l(\mathbb R^N)$,
therefore the first factor is non-zero only if $k=l$.
Then the right-hand side equals}
&=
B_N(\lambda,k)
\langle p_{-\lambda-N}, gq \rangle.
\end{align*}
Hence \eqref{eqn:Fpq} is proved in the non-empty open domain of
$\lambda$ satisfying the inequality \eqref{eqn:lmdstrip}. Therefore,
the proof of Lemma \ref{lem:Boch} is completed.
\end{proof}

\subsection{Fourier transform of homogeneous functions}\label{sec:Frest}

We consider the restriction of the
Fourier transform $\mathcal{F}$ on $\mathbb{R}^N$ to the space of
homogeneous functions.
For $\mu \in \mathbb{C}$,
we set
\begin{eqnarray}\label{eqn:Vmu}
V_\mu&\equiv& V_\mu(\mathbb R^N)\\
&:=& \{ f\in C^\infty(\mathbb{R}^N \setminus \{0\}): f(tX)
= |t|^{-\mu-\frac{N}{2}} f(X)
\,\text{for any $t \in \mathbb{R} \setminus \{0\}$} \}.\nonumber
\end{eqnarray}
Then, $V_\mu$ may be regarded as a subspace of the space
$\mathcal{S}'(\mathbb{R}^N)$ of tempered distributions for
$\mu\neq\frac N2,\frac N2+2,\dotsc$.
We note that $f \in V_\mu$ is determined by the restriction
$f|_{S^{N-1}}$,
and thus $V_\mu$ can be identified with the space of smooth even
functions on $S^{N-1}$.
In this subsection,
we will prove:
\begin{proposition}\label{prop:FVV}
Suppose $|\mu| \ne \frac{N}{2}, \frac{N}{2}+2, \dotsc$.
Then
the Fourier transform $\mathcal{F}:\mathcal S'(\mathbb R^N) \to \mathcal S'(\mathbb R^N)$ induces a bijection between $V_{-\mu}$ and
$V_\mu$.
\end{proposition}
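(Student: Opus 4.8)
The plan is to reduce the statement to Lemma \ref{lem:Boch} by decomposing the space $V_\mu(\mathbb{R}^N)$ into the spherical-harmonic pieces coming from the restriction to $S^{N-1}$, and tracking the behaviour of $\mathcal{F}$ on each piece. First I would observe that, under the identification of $V_\mu$ with the space of smooth \emph{even} functions on $S^{N-1}$ (valid since $\mu\ne\frac N2,\frac N2+2,\dotsc$, so that the homogeneous extension defines a tempered distribution with no pole, by Lemma \ref{lem:plmd}), we have a linear isomorphism $V_\mu \cong \bigoplus_{k\ \mathrm{even}} \mathcal{H}^k(\mathbb{R}^N)$ at the level of the dense subspace $\mathcal{H}(\mathbb{R}^N)$, and more precisely a bijection given by $p \mapsto p_{-\mu-N/2}$ for $p\in\mathcal{H}^k(\mathbb{R}^N)$, $k$ even. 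The key point is that a homogeneous distribution of degree $-\mu-\frac N2$ lies in $V_\mu$ precisely when it is \emph{even}, i.e.\ a sum of pieces $p_{-\mu-N/2}$ with $p$ of even degree; odd-degree harmonics give odd homogeneous functions, which are not in $V_\mu$ as defined.

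Next I would invoke Lemma \ref{lem:Boch}: for $p\in\mathcal{H}^k(\mathbb{R}^N)$, setting $\lambda=-\mu-\frac N2$ we get
\[
\mathcal{F} p_{-\mu-N/2}
= B_N\!\left(-\mu-\tfrac N2,\,k\right)\, p_{\mu-N/2}.
\]
Since $p_{\mu-N/2} \in V_{-\mu}$ when $k$ is even and $\mu\ne -\frac N2,-\frac N2-2,\dotsc$ (hence $V_{-\mu}$ makes sense, and again the homogeneous extension has no pole), this shows $\mathcal{F}$ maps the $\mathcal{H}^k$-component of $V_\mu$ into the $\mathcal{H}^k$-component of $V_{-\mu}$. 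The scalar $B_N(-\mu-\frac N2,k)$ is a nonzero meromorphic function whose poles and zeros I would check do not meet the admissible range $|\mu|\ne\frac N2,\frac N2+2,\dotsc$: its value is
\[
B_N\!\left(-\mu-\tfrac N2,k\right)
= \pi^{\mu-N/2}\, i^{-k}\,
  \frac{\Gamma\!\left(\frac{k-\mu}{2}\right)}{\Gamma\!\left(\frac{k+\mu+N}{2}\right)},
\]
and under the hypothesis on $\mu$ neither the $\Gamma$ in the numerator has a pole (that would force $\mu\in\{k,k+2,\dotsc\}$, impossible for even $k$ when $\mu\ne\frac N2+2j$... — here one must be slightly careful and use that $k$ even together with the excluded half-integer values rules this out, or rather one notes $\mu\ne N/2+2j$ is what's needed) nor does the denominator $\Gamma$ vanish; so $B_N(-\mu-\frac N2,k)\neq 0$ for every even $k$. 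Applying the same computation with $\mu$ replaced by $-\mu$ gives the inverse scalar on each component, and indeed the identity $B_N(\lambda,k)\,B_N(-\lambda-N,k)=1$ (the higher-$k$ analogue of the relation used in Example \ref{ex:Fdelta}) shows the two maps are mutually inverse on $\mathcal{H}(\mathbb{R}^N)$.

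Finally I would upgrade the statement from the dense subspace $\mathcal{H}(\mathbb{R}^N)$ to all of $V_\mu$. Since $\mathcal{F}$ is already defined as a continuous bijection on $\mathcal{S}'(\mathbb{R}^N)$, and since every $f\in V_\mu$ is a tempered distribution whose spherical-harmonic expansion of $f|_{S^{N-1}}$ converges in $C^\infty(S^{N-1})$ (hence the homogeneous extensions converge in $\mathcal{S}'$), the componentwise formula $\mathcal{F}p_{-\mu-N/2}=B_N(-\mu-N/2,k)p_{\mu-N/2}$ determines $\mathcal{F}f$ as an element of $V_{-\mu}$: its restriction to the sphere is the even smooth function with the rescaled harmonic coefficients. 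Bijectivity onto $V_{-\mu}$ then follows because the scalars $B_N(-\mu-N/2,k)$ are all nonzero, so every even smooth function on $S^{N-1}$ is hit, with preimage obtained by dividing the coefficients. The main obstacle I anticipate is the bookkeeping in this last step — namely justifying that the termwise action of $\mathcal{F}$ on the (generally infinite) harmonic expansion genuinely computes $\mathcal{F}f$ in $\mathcal{S}'$, which requires either a convergence/continuity argument for the series of homogeneous distributions or an appeal to the fact that $\mathcal{F}$ commutes with the $SO(N)$-action and with the Euler operator, pinning down $\mathcal{F}f$ on each isotypic component; the scalar-nonvanishing verification is routine once the excluded set of $\mu$ is respected.
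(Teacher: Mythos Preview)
Your overall strategy --- decompose $V_{\mu}$ via even spherical harmonics and apply Lemma~\ref{lem:Boch} on each piece --- is exactly the paper's approach. But there is a genuine gap, and it is not the one you flag.

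The sentence ``its restriction to the sphere is the even smooth function with the rescaled harmonic coefficients'' is precisely the assertion that needs proof, and your proposal does not supply it. Knowing that $\mathcal{F}f$ is a homogeneous tempered distribution whose isotypic components are $B_N(-\mu-\tfrac N2,k)\,p_{\mu-N/2}$ does \emph{not} by itself tell you that the resulting series defines a $C^\infty$ function on $S^{N-1}$; a priori it is only a distribution there. What is missing is a growth estimate: one must check that $|B_N(-\mu-\tfrac N2,k)|$ grows at most polynomially in $k$ (equivalently, in the Laplace eigenvalue). The paper isolates exactly this point as Lemma~\ref{lem:ACD}, whose input is the polynomial bound on the multipliers coming from Stirling's formula applied to the ratio of Gamma functions in $B_N$. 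Your alternative suggestion --- that $\mathcal{F}$ commutes with $SO(N)$ and the Euler operator --- pins down the isotypic components but again says nothing about smoothness.

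Two smaller points. Your explicit formula for $B_N(-\mu-\tfrac N2,k)$ is miscomputed: with $\lambda=-\mu-\tfrac N2$ one gets $\pi^{\mu}\,i^{-k}\,\Gamma\bigl(\tfrac{2k-2\mu+N}{4}\bigr)\big/\Gamma\bigl(\tfrac{2k+2\mu+N}{4}\bigr)$, not the expression you wrote; once corrected, the nonvanishing analysis under $\mu\notin\{\pm(\tfrac N2+2j)\}$ becomes clean. And for bijectivity you need not invoke the identity $B_N(\lambda,k)B_N(-\lambda-N,k)=1$ componentwise: since elements of $V_{\pm\mu}$ are even, $\mathcal{F}^2$ is already the identity on them, so once you have shown $\mathcal{F}(V_{-\mu})\subset V_\mu$ and $\mathcal{F}(V_\mu)\subset V_{-\mu}$ (the same smoothness argument with $\mu\mapsto-\mu$), bijectivity is immediate.
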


For the proof of this proposition,
we prepare some general result as follows.
Let $M$ be a compact smooth Riemannian manifold.
We write $\Delta_M$ for the Laplace--Beltrami operator,
$\operatorname{Ker}(\Delta_M-\lambda)$ for the eigenspace
$\{ f \in C^\infty(M) : \Delta_M f = \lambda f \}$,
and $\tau$ for the Riemannian volume element.
Then we can regard $C^\infty(M)$ as a
subspace of $\mathcal{D}'(M)$,
the space of distributions,
by $f(x) \mapsto f(x) d\tau(x)$.

\begin{lemma}\label{lem:ACD}
Suppose
$A : C^\infty(M) \to \mathcal{D}'(M)$
is a linear map satisfying the following two properties:

\begin{equation}
\label{eqn:ACD1}
\hbox{$A$ acts as a scalar,
say $a(\lambda) \in \mathbb{C}$,
on each eigenspace
$\operatorname{Ker}(\Delta_M-\lambda)$.}
\end{equation}
\begin{align}
\label{eqn:ACD2}
&\hbox{
$a(\lambda)$ is at most of polynomial growth, namely,}
\notag
\\
&\hbox{
there exist $C, N > 0$ such that
$|a(\lambda)| \le C(1+|\lambda|)^N$.}
\end{align}

Then, $A\psi \in C^\infty(M)$ for any $\psi \in C^\infty(M)$.
\end{lemma}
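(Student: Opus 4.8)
The plan is to expand $\psi\in C^\infty(M)$ in the eigenbasis of $\Delta_M$ and to show that $A\psi$, defined by applying the scalars $a(\lambda)$ term by term, has rapidly decaying ``Fourier coefficients'' and hence is smooth. First I would recall the spectral theory of $\Delta_M$ on a compact Riemannian manifold: the operator $-\Delta_M$ is essentially self-adjoint and has a discrete spectrum $0=\lambda_0<\lambda_1\le\lambda_2\le\cdots\to\infty$ (with the sign convention making $-\Delta_M$ nonnegative), with an orthonormal basis $\{\phi_j\}_{j\ge0}$ of $L^2(M,\tau)$ consisting of smooth eigenfunctions, $\Delta_M\phi_j=\mu_j\phi_j$. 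Weyl's law gives $|\mu_j|\lesssim (1+j)^{2/\dim M}$, in particular the eigenvalues grow polynomially in $j$. The key elliptic-regularity fact I would invoke is the Sobolev-type characterization of smoothness: a distribution $u=\sum_j c_j\phi_j$ on $M$ lies in $C^\infty(M)$ if and only if for every $m\in\mathbb{N}$ the sequence $(1+|\mu_j|)^m c_j$ is bounded (equivalently, square-summable); this follows from the fact that $\bigcap_m \operatorname{Dom}((1-\Delta_M)^m)$, the Sobolev space $H^\infty(M)$, equals $C^\infty(M)$ on a compact manifold.

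Given this, the argument is short. Let $\psi=\sum_j c_j\phi_j$ be the eigenfunction expansion; since $\psi\in C^\infty(M)$, the coefficients satisfy $|c_j|\le C_m(1+|\mu_j|)^{-m}$ for every $m$. By hypothesis \eqref{eqn:ACD1}, $A\phi_j = a(\mu_j)\phi_j$, so formally $A\psi=\sum_j a(\mu_j)c_j\phi_j$; one should note that this sum converges in $\mathcal{D}'(M)$ because $A$ is linear and continuity on $C^\infty(M)$ is not yet assumed — instead I would argue directly that the right-hand side converges in $C^\infty(M)$, which is a stronger statement. By hypothesis \eqref{eqn:ACD2}, $|a(\mu_j)|\le C(1+|\mu_j|)^N$. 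Hence for any $m$,
\[
(1+|\mu_j|)^m\,|a(\mu_j)c_j|
\le C\,(1+|\mu_j|)^{m+N}\,|c_j|
\le C\,C_{m+N+\dim M},
\]
using the decay of $c_j$ with exponent $m+N+\dim M$; in fact choosing the exponent a bit larger makes $\sum_j (1+|\mu_j|)^m|a(\mu_j)c_j|^2<\infty$ by Weyl's law. Thus the coefficients of $A\psi$ decay faster than any polynomial, so $A\psi\in H^\infty(M)=C^\infty(M)$, and the series converges in every Sobolev norm, hence in $C^\infty(M)$.

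The main obstacle — really the only subtle point — is making rigorous the passage from ``$A$ acts as the scalar $a(\lambda)$ on each eigenspace'' to ``$A\psi=\sum_j a(\mu_j)c_j\phi_j$ as distributions.'' A priori $A$ is only assumed linear, not continuous, so one cannot simply exchange $A$ with the infinite sum. The clean way around this is: the partial sums $\psi_K=\sum_{j\le K}c_j\phi_j$ converge to $\psi$ in $C^\infty(M)$, but we cannot conclude $A\psi_K\to A\psi$ without continuity. Instead, I would observe that $\psi-\psi_K$ itself lies in $C^\infty(M)$ and, being orthogonal to $\phi_0,\dots,\phi_K$, can for any fixed $m$ be written as $(1-\Delta_M)^{-m}\eta_K$ for some $\eta_K\in C^\infty(M)$ with small $L^2$-norm; pairing $A(\psi-\psi_K)$ against a test function and using that $A$ commutes with the spectral projections (a consequence of \eqref{eqn:ACD1}) lets one estimate it and show it tends to $0$ in $\mathcal{D}'(M)$. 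An alternative, perhaps cleaner, route is to define $B\psi:=\sum_j a(\mu_j)c_j\phi_j$ directly (which, by the estimate above, is a well-defined continuous operator $C^\infty(M)\to C^\infty(M)$) and then check $A=B$ by testing on finite linear combinations of eigenfunctions, which are dense in $C^\infty(M)$, together with the observation that both $A$ and $B$ respect the eigenspace decomposition — though here too one must be careful that density plus agreement on a dense set needs continuity of at least one side, which $B$ supplies. Either way, once this bookkeeping is settled the polynomial-growth hypothesis does exactly the job of absorbing the finitely many lost powers of $(1+|\mu_j|)$, and smoothness of $A\psi$ follows.
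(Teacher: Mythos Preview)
Your approach is essentially the paper's own: expand in the eigenbasis of $\Delta_M$, characterize $C^\infty(M)$ by the condition that $\Delta_M^l T\in L^2(M)$ for all $l$ (equivalently, rapid decay of the eigenfunction coefficients), and absorb the polynomial factor $|a(\lambda_j)|\le C(1+|\lambda_j|)^N$ into that decay. The continuity issue you flag in your last paragraph is real and is glossed over in the paper as well --- it simply writes $A\psi=\sum_j b_j a(\lambda_j)\varphi_j$ without comment; in the intended application $A$ is visibly continuous so the point is moot, but as a standalone lemma some continuity (or commutation with spectral projections on all of $C^\infty(M)$) is genuinely needed, and your proposed workarounds cannot succeed without it, since a purely linear $A$ satisfying \eqref{eqn:ACD1} only on eigenspaces is unconstrained off their algebraic span.
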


\begin{remark}\label{rem:ACD}
For a real analytic manifold $M$,
an analogous statement holds for a linear map
$A : \mathcal{A}(M) \to \mathcal{B}(M)$,
where $\mathcal{A}$, $\mathcal{B}$ denote the sheaf of real analytic functions,
(Sato's) hyperfunctions, respectively, if $a(\lambda)$ is at most of
infra exponential growth (see \cite[Section 2.3]{xAnn}).
\end{remark}

\begin{proof}[Proof of Lemma \ref{lem:ACD}]
By Sobolev's lemma, a distribution $T$ on $M$ belongs to
$C^\infty(M)$ if and only if $\Delta_M^l T$ (in the sense of
distributions) belongs to $L^2(M)$ for any $l \in \mathbb{N}$. We
will show that this is the case for $T = A \psi$ if $\psi \in
C^\infty(M)$.

Let $0=\lambda_0 > \lambda_1\geq \lambda_2\geq \dots$ the (negative) eigenvalues of $\Delta_M$,
 repeated according to their multiplicites.
We take an orthonormal basis $\{\varphi_j : j= 0,1,2,\dots\}$ in $L^2(M)$
 consisting of real-valued eigenfunctions of $\Delta_M$ with eigenvalues $\lambda_j$.
Then any distribution $T$ on $M$ can be expanded into a series of eigenfunctions (as a distribution):
\[
     T=\sum_j c_j \varphi_j, \quad c_j : = \langle T, \varphi_j\rangle\ .
\]
For $l\in \mathbb N$, the condition $\Delta_M^l T \in L^ 2(M)$  amounts to
\[
      \sum_j \vert c_j\vert^2 \lambda_j^{2l} < \infty \ ,
\]
so that
\[
      T \in C^\infty(M) \Longleftrightarrow \sum_j \vert c_j\vert^2(1+\vert \lambda_j\vert)^{2l}<\infty \quad {\rm for\ any\ } l \in \mathbb N\ .
\]
Take any $\psi \in C^\infty(M)$, and expand it into a series of eigenfunctions
\[
       \psi = \sum_j b_j \varphi_j, \quad b_j = \langle \psi, \varphi_j\rangle_{L^2(M)}\ .
\]

Applying the operator $A$, we get from \eqref{eqn:ACD1}
\[ A\psi = \sum_j b_j a(\lambda_j) \varphi_j\ .
\]
For any $l\in \mathbb N$, using \eqref{eqn:ACD2}
\[\sum_j \vert b_j a(\lambda_j)\vert^2 \lambda_j^{2l}\leq C \sum_j
\vert b_j\vert^2 (1+\vert \lambda_j\vert)^{2N} \lambda_j^{2l}\leq C \sum_j \vert b_j\vert^2 (1+\vert \lambda_j)\vert^{2N+2l} <+\infty\ .\]
Thus $A\psi \in C^\infty(M)$.
Hence Lemma \ref{lem:ACD} is proved.
\end{proof}

We are ready to complete the proof of Proposition \ref{prop:FVV}.
\begin{proof}[Proof of Proposition \ref{prop:FVV}]
For $f \in V_{-\mu}$,
$\mathcal{F}f$ is a homogeneous distribution of degree
$-\mu - \frac{N}{2}$.
Therefore, to see that
$\mathcal{F}f \in C^\infty (\mathbb{R}^N \setminus \{0\})$,
it is sufficient to show that the restriction
$\mathcal{F}f |_{S^{N-1}}$
is a smooth function on $S^{N-1}$.
This follows from the general result
(see Lemma \ref{lem:ACD}) together with Lemma \ref{lem:Boch} and
Stirling's formula on the asymptotic behaviour of the Gamma function.
Hence, Proposition \ref{prop:FVV} is proved.
\end{proof}

\subsection{Operator $\mathcal{T}_\mu$ and Symplectic Fourier transform
$\mathcal{F}_J$}
\label{subsec:2.4}

The key idea to find eigenvalues of the integral transform
$\mathcal{T}_\mu$ on $L^2(S^{2n-1})$ is to interpret it as the
restriction of the \textit{symplectic Fourier transform}, to be
denoted by $\mathcal{F}_J$,
 on the ambient space
$\mathbb{R}^{2n}$.

If $\operatorname{Re} \mu < -\frac{N}{2}+1$,
the following integral converges absolutely for any
$h \in C^\infty(S^{N-1})$:
\begin{equation}\label{eqn:Qmu}
(\mathcal{Q}_\mu h)(\eta)
:= \int_{S^{N-1}} |\langle\omega,\eta\rangle|^{-\mu-\frac{N}{2}}
   h(\omega)d\omega.
\end{equation}
Then $Q_\mu h$ extends meromorphically on $\mu \in \mathbb{C}$,
whose poles are simple and contained in the set
$\{ 1-\frac{N}{2}, 3-\frac{N}{2}, 5-\frac{N}{2}, \dotsc \}$.
Thus,
we get a family of linear operators
that depend meromorphically on $\mu\in\mathbb C$ by
\[
\mathcal{Q}_\mu :
C^\infty(S^{N-1}) \to C^\infty(S^{N-1}).
\]
We may regard $Q_\mu h$ as an even homogeneous function on
$\mathbb{R}^N \setminus \{0\}$ of degree $-\mu-\frac{N}{2}$ by simply
letting $\eta$ be a variable in $\mathbb{R}^N \setminus \{0\}$.
Then,
$Q_\mu h \in V_\mu$.
By Proposition \ref{prop:FVV},
 the Fourier transform
$\mathcal{F}$ gives a bijection between $V_{-\mu}$ and $V_{\mu}$
for $|\mu| \ne \frac{N}{2}, \frac{N}{2}+2, \dotsc$.
On
the other hand, $V_\mu$ can be identified with the space of smooth
even functions on $S^{N-1}$. We notice that the latter space is
independent of $\mu$. Thus, we have the following diagram:
\[
\begin{array}{rcccc}
   \mathcal{F} & :
      &\mathcal{S}'(\mathbb{R}^N) & \stackrel{\sim}{\longrightarrow} & \mathcal{S}'(\mathbb{R}^N)
\\[1ex]
      && \cup   & \rotatebox[origin=c]{180}{$\circlearrowright$} & \cup
\\[1ex]
      && V_{-\mu}  & \stackrel{\sim}{\longrightarrow} & V_{\mu}
\\[1ex]
      && \cap   && \cap
\\[1ex]
   \mathcal{Q}_\mu & :
      & C^\infty(S^{N-1}) & \longrightarrow & C^\infty(S^{N-1})
\end{array}
\]
The lower diagram commutes up
to a scalar constant.
To make a precise statement,
we define
\begin{align}\label{eqn:CNmu}
C_N(\mu) :={}& \frac{(2\pi)^{\mu+\frac{N}{2}}}
                    {\Gamma(\mu+\frac{N}{2})\cos\frac{\pi}{2}(\mu+\frac{N}{2})}
\nonumber
\\
={}&
\frac{2\pi^{\mu+\frac{N-1}{2}} \Gamma(\frac{2-N-2\mu}{4})}
     {\Gamma(\frac{N+2\mu}{4})}.
\end{align}
Then we have:
\begin{proposition}\label{prop:QF} As operators that depend
meromorphically on $\mu$, $\mathcal Q_\mu$ satisfy the following
identity:
\[
\mathcal{Q}_\mu
= C_N(\mu) \mathcal{F} \big|_{V_{-\mu}}.
\]
\end{proposition}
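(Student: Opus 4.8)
The plan is to establish the identity first in a non-empty vertical strip of the parameter $\mu$ in which every integral in sight is absolutely convergent, proving it there as an identity of continuous functions on $S^{N-1}$ by a direct computation, and then to extend it to all $\mu$ by meromorphic continuation, since both $\mathcal Q_\mu$ and $\mathcal F|_{V_{-\mu}}$ are meromorphic families of operators (by the discussion preceding the proposition and by Lemma \ref{lem:plmd}). Because $\mathcal Q_\mu$ kills odd functions and $V_{-\mu}$ consists of even functions, it is enough to treat an even $h\in C^\infty(S^{N-1})$; I identify it with its homogeneous extension $\tilde h\in V_{-\mu}$, a distribution — indeed a locally integrable function — of degree $\mu-\frac N2$. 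The strip I would work in is $-\frac N2<\operatorname{Re}\mu<1-\frac N2$: the left inequality makes $\tilde h$ locally integrable on $\mathbb R^N$, and the right one is precisely the condition for $|\langle\omega,\eta\rangle|^{-\mu-N/2}$ to be integrable on $S^{N-1}$, i.e. for $\mathcal Q_\mu h$ to converge.

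For the computation itself I would pass to polar coordinates $X=r\omega$ in $\mathcal F\tilde h(\eta)$ with $\eta\in S^{N-1}$, which gives $\int_{S^{N-1}}h(\omega)\bigl(\int_0^\infty r^{\mu+N/2-1}e^{-2\pi i r\langle\omega,\eta\rangle}\,dr\bigr)\,d\sigma(\omega)$, and then evaluate the inner one-dimensional integral using exactly the intermediate formula \eqref{eqn:Fexlmd}. Concretely, inserting the Abel factor $e^{-2\pi\varepsilon|X|}$ makes Fubini legitimate and yields $\mathcal F(e^{-2\pi\varepsilon|X|}\tilde h)(\eta)=\dfrac{e^{-i\pi(\mu+N/2)/2}\Gamma(\mu+\frac N2)}{(2\pi)^{\mu+N/2}}\int_{S^{N-1}}h(\omega)(\langle\omega,\eta\rangle-i\varepsilon)^{-\mu-N/2}\,d\sigma(\omega)$; letting $\varepsilon\downarrow0$ and invoking dominated convergence on $S^{N-1}$ identifies $\mathcal F\tilde h$, restricted to $\mathbb R^N\setminus\{0\}$, with the integral of $h(\omega)(\langle\omega,\eta\rangle-i0)^{-\mu-N/2}$. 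Finally I would split $(\langle\omega,\eta\rangle-i0)^{-\mu-N/2}$ via \eqref{eqn:xi0} into its $|\langle\omega,\eta\rangle|^{-\mu-N/2}$ part and its $|\langle\omega,\eta\rangle|^{-\mu-N/2}\operatorname{sgn}\langle\omega,\eta\rangle$ part; the latter is odd under $\omega\mapsto-\omega$, hence integrates to zero against the even $h$, and the surviving term reads $\mathcal F\tilde h(\eta)=\dfrac{\Gamma(\mu+\frac N2)\cos\frac{\pi}{2}(\mu+\frac N2)}{(2\pi)^{\mu+N/2}}(\mathcal Q_\mu h)(\eta)$. This is the asserted identity, the constant being exactly the first expression for $C_N(\mu)$ in \eqref{eqn:CNmu}; the second expression then follows from the reflection and duplication formulas for the Gamma function.

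The hard part is not the algebra but the interchange of the radial and spherical integrations: the radial integral $\int_0^\infty r^{\mu+N/2-1}e^{-2\pi i r s}\,dr$ is only conditionally convergent, so I would carry the regularizing factor $e^{-2\pi\varepsilon|X|}$ all the way through — as in the proof of Example \ref{ex:F} — where everything is absolutely convergent, and remove it only at the end. The real point is then that $\lim_{\varepsilon\downarrow0}$ can be taken inside $\int_{S^{N-1}}$, and this is exactly where the hypothesis $\operatorname{Re}\mu<1-\frac N2$ enters, via the bound $|\langle\omega,\eta\rangle-i\varepsilon|\ge|\langle\omega,\eta\rangle|$ together with $|\langle\omega,\eta\rangle|^{-\operatorname{Re}\mu-N/2}\in L^1(S^{N-1})$; a uniform version of the same bound (with constants independent of $\eta$ on compacta of $\mathbb R^N\setminus\{0\}$, using the rotation invariance of $\int_{S^{N-1}}|\langle\omega,\eta\rangle|^{-a}\,d\sigma$) is what lets one match the pointwise limit on $\mathbb R^N\setminus\{0\}$ with the $\mathcal S'$-limit $\mathcal F\tilde h$. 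Once the identity is known on the strip it propagates to all $\mu$ by meromorphy, and for odd $h$ it is trivial since both sides vanish.
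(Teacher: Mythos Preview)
Your argument is correct and follows essentially the same route as the paper's own proof: Abel regularization $e^{-2\pi\varepsilon|X|}$, polar coordinates, the one-dimensional formula \eqref{eqn:Fexlmd}, passage to the limit yielding $(\langle\omega,\eta\rangle-i0)^{-\mu-N/2}$, and the parity splitting \eqref{eqn:xi0} to kill the odd part. The only notable difference is that the paper works merely under $\operatorname{Re}\mu>-\tfrac N2$ and treats the limit $\varepsilon\downarrow0$ in the $\mathcal S'$ sense, whereas you impose the additional bound $\operatorname{Re}\mu<1-\tfrac N2$ to make the spherical integral absolutely convergent and justify dominated convergence pointwise; this is a harmless and arguably cleaner way to pin down the constant.
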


\begin{proof}
Any element in $V_{-\mu}$ is of the form
\[
h_{\mu-\frac{N}{2}}(r\omega)
= r^{\mu-\frac{N}{2}} h(\omega)
\quad
(r>0, \ \omega\in S^{N-1}),
\]
for some $h \in C^\infty(S^{N-1})$ which is an even function, i.e.,
$h(\omega) = h(-\omega)$.

We shall prove
\begin{equation}\label{eqn:QhF}
Q_\mu h_{\mu-\frac{N}{2}}
= C_N(\mu) \mathcal{F} h_{\mu-\frac{N}{2}}
\end{equation}
as distributions on $\mathbb{R}^N \setminus \{0\}$.
For each fixed $h$,
the both sides of \eqref{eqn:QhF} are distributions that depend
meromorphically on $\mu$.
Therefore, it is sufficient to prove \eqref{eqn:QhF} for some
non-empty open
domain in $\mu$, say,
\begin{equation}\label{eqn:muN2}
\operatorname{Re}\mu > -\frac{N}{2}.
\end{equation}
The inequality \eqref{eqn:muN2} implies that $h_{\mu-\frac{N}{2}}
\in L_{loc}^1 (\mathbb{R}^N)$, and we have
\[
\lim_{\varepsilon\downarrow0} e^{-2\pi\varepsilon r}
h_{\mu-\frac{N}{2}} (r\omega)
= h_{\mu-\frac{N}{2}} (r\omega)
\]
as a locally integrable function,
and also in $\mathcal{S}'(\mathbb{R}^N)$.
Hence, taking the Fourier transform, we get
\[
\lim_{\varepsilon\downarrow0} \mathcal{F} (e^{-2\pi\varepsilon r}
h_{\mu-\frac{N}{2}})
= \mathcal{F} h_{\mu-\frac{N}{2}}
\]
in $\mathcal{S}'(\mathbb{R}^N)$.

Let us compute $\mathcal{F}(e^{-2\pi\varepsilon r}
h_{\mu-\frac{N}{2}})$. Below, we use the Fourier transform for both
$\mathbb{R}^N$ and $\mathbb{R}$, which will be denoted by
$\mathcal{F}_{\mathbb{R}^N}$ and $\mathcal{F}_{\mathbb{R}}$ to avoid
confusion. We note that $e^{-2\pi\varepsilon r} h_{\mu-\frac{N}{2}}
\in L^1(\mathbb{R}^N)$ if $\varepsilon>0$ and $\mu$ satisfies
\eqref{eqn:muN2}. Let $s>0$ and $\eta\in S^{N-1}$. Then the Fourier
transform can be computed by the Lebesgue integral:
\begin{align*}
&\mathcal{F}_{\mathbb{R}^N} (e^{-2\pi\varepsilon r} h_{\mu-\frac{N}{2}})
   (s\eta)
\\
={}&
\int_{S^{N-1}} \int_0^\infty
e^{-2\pi\varepsilon r} r^{\mu+\frac{N}{2}-1}
e^{-2\pi irs\langle\omega,\eta\rangle} dr d\sigma(\omega)
\\
={}&
\int_{S^{N-1}} \mathcal{F}_{\mathbb{R}}
(r_+^{\mu+\frac{N}{2}-1})
(s \langle\omega,\eta\rangle - i\varepsilon) d\sigma(\omega)
\\
={}&
\frac{\Gamma(\mu+\frac{N}{2}) e^{-\frac{\pi i}{2}(\mu+\frac{N}{2})}}
     {(2\pi)^{\mu+\frac{N}{2}}}
\int_{S^{N-1}} (s\langle\omega,\eta\rangle - i\varepsilon)^{-\mu-\frac{N}{2}}
h(\omega) d\sigma(\omega).
\end{align*}
Taking the limit as $\varepsilon\to0$,
we get
\begin{equation*}
\mathcal{F}_{\mathbb{R}^N} h_{\mu-\frac{N}{2}} (s\eta)
=
\frac{\Gamma(\mu+\frac{N}{2}) e^{-\frac{\pi i}{2}(\mu+\frac{N}{2})}}
     {(2\pi)^{\mu+\frac{N}{2}} s^{\mu+\frac{N}{2}}}
\int_{S^{N-1}} (\langle\omega,\eta\rangle - i0)^{-\mu-\frac{N}{2}}
h(\omega) d\sigma(\omega),
\end{equation*}
where $(\langle\omega,\eta\rangle - i0)^\lambda$ denotes the substitution
of $x=\langle\omega,\eta\rangle$ into the distribution
$(x-i0)^\lambda$ (see Example \ref{ex:Riesz}).
Since $h$ is an even function,
the above integral amounts to
\[
e^{i\frac{\pi}{2}(\mu+\frac{N}{2})}
\cos\frac{\pi}{2}\left(\mu+\frac{N}{2}\right)
\int_{S^{N-1}} |\langle\omega,\eta\rangle|^{-\mu-\frac{N}{2}}
h(\omega) d\sigma(\omega)
\]
by \eqref{eqn:xi0}.
Therefore,
$\mathcal{F}_{\mathbb{R}^N} h_{\mu-\frac{N}{2}} (s\eta)$ equals
\begin{align*}
& (2\pi)^{-\mu-\frac{N}{2}} s^{-\mu-\frac{N}{2}}
  \Gamma\left(\mu+\frac{N}{2}\right) \cos\frac{\pi}{2}\left(\mu+\frac{N}{2}\right)
  \int_{S^{N-1}} |\langle\omega,\eta\rangle|^{-\mu-\frac{N}{2}}
  h(\omega)d\sigma(\omega)
\\
&= C_N(\mu)^{-1} s^{-\mu-\frac{N}{2}} (\mathcal{Q}_\mu h) (\eta).
\end{align*}
Thus, Proposition \ref{prop:QF} has been proved.
\end{proof}

So far, $N$ has been an arbitrary positive integer.
Suppose now that $N$ is an even integer, say,
 $N=2n$.
We introduce the symplectic Fourier transform
defined by the formula:
\[
(\mathcal{F}_J f)(Y)
:= \int_{\mathbb{R}^{2n}} f(X) e^{-2\pi i[X,Y]} dX.
\]
We identify $\mathbb{R}^{2n}$ with $\mathbb{C}^n$ by $(x,\xi)
\mapsto x+i\xi$. Correspondingly, the complex structure on
$\mathbb{R}^{2n}$ is given by the
 linear transform
$$
J: \mathbb{R}^{2n} \to \mathbb{R}^{2n},
\quad
J(x,\xi) := (-\xi,x).
$$
Then the formula \eqref{eqn:BJ} is equivalent to
\[
[X,Y] = \langle X,JY \rangle
\quad
(X, Y \in \mathbb{R}^{2n}),
\]
and therefore, our $\mathcal{F}_J$ and the usual Fourier transform
$\mathcal{F}_{\mathbb{R}^{2n}}$ are related by the formula:
\begin{equation}\label{eqn:FJF}
(\mathcal{F}_J f)(Y) = \mathcal{F}_{\mathbb{R}^{2n}}(JY).
\end{equation}

Likewise,
the linear operators $\mathcal{T}_\mu$ (see \eqref{eqn:Tmu}) and
$\mathcal{Q}_\mu$ for $N=2n$ (see \eqref{eqn:Qmu}) are related by
\[
\mathcal{T}_\mu f(Y) = \mathcal{Q}_\mu(JY).
\]
Therefore, Proposition \ref{prop:QF} leads us to:
\begin{proposition}\label{prop:TF}
Let $C_N(\mu)$ be the constant defined in \eqref{eqn:CNmu}.
Then,
\[
\mathcal{T}_\mu = C_{2n}(\mu) \mathcal{F}_J \big|_{V_{-\mu}}.
\]
\end{proposition}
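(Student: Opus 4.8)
The plan is to derive Proposition \ref{prop:TF} as an immediate consequence of Proposition \ref{prop:QF} by translating the latter through the linear change of variables $J$. The point is that $\mathcal{T}_\mu$ is nothing but $\mathcal{Q}_\mu$ precomposed (or postcomposed, depending on the convention) with the orthogonal transformation $J$, and similarly $\mathcal{F}_J$ is $\mathcal{F}_{\mathbb{R}^{2n}}$ twisted by $J$; since $J$ is orthogonal, the Euclidean structures on $S^{2n-1}$ and on $\mathbb{R}^{2n}$ that enter the definitions of $\mathcal{Q}_\mu$ and $\mathcal{F}$ are preserved, so no extra Jacobian factors appear and the same scalar $C_{2n}(\mu)$ governs both identities.

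First I would record precisely the two relations already displayed in the text just above the statement: the identity $[X,Y] = \langle X, JY\rangle$, which gives $\mathcal{F}_J f(Y) = (\mathcal{F}_{\mathbb{R}^{2n}} f)(JY)$ (equation \eqref{eqn:FJF}), and the analogous $(\mathcal{T}_\mu f)(Y) = (\mathcal{Q}_\mu f)(JY)$ for $N = 2n$, which follows by substituting $\omega \mapsto J\omega$ in the defining integral \eqref{eqn:Tmu} and using that $J$ is a measure-preserving diffeomorphism of $S^{2n-1}$ together with $|[\omega,\eta]| = |\langle \omega, J\eta\rangle|$. Here I would also note that $V_{-\mu} = V_{-\mu}(\mathbb{R}^{2n})$ is stable under $f \mapsto f\circ J$, since $J$ is linear and commutes with scalar dilations; hence restricting to $V_{-\mu}$ is compatible with the twist by $J$.

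Next I would apply Proposition \ref{prop:QF} with $N = 2n$: for $f \in V_{-\mu}$ we have $\mathcal{Q}_\mu f = C_{2n}(\mu)\,\mathcal{F}_{\mathbb{R}^{2n}} f$ as meromorphic families of distributions. Evaluating both sides at $JY$ and using the two relations from the previous paragraph yields
\[
(\mathcal{T}_\mu f)(Y) = (\mathcal{Q}_\mu f)(JY) = C_{2n}(\mu)\,(\mathcal{F}_{\mathbb{R}^{2n}} f)(JY) = C_{2n}(\mu)\,(\mathcal{F}_J f)(Y),
\]
which is exactly the asserted identity $\mathcal{T}_\mu = C_{2n}(\mu)\,\mathcal{F}_J|_{V_{-\mu}}$. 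Since Proposition \ref{prop:QF} is an identity of operators depending meromorphically on $\mu$, and the twist by $J$ does not affect meromorphy in $\mu$, the resulting identity for $\mathcal{T}_\mu$ is likewise meromorphic in $\mu$, so the restriction to a domain of convergence and analytic continuation are inherited for free.

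There is essentially no obstacle here: the proposition is a formal corollary of Proposition \ref{prop:QF}, and the only thing requiring a word of care is the bookkeeping around the complex structure identification $\mathbb{R}^{2n} \simeq \mathbb{C}^n$, $(x,\xi)\mapsto x+i\xi$, and the sign conventions in $J(x,\xi) = (-\xi,x)$ and in \eqref{eqn:BJ}, so that $[X,Y] = \langle X,JY\rangle$ holds with the stated signs. I would therefore keep the proof to a single short paragraph: state the two twisting relations, invoke Proposition \ref{prop:QF}, and conclude.
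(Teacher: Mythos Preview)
Your proposal is correct and matches the paper's approach exactly: the paper derives Proposition~\ref{prop:TF} in one line from Proposition~\ref{prop:QF} via the two twisting relations $(\mathcal{F}_J f)(Y) = (\mathcal{F}_{\mathbb{R}^{2n}} f)(JY)$ and $(\mathcal{T}_\mu f)(Y) = (\mathcal{Q}_\mu f)(JY)$, both of which are recorded in the text immediately preceding the statement. Your write-up is, if anything, more detailed than the paper's, which simply says that Proposition~\ref{prop:QF} ``leads us to'' the result.
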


\begin{remark}\label{rem:TF}
Since the symplectic Fourier transform $\mathcal{F}_J$ induces a
bijection $\mathcal{F}_J \big|_{V_{-\mu}} : V_{-\mu}
\stackrel{\sim}{\to} V_{\mu}$ for all $\mu\in\mathbb{C}$,
Proposition \ref{prop:TF} implies that $\mathcal{T}_\mu$ is also
bijective as far as $C_{2n}(\mu) \ne 0, \infty$.

We note that $C_{2n}(\mu)$ has  simple zeros at
$\mu+n = 0,-2,-4,\dotsc$.
In this case, the kernel $\bigl| [X,Y] \bigr|^{-\mu-n}$ is a
polynomial in $Y$ of degree $-(\mu+n)$,
and correspondingly, $(\mathcal{T}_\mu f)(Y)$ is also a polynomial of
the same degree.
Thus, $\operatorname{Image}\mathcal{T}_\mu$ is finite dimensional,
and $\operatorname{Ker}\mathcal{T}_\mu$ is infinite dimensional.

On the other hand, $C_{2n}(\mu)$ has simple poles at $\mu+n =
1,3,5,\dotsc$. This corresponds to the fact that the distribution
$|x|^\lambda$ of one variable has simple poles at
$\lambda=-1,-3,-5,\dotsc$ (see \cite{G}).
\end{remark}

We are now ready to complete the proof of Theorem \ref{thm:Teigen}.
\begin{proof}[Proof of Theorem \ref{thm:Teigen}]
Suppose $p \in \mathcal{H}^{\alpha,\beta}(\mathbb{C}^n)$.
Since $J$ acts on $z_j$ $(1 \le j \le n)$ by $\sqrt{-1}$ and
$\bar{z}_j$ by $-\sqrt{-1}$,
we have
\begin{equation}\label{eqn:pJ}
p(J\eta) = (-1)^{\frac{\alpha-\beta}{2}} p(\eta).
\end{equation}
In view of Lemma \ref{lem:Boch},
Proposition \ref{prop:TF}, and \eqref{eqn:FJF},
the operator $\mathcal{T}_\mu$ acts on
$\mathcal{H}^{\alpha,\beta}(\mathbb{C}^n)$ as a scalar
\[
(-1)^{\frac{\alpha-\beta}{2}} C_{2n}(\mu) B_{2n}(\mu-n,\alpha+\beta).
\]
This amounts to $(-1)^\beta A_{\alpha+\beta}(\mu)$,
whence Theorem \ref{thm:Teigen}.
\end{proof}

\section{Proof of Theorem
\ref{thm:gBZ}} \label{sec:3}

\subsection{Dimension formulas for spherical harmonics}

This subsection summarizes some elementary results on the dimensions
of harmonic polynomials in a way that we shall use later. They are
more or less known, however, we give a brief account of them for the
convenience of the reader.

Let $\mathcal{P}^k(\mathbb{R}^N)$ be the complex vector space of
homogeneous polynomials in $N$ variables of degree $k$. Its
dimension is given by the binomial coefficient:
\[
\dim \mathcal{P}^k(\mathbb{R}^N)
= \begin{pmatrix} k+N-1 \\ k \end{pmatrix}.
\]
In light of the linear bijection
(see e.g.\ \cite[pp.~17]{xhel}):
\[
\mathcal{H}^k(\mathbb{R}^N) \oplus \mathcal{P}^{k-2}(\mathbb{R}^N)
\stackrel{\sim}{\to}
\mathcal{P}^k(\mathbb{R}^N),
\quad
(p,q) \mapsto p(X) + |X|^2 q(X),
\]
we get the dimension formula of $\mathcal{H}^k(\mathbb{R}^N)$:
\begin{align}\label{eqn:HSS}
\dim\mathcal{H}^k(\mathbb{R}^N)
&= \dim \mathcal{P}^k(\mathbb{R}^N) - \dim \mathcal{P}^{k-2}(\mathbb{R}^N)
\nonumber
\\
&= \frac{(k+N-3)! (2k+N-2)}{k! (N-2)!}.
\end{align}
In the next subsection, we shall use the following recurrence formula:
\begin{lemma}\label{lem:Hkrec}
$\displaystyle \dim \mathcal{H}^k(\mathbb{R}^N) + \dim
\mathcal{H}^{k-1}(\mathbb{R}^{N+1}) = \dim
\mathcal{H}^k(\mathbb{R}^{N+1}).
$
\end{lemma}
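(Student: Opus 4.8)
The plan is to prove the recurrence
\[
\dim \mathcal{H}^k(\mathbb{R}^N) + \dim \mathcal{H}^{k-1}(\mathbb{R}^{N+1}) = \dim \mathcal{H}^k(\mathbb{R}^{N+1})
\]
purely combinatorially, by substituting the closed-form dimension formula \eqref{eqn:HSS} into all three terms and verifying the resulting polynomial identity in $k$ and $N$. Concretely, I would write
\[
\dim \mathcal{H}^k(\mathbb{R}^N) = \frac{(k+N-3)!\,(2k+N-2)}{k!\,(N-2)!},
\]
and likewise express $\dim\mathcal{H}^{k-1}(\mathbb{R}^{N+1})$ (replace $N$ by $N+1$ and $k$ by $k-1$, giving $\frac{(k+N-3)!\,(2k+N-2)}{(k-1)!\,(N-1)!}$) and $\dim\mathcal{H}^k(\mathbb{R}^{N+1})$ (replace $N$ by $N+1$, giving $\frac{(k+N-2)!\,(2k+N-1)}{k!\,(N-1)!}$). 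The common factor $\frac{(k+N-3)!}{k!\,(N-1)!}$ can be pulled out of the left-hand side, leaving $(N-1)(2k+N-2) + k(2k+N-2) = (2k+N-2)(k+N-1)$, and one checks this equals $(k+N-2)(2k+N-1)$, which is exactly the numerator of the right-hand side after extracting the same common factor. This is an elementary factorial manipulation and the identity $(2k+N-2)(k+N-1) = (k+N-2)(2k+N-1)$ is immediate upon expansion.

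Alternatively — and this is the conceptual reason the recurrence holds — one can give a representation-theoretic or geometric argument using the restriction of harmonic polynomials. The branching rule for $O(N+1) \downarrow O(N)$ applied to the space of degree-$k$ spherical harmonics decomposes $\mathcal{H}^k(\mathbb{R}^{N+1})$, upon restriction of functions from $S^N$ to an equatorial $S^{N-1}$, into a sum of lower-degree pieces on $S^{N-1}$; tracking dimensions telescopes to the stated two-term identity. One can also argue directly with the exact sequence coming from writing a harmonic polynomial on $\mathbb{R}^{N+1}$ in terms of its expansion in powers of the last coordinate $x_{N+1}$, using Gegenbauer/Legendre-type recursions for the coefficients. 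However, since \eqref{eqn:HSS} is already available in the excerpt, the direct computational verification is the shortest route and is what I would present.

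The only mild subtlety — and the point I would flag — is the treatment of small values of $k$ and $N$ where factorials like $(k-1)!$ or $(N-2)!$ become problematic, or where $\mathcal{H}^{k-1}(\mathbb{R}^{N+1})$ degenerates (e.g.\ $k=0$, in which case the term is interpreted as $0$, or $N=2$). For $N \ge 2$ and $k \ge 1$ the formula \eqref{eqn:HSS} is literally valid and the manipulation goes through verbatim; the boundary cases $k=0$ and $N \le 2$ are checked by hand in a single line each. This is the ``main obstacle,'' but it is entirely routine and does not affect the argument in the range of $(k,N)$ that will actually be used later in Section \ref{sec:3}.
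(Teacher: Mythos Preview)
Your approach---direct substitution of the closed formula \eqref{eqn:HSS}---is perfectly sound in principle, but the execution contains an arithmetic slip that propagates into a false identity. When you replace $(k,N)$ by $(k-1,N+1)$ in \eqref{eqn:HSS}, the factor $2k+N-2$ becomes $2(k-1)+(N+1)-2 = 2k+N-3$, not $2k+N-2$ as you wrote. With the correct value, the bracketed quantity on the left becomes $(N-1)(2k+N-2)+k(2k+N-3)$, which does expand to $(k+N-2)(2k+N-1)$ as needed. By contrast, the identity you actually asserted, $(2k+N-2)(k+N-1)=(k+N-2)(2k+N-1)$, is false (try $k=1$, $N=2$: left side $4$, right side $3$). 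So the step you called ``immediate upon expansion'' would fail as written; once the substitution is corrected the argument goes through.

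For comparison, the paper takes a slightly different and cleaner route: instead of substituting the factorial expression \eqref{eqn:HSS} directly, it applies Pascal's identity $\binom{m}{k}+\binom{m}{k-1}=\binom{m+1}{k}$ to the dimensions $\dim\mathcal{P}^k(\mathbb{R}^N)=\binom{k+N-1}{k}$ to get
\[
\dim\mathcal{P}^k(\mathbb{R}^N)+\dim\mathcal{P}^{k-1}(\mathbb{R}^{N+1})=\dim\mathcal{P}^k(\mathbb{R}^{N+1}),
\]
and then subtracts the same identity with $k$ replaced by $k-2$, using $\dim\mathcal{H}^k=\dim\mathcal{P}^k-\dim\mathcal{P}^{k-2}$. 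This avoids any polynomial expansion and handles the boundary cases automatically via the convention $\dim\mathcal{P}^{j}=0$ for $j<0$. Your direct computation buys nothing extra and is more error-prone, but once fixed it is an acceptable alternative.
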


\begin{proof}
By the elementary combinatorial formula
\[
\begin{pmatrix} m \\ k \end{pmatrix}
+ \begin{pmatrix} m \\ k-1 \end{pmatrix}
= \begin{pmatrix} m+1 \\ k \end{pmatrix},
\]
we have
\begin{equation}\label{eqn:Skk}
\dim \mathcal{P}^k(\mathbb{R}^N) + \dim \mathcal{P}^{k-1}(\mathbb{R}^{N+1})
= \dim \mathcal{P}^k(\mathbb{R}^{N+1}).
\end{equation}
Taking the difference of \eqref{eqn:Skk} for $k$ and $k-2$,
and applying \eqref{eqn:HSS},
we get Lemma \ref{lem:Hkrec}.
\end{proof}

To find the dimension formula of
$\mathcal{H}^{\alpha,\beta}(\mathbb{C}^n)$ one might apply the above
method (see e.g.\ \cite[Section 11.2.1]{VK}), but it would be more
convenient for our purpose to use representation theory. There is a
natural action of the unitary group $U(n)$ on
$\mathcal{H}^{\alpha,\beta}(\mathbb{C}^n)$. This representation is
irreducible, and its highest weight is given by $\displaystyle
(\alpha,0,\dots,0,-\beta) $ in the standard coordinates of the
Cartan subalgebra. By the Weyl character formula, we get
\[
\dim \mathcal{H}^{\alpha,\beta}(\mathbb{C}^n)
= \frac{(\alpha+\beta+n-1)}
       {(n-1)! (n-2)!}\prod_{i=2}^{n-1} (\alpha+i-1)(\beta+n-i)
.
\]
If we use the Pochhammer symbol $(a)_l$ defined by
\[
(a)_l
:= \frac{\Gamma(a+l)}{\Gamma(a)}
=  a(a+1) \cdots (a+l-1),
\]
then we may express these dimensions as
\begin{alignat}{2}\label{eqn:Hab}
&\dim \mathcal{H}^k(\mathbb{R}^N)
&&= \frac{(k+1)_{N-3} (2k+N-2)}{\Gamma(N-1)},
\nonumber
\\
&\dim \mathcal{H}^{\alpha,\beta}(\mathbb{C}^n)
&&= \frac{(\alpha+\beta+n-1) (\alpha+1)_{n-2} (\beta+1)_{n-2}}
       {\Gamma(n)\Gamma(n-1)}.
\end{alignat}

\subsection{Alternating sum of $\dim\mathcal{H}^{\alpha,\beta}(\mathbb{C}^n)$}

By the direct sum decomposition \eqref{eqn:Habk},
the following identity is obvious:
\[
\dim\mathcal{H}^k(\mathbb{R}^{2n})
= \sum_{\alpha+\beta=k}
  \dim\mathcal{H}^{\alpha,\beta}(\mathbb{C}^n).
\]
However, what we need for the proof of Theorem \ref{thm:gBZ}
is an explicit formula for the alternating sum:
\[
D(k)
:= \sum_{\alpha+\beta=k} (-1)^\beta
\dim \mathcal{H}^{\alpha,\beta} (\mathbb{C}^n).
\]
Clearly, $D(k)=0$ for odd $k$ because
$\dim \mathcal{H}^{\alpha,\beta} (\mathbb{C}^n)
 = \dim \mathcal{H}^{\beta,\alpha} (\mathbb{C}^n)$.

A closed formula of $D(k)$ for even $k$ is the main issue of this
subsection, and we establish the following relation:
\begin{proposition}\label{lem:Dk}
\begin{equation}\label{eqn:Dk}
D(2l)
= \dim \mathcal{H}^l(\mathbb{R}^{n+1})
= \frac{(n-1)_l (\frac{n+1}{2})_l}
       {l! (\frac{n-1}{2})_l} .
\end{equation}
\end{proposition}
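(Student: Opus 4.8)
The plan is to prove the identity $D(2l)=\dim\mathcal{H}^l(\mathbb{R}^{n+1})$ by two reductions: first an algebraic manipulation of the alternating sum into a hypergeometric-type expression, and then recognizing that expression via the combinatorial recurrence of Lemma \ref{lem:Hkrec}. Concretely, I would start from the closed formula \eqref{eqn:Hab} for $\dim\mathcal{H}^{\alpha,\beta}(\mathbb{C}^n)$ and write
\[
D(2l)=\sum_{\beta=0}^{2l}(-1)^\beta
\frac{(2l+n-1)\,(2l-\beta+1)_{n-2}\,(\beta+1)_{n-2}}{\Gamma(n)\Gamma(n-1)}.
\]
Pulling out the factor $\frac{2l+n-1}{\Gamma(n)\Gamma(n-1)}$, the task becomes evaluating the finite alternating sum $S:=\sum_{\beta=0}^{2l}(-1)^\beta (2l-\beta+1)_{n-2}(\beta+1)_{n-2}$, which is a polynomial identity in $l$ (and in $n$), so it suffices to verify it as an identity of polynomials or to evaluate it by standard summation techniques.

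The cleanest route I would take for evaluating $S$ is a generating-function / finite-difference argument. Note that $(\beta+1)_{n-2}=\frac{(\beta+n-2)!}{\beta!}$ and $(2l-\beta+1)_{n-2}=\frac{(2l-\beta+n-2)!}{(2l-\beta)!}$, so $S$ is, up to normalization, the coefficient of $x^{2l}$ in the product of two binomial series: since $\sum_{\beta\ge0}\binom{\beta+n-2}{n-2}x^\beta=(1-x)^{-(n-1)}$, we get
\[
S=(n-2)!^2\,[x^{2l}]\,(1-(-x))^{-(n-1)}(1-x)^{-(n-1)}\Big|_{\text{suitably signed}}
=(n-2)!^2\,[x^{2l}]\,(1-x^2)^{-(n-1)}.
\]
The sign bookkeeping (the $(-1)^\beta$ turns one factor $(1-x)^{-(n-1)}$ into $(1+x)^{-(n-1)}$, whose product with the other gives $(1-x^2)^{-(n-1)}$) is the step requiring care, but it is routine. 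Then $[x^{2l}](1-x^2)^{-(n-1)}=\binom{l+n-2}{n-2}=\frac{(n-1)_l}{l!}$, so
\[
D(2l)=\frac{2l+n-1}{\Gamma(n)\Gamma(n-1)}\cdot (n-2)!^2\cdot\frac{(n-1)_l}{l!}
=\frac{(2l+n-1)(n-1)_l}{(n-1)\,l!}.
\]

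Finally I would match this against $\dim\mathcal{H}^l(\mathbb{R}^{n+1})$. From \eqref{eqn:Hab} with $N=n+1$ we have $\dim\mathcal{H}^l(\mathbb{R}^{n+1})=\frac{(l+1)_{n-2}(2l+n-1)}{\Gamma(n)}$; a short rewriting using $(l+1)_{n-2}=\frac{(l+n-2)!}{l!}=\frac{(n-1)_l}{l!}\cdot\frac{(n-2)!\, l!}{l!}\cdot\frac{1}{(n-2)!}$—more cleanly, $(l+1)_{n-2}\,\Gamma(n)^{-1}=\frac{(l+n-2)!}{l!\,(n-1)!}$ and $(n-1)_l/((n-1)l!)=\frac{(l+n-2)!}{l!\,(n-1)!}$—shows the two expressions coincide. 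The last equality in \eqref{eqn:Dk}, namely $\dim\mathcal{H}^l(\mathbb{R}^{n+1})=\frac{(n-1)_l(\frac{n+1}{2})_l}{l!(\frac{n-1}{2})_l}$, then follows by the duplication formula for the Gamma function applied to $(2l+n-1)$, rewriting $\frac{2l+n-1}{n-1}=\frac{(\frac{n+1}{2})_l}{(\frac{n-1}{2})_l}\cdot\frac{(?)}{(?)}$; this is a purely mechanical Pochhammer identity. As an alternative to the generating-function computation, one can instead sum the recurrence of Lemma \ref{lem:Hkrec} telescopically—$\dim\mathcal{H}^l(\mathbb{R}^{n+1})=\sum_{j=0}^{l}\dim\mathcal{H}^{?}(\mathbb{R}^{?})$—but matching that to the alternating sum over $(\alpha,\beta)$ still requires regrouping terms by $\beta$, so the generating-function approach is the one I would present.

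The main obstacle is the sign bookkeeping in the generating-function step: one must be careful that the $(-1)^\beta$ converts exactly one of the two Pochhammer factors into the series $(1+x)^{-(n-1)}$ while leaving the other as $(1-x)^{-(n-1)}$, and that the extraction of the $x^{2l}$ coefficient from $(1-x^2)^{-(n-1)}$ is legitimate (it is, since everything is a formal power series identity, equivalently a polynomial identity in $l$). Everything after that is a routine rearrangement of factorials and an application of the Gamma duplication formula, with no analytic subtlety.
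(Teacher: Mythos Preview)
Your argument is correct. The generating-function step is the heart of it and works as you say: with $\alpha+\beta=2l$ fixed, the Cauchy product of $\sum_\beta(-1)^\beta\binom{\beta+n-2}{n-2}x^\beta=(1+x)^{-(n-1)}$ and $\sum_\alpha\binom{\alpha+n-2}{n-2}x^\alpha=(1-x)^{-(n-1)}$ gives $(1-x^2)^{-(n-1)}$, and extracting the coefficient of $x^{2l}$ yields $\binom{l+n-2}{n-2}$. The remaining factorial bookkeeping and the identity $(\tfrac{n+1}{2})_l/(\tfrac{n-1}{2})_l=(2l+n-1)/(n-1)$ are routine, exactly as you indicate. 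One small caveat: your use of the dimension formula \eqref{eqn:Hab} presupposes $n\ge2$ (for $n=1$ the factor $(n-2)!$ is singular and $\mathcal{H}^{\alpha,\beta}(\mathbb{C}^1)=0$ when $\alpha,\beta\ge1$); the $n=1$ case is trivial to check directly, or one may invoke the meromorphic dependence on $n$ noted in Remark~\ref{rem:Dk}. Also, despite what your opening sentence announces, you never actually use Lemma~\ref{lem:Hkrec}; your proof is self-contained.

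This is a genuinely different route from the paper's. The paper interprets $(-1)^l D(2l)$ as the value of the $O(2n)$-character $\chi_{\mathcal{H}^{2l}(\mathbb{R}^{2n})}$ at the element $J$, writes this character via the Weyl formula as a ratio of determinants $s_{2l}(x)/s_0(x)$, and evaluates the limit $x_j\to\sqrt{-1}$ using an auxiliary combinatorial lemma expressing $x^m+x^{-m}$ as a polynomial in $x+x^{-1}$ whose coefficients are (signed) dimensions $\dim\mathcal{H}^j(\mathbb{R}^{m+2-2j})$; Lemma~\ref{lem:Hkrec} enters only in the inductive proof of that auxiliary lemma. Your approach is considerably more elementary and direct---a single Cauchy-product identity in place of character theory plus a determinant limit---at the cost of losing the conceptual explanation that $D(2l)$ is a character value at a specific group element.
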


\begin{remark}\label{rem:Dk}
The Pochhammer symbol
$(a)_l$ may be regarded as a meromorphic function.
Thus, the right-hand side of \eqref{eqn:Dk} can be regarded as a
meromorphic function of $n$.
In this sense,
the right-hand side of
\eqref{eqn:Dk} still makes sense for $n=1$.
\end{remark}

The rest of this subsection is devoted to the proof of Proposition
\ref{lem:Dk}.
For this,
we set
\begin{equation*}
X^{(l)} := x^l + \frac{1}{x^l}
\quad\text{for $l=1,2,\dotsc$}.
\end{equation*}
It is readily seen that $X^{(l)}$ is expressed as a monomial in
$$
X := x+\frac{1}{x}
$$
of degree $l$. For example,
\begin{equation}\label{eqn:XN2}
X^{(1)} = X, \
X^{(2)} = X^2-2, \
X^{(3)} = X^3-3X,
\dotsc.
\end{equation}
For an arbitrary $l$, we have the following formula:
\begin{lemma}\label{lem:3.3}
\begin{equation}\label{eqn:XN}
X^{(l)}
= \sum_{j=0}^{[\frac{l}{2}]} (-1)^j \dim
   \mathcal{H}^j (\mathbb{R}^{l+2-2j}) X^{l-2j}.
\end{equation}
\end{lemma}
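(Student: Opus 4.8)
The plan is to prove the identity \eqref{eqn:XN} by induction on $l$, using the three-term recurrence satisfied by the polynomials $X^{(l)}$ and the recurrence for dimensions of spherical harmonics proved in Lemma \ref{lem:Hkrec}. First I would record the basic recurrence
\[
X^{(l+1)} = X \cdot X^{(l)} - X^{(l-1)}
\qquad (l \ge 1),
\]
which is immediate from $\bigl(x^l + x^{-l}\bigr)\bigl(x + x^{-1}\bigr) = \bigl(x^{l+1}+x^{-(l+1)}\bigr) + \bigl(x^{l-1}+x^{-(l-1)}\bigr)$, together with the base cases $X^{(0)} = 2$ and $X^{(1)} = X$. (Here one must be slightly careful with the case $l=0$: the formula \eqref{eqn:XN} at $l=0$ reads $X^{(0)} = \dim\mathcal{H}^0(\mathbb{R}^2) = 1$, whereas $x^0 + x^{-0} = 2$; so I would either state \eqref{eqn:XN} only for $l \ge 1$, matching the definition of $X^{(l)}$ in the text, or track the harmless factor-of-two discrepancy at the bottom degree. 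I expect the paper only needs $l \ge 1$.)

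Next, assuming \eqref{eqn:XN} holds for $l$ and $l-1$, I would substitute into the recurrence and collect the coefficient of $X^{l+1-2j}$ on the right-hand side. The term $X \cdot X^{(l)}$ contributes $(-1)^j \dim\mathcal{H}^j(\mathbb{R}^{l+2-2j})$ to the coefficient of $X^{l+1-2j}$, while $-X^{(l-1)}$ contributes $-(-1)^{j-1}\dim\mathcal{H}^{j-1}(\mathbb{R}^{l+1-2(j-1)}) = (-1)^j \dim\mathcal{H}^{j-1}(\mathbb{R}^{l+3-2j})$ to the same coefficient (after re-indexing, since $X^{l+1-2j} = X^{(l-1)-2(j-1)}$ with degree shift by $2$ in the ambient space). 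Adding these, the coefficient of $X^{l+1-2j}$ in $X^{(l+1)}$ is
\[
(-1)^j\Bigl(\dim\mathcal{H}^{j}(\mathbb{R}^{l+2-2j}) + \dim\mathcal{H}^{j-1}(\mathbb{R}^{l+3-2j})\Bigr),
\]
and Lemma \ref{lem:Hkrec} with $N = l+2-2j$ and $k = j$ identifies the parenthesized sum with $\dim\mathcal{H}^{j}(\mathbb{R}^{l+3-2j}) = \dim\mathcal{H}^{j}(\mathbb{R}^{(l+1)+2-2j})$, which is exactly the coefficient predicted by \eqref{eqn:XN} at level $l+1$. One then checks the two extreme terms separately: the top term $j=0$ gives coefficient $\dim\mathcal{H}^0(\mathbb{R}^{l+2}) = 1$ of $X^{l+1}$ as it should, and the bottom term (at $j = [\tfrac{l+1}{2}]$) requires noting that the spurious contribution from $-X^{(l-1)}$ at its own bottom index either vanishes or matches, depending on the parity of $l$; this edge bookkeeping is the only place where a small case distinction between even and odd $l$ is needed.

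The main obstacle is therefore not conceptual but purely a matter of index-shift bookkeeping: making sure that the re-indexing $X^{l+1-2j} = X^{(l-1)-2(j-1)}$ lines up correctly so that Lemma \ref{lem:Hkrec} applies with the right parameters, and handling the boundary index of the sum (where the two inductive formulas have different ranges of $j$) without an off-by-one error. Once the recurrence bookkeeping is arranged so that Lemma \ref{lem:Hkrec} slots in cleanly, the induction closes immediately. I would present it compactly: state the recurrence for $X^{(l)}$, do one line of substitution, invoke Lemma \ref{lem:Hkrec}, and check the endpoints.
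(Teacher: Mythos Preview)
Your approach is essentially identical to the paper's: induction on $l$ via the three-term recurrence $X^{(l+1)} = X\,X^{(l)} - X^{(l-1)}$, followed by re-indexing and an application of Lemma~\ref{lem:Hkrec}, with the same endpoint bookkeeping (the paper handles the odd-$l$ boundary by noting $\dim\mathcal{H}^j(\mathbb{R}^{l+2-2j})=0$ at $j=\tfrac{l+1}{2}$). The only cosmetic difference is that the paper takes $l=1,2$ as base cases, thereby sidestepping the $l=0$ discrepancy you flagged.
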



\begin{proof}
We prove Lemma \ref{lem:3.3} by induction on $l$. The equation
\eqref{eqn:XN} holds for $l=1,2$ by \eqref{eqn:XN2}. Suppose $l \ge
2$. We shall prove the equation \eqref{eqn:XN} for $l+1$. We use
\begin{align*}
X^{(l+1)} &= \left(x+\frac{1}{x}\right) \left(x^l +
\frac{1}{x^l}\right) - \left(x^{l-1} + \frac{1}{x^{l-1}}\right)
\\
&= X X^{(l)} - X^{(l-1)}.
\end{align*}
By substituting \eqref{eqn:XN} for $l$ and $l-1$ into the right-hand
side, we get
\begin{align*}
X^{(l+1)}
={}& \sum_{j=0}^{[\frac{l}{2}]} (-1)^j \dim
   \mathcal{H}^j(\mathbb{R}^{l+2-2j}) X^{l+1-2j}
\\
&  -  \sum_{i=0}^{[\frac{l-1}{2}]} (-1)^i \dim
    \mathcal{H}^i(\mathbb{R}^{l+1-2i}) X^{l-1-2i}
\\
={}& X^{l+1} + \sum_{j=1}^{[\frac{l+1}{2}]}\left( (-1)^j
   (\dim \mathcal{H}^j(\mathbb{R}^{l+2-2j})
     + \dim \mathcal{H}^{j-1}(\mathbb{R}^{l+3-2j})) X^{l+1-2j}\right).
\end{align*}
To see the second equality for odd $l$, we note that $\dim
\mathcal{H}^d(\mathbb{R}^1) = 0$ for $d \ge 2$, and thus
\begin{equation}\label{N:odd}
\dim \mathcal{H}^j(\mathbb{R}^{l+2-2j}) = 0
\quad \text{for $j = \frac{l+1}{2}$}.
\end{equation}

Applying the recurrence formula given in Lemma \ref{lem:Hkrec}, we
get \eqref{eqn:XN} for $l+1$. By induction, we have proved Lemma
\ref{lem:3.3}.
\end{proof}

\begin{proof}[Proof of Proposition \ref{lem:Dk}]
We take a maximal torus $T$ of $U(n)$ and its coordinate
$(x_1,\dots,x_n)$ such that
\begin{equation*}
T = \{x = (x_1,\dots,x_n) \in \mathbb{C}^n :
       |x_1| = \dots = |x_n| = 1 \},
\end{equation*}
and that the linear map
$J : \mathbb{R}^{2n} \to \mathbb{R}^{2n}$
is represented as
$J = (\sqrt{-1},\dots,\sqrt{-1}) \in T$.
Then the character $\chi_{\mathcal{H}^k(\mathbb{R}^{2n})} (g)$ of
the representation of $O(2n)$ on
$\mathcal{H}^k(\mathbb{R}^{2n})$ takes the value
\[
\sum_{\alpha+\beta=k} (-1)^{\frac{\alpha-\beta}{2}}
\dim \mathcal{H}^{\alpha,\beta}(\mathbb{C}^n)
= (-1)^{\frac{k}{2}} D(k)
\]
at $g = J$.

By using this observation,
we shall analyze the character $\chi_{\mathcal{H}^k(\mathbb{R}^{2n})}(g)$
as $g$ approaches to
 the singular point $J\in T$.

Let
\[
X_j^{(l)}
:= x_j^l + \frac{1}{x_j^l}
\quad
(1 \le j\le n, \  l \in \mathbb{N}),
\]
and we set
\[
s_k(x)
:= \det
\left(
   \begin{array}{llcl}
      X_1^{(k+n-1)} & X_2^{(k+n-1)} & \cdots & X_n^{(k+n-1)}
   \\
      X_1^{(n-2)}   & X_2^{(n-2)}   & \cdots & X_n^{(n-2)}
   \\
      \ \vdots      & \ \vdots      &        & \ \vdots
   \\
      X_1^{(1)}     & X_2^{(1)}     &        & X_n^{(1)}
   \\
      \ 1           & \ 1           & \cdots & \ 1
   \end{array}
\right).
\]
Then, by the Weyl character formula for the group $O(2n)$ and by
using a trick which reduces the summation over the Weyl group for
$O(2n)$ to that over the symmetric group $\mathcal{S}_n$ (see
\cite{P}), we have
\[
\chi_{\mathcal{H}^k(\mathbb{R}^{2n})} (x)
= \frac{s_k(x)}{s_0(x)}
\quad\text{for $x \in T$}.
\]

Since $X^{(l)} \equiv X^l \bmod \text{$\mathbb{Q}$-span} \langle
1,X,\dots,X^{l-1} \rangle$ an elementary property of the determinant
shows:
\[
s_k(x)
= \det
\left(
     \begin{array}{llcl}
      X_1^{(k+n-1)} & X_2^{(k+n-1)} & \cdots & X_n^{(k+n-1)}
   \\
      X_1^{n-2}     & X_2^{n-2}     & \cdots & X_n^{n-2}
   \\
      \ \vdots      & \ \vdots      &        & \ \vdots
   \\
      X_1           & X_2           &        & X_n
   \\
      \ 1           & \ 1           & \cdots & \ 1
   \end{array}
\right).
\]
As $x_j$ goes to $\sqrt{-1}$,
$X_j$ tends to $0$ $(1 \le j \le n)$.
Therefore, we have
\begin{align*}
\chi_{\mathcal{H}(\mathbb{R}^{2n})}^{2l} (J)
&= \lim_{X_1,\dots,X_N\to 0} \frac{s_{2l}(x)}{s_0(x)}
\\
&= \parbox[t]{20em}{the coefficient of $X^{n-1}$ in the expansion \eqref{eqn:XN}
         for $X^{(2l+n-1)}$}
\\
&= (-1)^l \dim \mathcal{H}^l(\mathbb{R}^{n+1}).
\end{align*}
Here, we have used Lemma \ref{lem:3.3} for the last equality.
Thus, we have proved
\[
D(2l) = \dim \mathcal{H}^l(\mathbb{R}^{n+1}).
\]
The second equality of \eqref{eqn:Dk} is immediate from \eqref{eqn:HSS}.
\end{proof}

\subsection{Triple integral as a Trace}\label{sec:33}
We are now ready to prove Theorem \ref{thm:gBZ}.
As we remarked in Introduction,
the both sides of Theorem \ref{thm:gBZ} are
meromorphic functions of $\lambda_1$, $\lambda_2$, and $\lambda_3$.
Therefore,
it is sufficient to prove the identity in Theorem \ref{thm:gBZ} in an
open set of the parameters $(\lambda_1,\lambda_2,\lambda_3) \in \mathbb{C}^3$.

By the change of variables $\mu_j := \frac{1}{2}
(\lambda_1+\lambda_2+\lambda_3-n) - \lambda_j$ $(1 \le j \le 3)$, we
first consider the case when $\operatorname{Re}\mu_1 \ll 0$,
$\operatorname{Re}\mu_2 \ll 0$, and $\operatorname{Re}\mu_3 \ll 0$.
Then, the operators $\mathcal{T}_{\mu_1}$, $\mathcal{T}_{\mu_2}$,
and $\mathcal{T}_{\mu_3}$ are Hilbert--Schmidt operators on
$L^2(S^{2n-1})$. In particular, the composition $\mathcal{T}_{\mu_1}
\mathcal{T}_{\mu_2} \mathcal{T}_{\mu_3}$ is of trace class, and its
trace is given by
\begin{align*}
&\operatorname{Trace}(\mathcal{T}_{\mu_1} \mathcal{T}_{\mu_2} \mathcal{T}_{\mu_3})
\\
&= \int_{(S^{2n-1})^3
}
\bigl|[X,Y]\bigr|^{-\mu_1-n}
\bigl|[Y,Z]\bigr|^{-\mu_2-n}
\bigl|[Z,X]\bigr|^{-\mu_3-n}
d\sigma(X) d\sigma(Y) d\sigma(Z).
\end{align*}
On the other hand,
the trace of the operator
$\mathcal{T}_{\mu_1} \mathcal{T}_{\mu_2} \mathcal{T}_{\mu_3}$
can be also computed by its eigenvalues.
Therefore, by using Theorem \ref{thm:Teigen},
we have
\begin{align*}
\operatorname{Trace}(\mathcal{T}_{\mu_1} \mathcal{T}_{\mu_2} \mathcal{T}_{\mu_3})
&= \sum_{\alpha,\beta}
   \left( \prod_{j=1}^3 (-1)^\beta A_{\alpha+\beta} (\mu_j) \right)
   \dim \mathcal{H}^{\alpha,\beta} (\mathbb{C}^n)
\\
&= \sum_{k=0}^\infty \prod_{j=1}^3 A_k(\mu_j)
   \left( \sum_{\alpha+\beta=k} (-1)^{3\beta}
          \dim \mathcal{H}^{\alpha,\beta} (\mathbb{C}^n) \right)
\\
&= \sum_{l=0}^\infty D(2l) \prod_{j=1}^3 A_{2l} (\mu_j).
\end{align*}

Applying Proposition \ref{lem:Dk},
we get
\begin{equation*}
\operatorname{Trace} (\mathcal{T}_{\mu_1} \mathcal{T}_{\mu_2} \mathcal{T}_{\mu_3})
= \sum_{l=0}^\infty
   A_{2l}(\mu_1) A_{2l}(\mu_2) A_{2l}(\mu_3)
   \dim \mathcal{H}^l(\mathbb{R}^{n+1}).
\end{equation*}

In light of the recurrence relation:
\[
\frac{A_{2l+2}(\mu)}{A_{2l}(\mu)}
= \frac{l+\frac{n+\mu}{2}}{l+\frac{n-\mu}{2}},
\]
the meromorphic function $A_{2l}(\mu)$ can be expressed in terms of
Pochhammer symbols as
\[
A_{2l}(\mu)
= \frac{(\frac{n+\mu}{2})_l}{(\frac{n-\mu}{2})_l} A_0(\mu),
\]
where
\begin{equation}\label{eqn:A0}
A_0(\mu)
= 2\pi^{n-\frac{1}{2}}
  \frac{\Gamma(\frac{1-n-\mu}{2})}{\Gamma(\frac{n-\mu}{2})}.
\end{equation}

Therefore,
\begin{align*}
&\operatorname{Trace}(\mathcal{T}_{\mu_1} \mathcal{T}_{\mu_2} \mathcal{T}_{\mu_3})
\\
&= A_0(\mu_1) A_0(\mu_2) A_0(\mu_3)
   \sum_{l=0}^\infty \frac{(n-1)_l (\frac{n+1}{2})_l}
                          {l! (\frac{n-1}{2})_l}
   \prod_{j=1}^3 \frac{(\frac{n+\mu_j}{2})_l}
                      {(\frac{n-\mu_j}{2})_l}
\\
&= A_0(\mu_1) A_0(\mu_2) A_0(\mu_3)
   \, {}_5F_4
   \left( \begin{matrix}
             n-1 & \frac{n+1}{2} & \frac{n+\mu_1}{2}
                 & \frac{n+\mu_2}{2} & \frac{n+\mu_3}{2}
          \\[1ex]
                 & \frac{n-1}{2} & \frac{n-\mu_1}{2}
                 & \frac{n-\mu_2}{2} & \frac{n-\mu_3}{2}
          \end{matrix}
          \   ; 1
   \right).
\end{align*}
Here ${}_5F_4$ is a generalized hypergeometric function.

A generalized hypergeometric function
\[
{}_pF_q
\left( \begin{matrix}
         \alpha_1 & \alpha_2 & \cdots & \alpha_p
       \\
                  & \beta_1  & \cdots & \beta_q
       \end{matrix}
      \  ; z
\right)
\]
is called \textit{well-poised} (see \cite{B}) if $p=q+1$ and
\[
1+\alpha_1 = \alpha_2+\beta_1 = \dots = \alpha_p+\beta_q.
\]
In particular, our case is well-poised,
and we can use the following
\textit{Dougall--Ramanujan identity}
(see [loc.~cit., pp.~25--26]):
\begin{align*}
&{}_5F_4
\left( \begin{matrix}
          m-1 & \frac{m+1}{2} & -x & -y & -z
       \\[1ex]
              & \frac{m-1}{2} & x+m & y+m &z+m
       \end{matrix}
       \  ; 1
\right)
\\
&= \frac{\Gamma(x+m) \Gamma(y+m) \Gamma(z+m) \Gamma(x+y+z+m)}
        {\Gamma(m) \Gamma(x+y+m) \Gamma(y+z+m) \Gamma(x+z+m)}.
\end{align*}
Together with \eqref{eqn:A0},
we get
\begin{equation}\label{eqn:TrT}
\operatorname{Trace} (\mathcal{T}_{\mu_1} \mathcal{T}_{\mu_2} \mathcal{T}_{\mu_3})
= \frac{(2\pi^{n-\frac{1}{2}})^3 \Gamma(\frac{1-n-\mu_1}{2})
          \Gamma(\frac{1-n-\mu_2}{2}) \Gamma(\frac{1-n-\mu_3}{2})
          \Gamma(\frac{-\mu_1-\mu_2-\mu_3-n}{2})}
        {\Gamma(n) \Gamma(-\frac{\mu_1+\mu_2}{2})
          \Gamma(-\frac{\mu_2+\mu_3}{2})
          \Gamma(-\frac{\mu_1+\mu_3}{2})}.
\end{equation}
Now, Theorem \ref{thm:gBZ} follows by substituting $\mu_1 =
-\frac{1}{2}(\alpha+n)$, $\mu_2 = -\frac{1}{2}(\beta+n)$, and $\mu_3
= -\frac{1}{2}(\gamma+n)$.

\section{Other triple integral formulas}
\label{sec:4}

In this section, we discuss explicit formulas for the integrals of
the triple product of powers of $|x-y|$ and $|\langle x,y\rangle|$
instead of those of the symplectic form $\bigl|[X,Y]\bigr|$.

\subsection{Triple product of powers of $|x-y|$}

In this subsection we consider a  family of linear operators that
depend meromorphically on $\mu\in\mathbb C$ by
\[
\mathcal{R}_\mu :
C^\infty(S^m) \to C^\infty(S^m)
\]
defined by
 \begin{equation}\label{eqn:Rmu}
 (\mathcal{R}_\mu f)(\eta) =
\int_{S^m} |\omega-\eta|^{-\mu-m} f(\omega) d\sigma(\omega).
\end{equation}
The multiplier action of $\mathcal{R}_\mu$ on spherical harmonics is
known (see e.g. \cite{Bc}):
\begin{equation}\label{eqn:Reigen}
\mathcal{R}_\mu \Big|_{\mathcal{H}^k(\mathbb{R}^{m+1})}
= \gamma_k(\mu) \operatorname{id},
\end{equation}
where
$\gamma_k(\mu) \equiv \gamma_k(\mu,\mathbb{R}^{m+1})$
is given by
\begin{equation}\label{eqn:gamma}
\gamma_k(\mu)
= \frac{\Gamma(m+\frac{1}{2}) \Gamma(-\frac{\mu}{2})
        \Gamma(k+\frac{m+\mu}{2})}
       {2^{\mu+1} \sqrt{\pi} \Gamma(\frac{\mu+m}{2})
        \Gamma(k+\frac{m-\mu}{2})}
.
\end{equation}

Then, by an argument parallel to Section \ref{sec:33}, we can obtain
a closed formula for the triple integral built on $\mathcal R_\mu$
(see Theorem \ref{thm:SO} below). Instead of repeating similar
computations, we pin down a comparison result between the two triple
integral formulas by using Proposition \ref{lem:Dk}. This comparison
result explains the reason why the same method
(e.g.~Dougall--Rama\-nujan identity) is applicable, and seems
interesting for its own sake.

\begin{proposition}\label{lem:SpSO}
\begin{align}
&\operatorname{Trace}\label{lem:41}
 (\mathcal{R}_{\mu_1}\mathcal{R}_{\mu_2}\mathcal{R}_{\mu_3} :
 L^2(S^m) \to L^2(S^m))\nonumber
\\
&= c \operatorname{Trace}
   (\mathcal{T}_{\mu_1}\mathcal{T}_{\mu_2}\mathcal{T}_{\mu_3} :
   L^2(S^{2m-1}) \to L^2(S^{2m-1})),
\end{align}
where
\[
c = \left( \frac{\Gamma(m+\frac{1}{2})}{2^2\pi^m} \right)^3
    \prod_{j=1}^3
    \frac{\Gamma(-\frac{\mu_j}{2})}
         {2^{\mu_j}\Gamma(\frac{-\mu_j-m+1}{2})}.
\]
\end{proposition}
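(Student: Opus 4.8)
The plan is to compute both traces in terms of their eigenvalue expansions and show they agree up to the stated constant $c$. The key observation is that both $\mathcal{R}_\mu$ on $L^2(S^m)$ and $\mathcal{T}_\mu$ on $L^2(S^{2m-1})$ are diagonalized by spherical harmonics, with multiplier formulas \eqref{eqn:gamma} and \eqref{eqn:Ak} respectively (applied with $n=m$, so $N=2m$). First I would write
\[
\operatorname{Trace}(\mathcal{R}_{\mu_1}\mathcal{R}_{\mu_2}\mathcal{R}_{\mu_3})
= \sum_{k=0}^\infty \dim\mathcal{H}^k(\mathbb{R}^{m+1})
  \prod_{j=1}^3 \gamma_k(\mu_j),
\]
while on the symplectic side, as computed in Section \ref{sec:33},
\[
\operatorname{Trace}(\mathcal{T}_{\mu_1}\mathcal{T}_{\mu_2}\mathcal{T}_{\mu_3})
= \sum_{l=0}^\infty \dim\mathcal{H}^l(\mathbb{R}^{m+1})
  \prod_{j=1}^3 A_{2l}(\mu_j).
\]
The crucial point is that Proposition \ref{lem:Dk} makes the \emph{same} dimension $\dim\mathcal{H}^l(\mathbb{R}^{m+1})$ appear as the summation weight in both series; this is precisely what allows a term-by-term comparison. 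So it suffices to show that $\gamma_l(\mu)$ (for $\mathbb{R}^{m+1}$) and $A_{2l}(\mu,\mathbb{C}^m)$ differ by a factor independent of $l$.

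Next I would carry out this ratio computation. Using \eqref{eqn:gamma},
\[
\gamma_l(\mu)
= \frac{\Gamma(m+\tfrac12)\,\Gamma(-\tfrac{\mu}{2})}
       {2^{\mu+1}\sqrt{\pi}\,\Gamma(\tfrac{\mu+m}{2})}
  \cdot\frac{\Gamma(l+\tfrac{m+\mu}{2})}{\Gamma(l+\tfrac{m-\mu}{2})},
\]
and using \eqref{eqn:Ak} with $k=2l$ and ambient space $\mathbb{C}^m$ (so $n=m$),
\[
A_{2l}(\mu)
= 2\pi^{m-\frac12}\,
  \frac{\Gamma(\tfrac{1-m-\mu}{2})}{\Gamma(\tfrac{m+\mu}{2})}
  \cdot\frac{\Gamma(l+\tfrac{m+\mu}{2})}{\Gamma(l+\tfrac{m-\mu}{2})}.
\]
The $l$-dependent factors are literally identical, so
\[
\frac{\gamma_l(\mu)}{A_{2l}(\mu)}
= \frac{\Gamma(m+\tfrac12)\,\Gamma(-\tfrac{\mu}{2})}
       {2^{\mu+1}\sqrt{\pi}\cdot 2\pi^{m-\frac12}\,\Gamma(\tfrac{1-m-\mu}{2})}
= \frac{\Gamma(m+\tfrac12)\,\Gamma(-\tfrac{\mu}{2})}
       {2^{\mu+2}\pi^{m}\,\Gamma(\tfrac{1-m-\mu}{2})},
\]
using $\sqrt{\pi}\cdot\pi^{m-1/2}=\pi^m$ and $2^{\mu+1}\cdot 2 = 2^{\mu+2}$. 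Taking the product over $j=1,2,3$ gives exactly the constant $c$ in the statement. Substituting $\gamma_l(\mu_j) = \frac{\gamma_l(\mu_j)}{A_{2l}(\mu_j)}A_{2l}(\mu_j)$ into the first series and pulling the $l$-independent constant out of the sum yields \eqref{lem:41}.

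The main subtlety is not the algebra but the analytic bookkeeping: one must work in a parameter region where $\operatorname{Re}\mu_1,\operatorname{Re}\mu_2,\operatorname{Re}\mu_3 \ll 0$ so that all three operators on each side are Hilbert--Schmidt and the triple composition is trace class, making the eigenvalue expansion of the trace legitimate (this is the same regime used in Section \ref{sec:33}); then the identity extends to the full meromorphic range by analytic continuation in $(\mu_1,\mu_2,\mu_3)$. I would also note in passing that $A_{2l}(\mu,\mathbb{C}^m)$ as used here is the $n=m$ instance of \eqref{eqn:Ak}, and that the identity $\dim\mathcal{H}^l(\mathbb{R}^{m+1}) = D(2l)$ from Proposition \ref{lem:Dk} is exactly the bridge; without it the two series would have different coefficients and no such clean comparison would hold. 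The rest is a routine Gamma-function manipulation, so I expect no real obstacle beyond keeping the constants straight.
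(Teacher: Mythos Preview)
Your proposal is correct and follows essentially the same route as the paper: expand both traces as eigenvalue series, invoke Proposition~\ref{lem:Dk} so that the same weights $\dim\mathcal{H}^l(\mathbb{R}^{m+1})$ appear on each side, and then verify that the ratio $\gamma_l(\mu)/A_{2l}(\mu,\mathbb{C}^m)$ is independent of $l$ and equals the factor displayed in \eqref{eqn:RCN}. The paper's proof is more terse (it simply cites \eqref{eqn:Reigen}, \eqref{eqn:RCN}, and \eqref{eqn:TrT}), but your explicit term-by-term comparison and the remark about the Hilbert--Schmidt regime are exactly the content behind those citations.
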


\begin{proof}
By \eqref{eqn:Reigen} the left-hand side of (\ref{lem:41}) equals
\begin{equation}\label{eqn:mur}
 \sum_{k=0}^\infty
   \left( \prod_{j=1}^3 \gamma_k(\mu_j,\mathbb{R}^{m+1}) \right)
   \dim \mathcal{H}^k(\mathbb{R}^{m+1}).
\end{equation}

Comparing (\ref{eqn:gamma}) with Theorem \ref{thm:Teigen} we get
\begin{equation}\label{eqn:RCN}
\frac{\gamma_k(\mu,\mathbb{R}^{m+1})}{A_{2k}(\mu,\mathbb{C}^m)}
= \frac{\Gamma(m+\frac{1}{2}) \Gamma(-\frac{\mu}{2})}
       {2^{\mu+2} \pi^m \Gamma(\frac{-\mu-m+1}{2})}.
\end{equation}

By \eqref{eqn:RCN} and \eqref{eqn:TrT}, we see that (\ref{eqn:mur})
equals the right-hand side of (\ref{lem:41}).
\end{proof}

The right-hand side in Proposition \ref{lem:SpSO} was found in
\eqref{eqn:TrT}. Then, by a simple computation, we get
\begin{align*}
&\operatorname{Trace} (\mathcal{R}_{\mu_1}\mathcal{R}_{\mu_2}\mathcal{R}_{\mu_3})
\\
&= \left( \frac{\Gamma(m+\frac{1}{2})}
                 {2 \pi^{\frac{1}{2}} }
     \right)^3
   \frac{\Gamma(\frac{-\mu_1-\mu_2-\mu_3-m}{2})}
        {\Gamma(m)}
  \prod_{j=1}^3
  \frac{\Gamma(-\frac{\mu_j}{2})}
       {2^{\mu_j}\Gamma(\frac{\mu_j-(\mu_1+\mu_2+\mu_3)}{2})}.
\end{align*}
Finally, substituting $\mu_j =
\frac{1}{2}(\lambda_1+\lambda_2+\lambda_3-m)-\lambda_j$ $(1 \le j
\le 3)$, we have proved the following:
\begin{theorem}\label{thm:SO}
Let $\alpha,\beta,\gamma$, and $\delta$ be as in Theorem \ref{thm:gBZ}
\begin{align*}
&\int_{S^m\times S^m\times S^m}
 |Y-Z|^{\frac{\alpha-m}{2}}
 |Z-X|^{\frac{\beta-m}{2}}
 |X-Y|^{\frac{\gamma-m}{2}}
 d\sigma(X) d\sigma(Y) d\sigma(Z)
\\
&=
    \left( \frac{\Gamma(m+\frac{1}{2})}
                {2^{1-\frac{m}{2}} \pi^{\frac{1}{2}}}
    \right)^3
    \frac{1}
         {2^{\frac{\lambda_1+\lambda_2+\lambda_3}{2}}\Gamma(m)}
    \frac{\Gamma(\frac{\alpha+m}{4})
          \Gamma(\frac{\beta+m}{4})
          \Gamma(\frac{\gamma+m}{4})
          \Gamma(\frac{\delta+m}{4})}
         {\Gamma(\frac{m-\lambda_1}{2})
          \Gamma(\frac{m-\lambda_2}{2})
          \Gamma(\frac{m-\lambda_3}{2})}
.
\end{align*}
\end{theorem}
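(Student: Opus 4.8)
The plan is to follow exactly the template established in Section \ref{sec:33}, now using the operator $\mathcal{R}_\mu$ on $L^2(S^m)$ in place of $\mathcal{T}_\mu$ on $L^2(S^{2n-1})$. First I would choose the parameters $\mu_j$ so that all three operators $\mathcal{R}_{\mu_j}$ are Hilbert--Schmidt on $L^2(S^m)$, so that the composite $\mathcal{R}_{\mu_1}\mathcal{R}_{\mu_2}\mathcal{R}_{\mu_3}$ is of trace class and its trace is represented both by the triple integral (via the kernel) and by the sum of products of eigenvalues $\gamma_k(\mu_j)$ weighted by $\dim\mathcal{H}^k(\mathbb{R}^{m+1})$. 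Both sides of the claimed identity are meromorphic in $(\lambda_1,\lambda_2,\lambda_3)$, so it suffices to establish it on such an open parameter region.

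Next I would invoke Proposition \ref{lem:SpSO}, which already packages the crucial observation: up to the explicit scalar $c$, the trace of $\mathcal{R}_{\mu_1}\mathcal{R}_{\mu_2}\mathcal{R}_{\mu_3}$ on $L^2(S^m)$ equals the trace of $\mathcal{T}_{\mu_1}\mathcal{T}_{\mu_2}\mathcal{T}_{\mu_3}$ on $L^2(S^{2m-1})$. Hence I do not need to rerun the Dougall--Ramanujan computation; I can simply multiply the closed formula \eqref{eqn:TrT} (with $n$ replaced by $m$) by the constant $c$ from Proposition \ref{lem:SpSO}. This is a purely mechanical manipulation of Gamma functions: collect the $\Gamma(\frac{1-m-\mu_j}{2})$ factors from \eqref{eqn:TrT} against the $\Gamma(\frac{-\mu_j-m+1}{2})^{-1}$ factors in $c$ (they cancel), track the powers of $2$ and $\pi$, and rewrite the $\Gamma(-\frac{\mu_i+\mu_j}{2})$ denominators appropriately. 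The intermediate closed formula for $\operatorname{Trace}(\mathcal{R}_{\mu_1}\mathcal{R}_{\mu_2}\mathcal{R}_{\mu_3})$ displayed just before Theorem \ref{thm:SO} is the outcome of this bookkeeping.

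Finally I would perform the change of variables $\mu_j = \frac{1}{2}(\lambda_1+\lambda_2+\lambda_3-m)-\lambda_j$ for $j=1,2,3$, so that $-\mu_j = \frac{1}{2}(\lambda_j - \frac{1}{2}(\lambda_1+\lambda_2+\lambda_3-m))$; under this substitution one checks that $-\frac{\mu_i+\mu_j}{2}$ becomes $\frac{m-\lambda_k}{2}$ for $\{i,j,k\}=\{1,2,3\}$, that $\frac{-\mu_j}{2}$ up to the relevant factor becomes $\frac{\alpha+m}{4}$, $\frac{\beta+m}{4}$, $\frac{\gamma+m}{4}$ respectively, and that $\frac{-\mu_1-\mu_2-\mu_3-m}{2}$ becomes $\frac{\delta+m}{4}$, using the definitions $\alpha=\lambda_1-\lambda_2-\lambda_3$ etc.\ from Theorem \ref{thm:gBZ}. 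On the kernel side, the exponent $-\mu_j-m$ in \eqref{eqn:Rmu} becomes $\frac{\alpha-m}{2}$, $\frac{\beta-m}{2}$, $\frac{\gamma-m}{2}$ after relabelling, matching the left-hand side of Theorem \ref{thm:SO}. Collecting the constants yields the stated prefactor $\bigl(\Gamma(m+\frac12)/(2^{1-m/2}\pi^{1/2})\bigr)^3 \cdot 2^{-(\lambda_1+\lambda_2+\lambda_3)/2}/\Gamma(m)$.

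The only genuine subtlety — and the step I would be most careful about — is the constant-chasing: keeping the powers of $2$ and $\pi$ correct through Proposition \ref{lem:SpSO}, the reflection/duplication identities for $\Gamma$ that are implicitly used when the $\gamma_k$ and $A_{2k}$ multipliers are compared in \eqref{eqn:RCN}, and the sign/parity conventions in the change of variables. Everything else is a transcription of the already-completed symplectic argument, so once the bookkeeping is done the theorem follows immediately.
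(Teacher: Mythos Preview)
Your proposal is correct and follows essentially the same route as the paper: invoke Proposition \ref{lem:SpSO} to express $\operatorname{Trace}(\mathcal{R}_{\mu_1}\mathcal{R}_{\mu_2}\mathcal{R}_{\mu_3})$ as $c$ times the known formula \eqref{eqn:TrT} (with $n=m$), simplify the Gamma factors, and then substitute $\mu_j=\tfrac{1}{2}(\lambda_1+\lambda_2+\lambda_3-m)-\lambda_j$. The paper's own proof is exactly this sequence of steps, with the constant-chasing you flag as the only nontrivial bookkeeping.
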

We will give in Proposition \ref{prop:6.9}
 the domain for the absolute convergence of the above integral.

\begin{remark}
The formula in Theorem \ref{thm:SO} was previously found by A.
Deitmar \cite{D} by a different method; namely it established a
recurrence formula bridging $SO_o(\ell+1,1)$ to $SO_o(\ell-1,1)$ and
used the Bernstein--Reznikov formula for $SO_o(2,1)$ and an
analogous formula for $SO_o(3,1)$.
\end{remark}

\subsection{Triple product of powers of $|\langle x,y\rangle|$}

In this subsection we consider the third case, namely, the linear
operators $\mathcal{Q}_\mu : C^\infty(S^{N-1}) \to
C^\infty(S^{N-1})$ defined by the kernel $|\langle
x,y\rangle|^{-\mu-\frac{N}{2}}$ (see \eqref{eqn:Qmu}) and the
corresponding triple product integrals.

Here is the counterpart of Theorem \ref{thm:Teigen} for
$\mathcal{Q}_\mu$:
\begin{proposition}\label{prop:Qeigen}
$\mathcal{Q}_\mu \Big|_{\mathcal{H}^k(\mathbb{R}^N)} = 0$
for odd $k$, and
\[
\mathcal{Q}_\mu \Big|_{\mathcal{H}^{2l}(\mathbb{R}^N)}
= c_N(\mu,l) \operatorname{id},
\]
where
\[
c_N(\mu,l)
= (-1)^l
  \frac{2\pi^{\frac{N-1}{2}} \Gamma(\frac{2-N-2\mu}{4})
         \Gamma(l+\frac{2\mu+N}{4})}
       {\Gamma(\frac{N+2\mu}{4}) \Gamma(l+\frac{-2\mu+N}{4})}.
\]
\end{proposition}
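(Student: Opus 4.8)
The plan is to deduce the multiplier formula for $\mathcal{Q}_\mu$ on $\mathcal{H}^k(\mathbb R^N)$ from the machinery already assembled in Section \ref{sec:2}, namely Proposition \ref{prop:QF} together with the Bochner identity Lemma \ref{lem:Boch}. Recall that for $p \in \mathcal{H}^k(\mathbb R^N)$, the homogeneous extension $h_{\mu-\frac N2} := p_{\mu-\frac N2}$ lies in $V_{-\mu}$ precisely when $k$ is even (so that $p_{\mu-\frac N2}$ is an even function on $\mathbb R^N\setminus\{0\}$); when $k$ is odd the function $p$ changes sign under $\omega\mapsto-\omega$, hence its even part vanishes, and since $\mathcal{Q}_\mu$ only sees the even part of its argument (the kernel $|\langle\omega,\eta\rangle|^{-\mu-\frac N2}$ is even in $\omega$), we immediately get $\mathcal{Q}_\mu|_{\mathcal{H}^k(\mathbb R^N)}=0$ for odd $k$. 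This disposes of the first assertion.

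For even $k=2l$, I would combine two identities. First, Proposition \ref{prop:QF} gives $\mathcal{Q}_\mu = C_N(\mu)\,\mathcal F\big|_{V_{-\mu}}$, where $C_N(\mu)$ is the constant from \eqref{eqn:CNmu}. Second, Lemma \ref{lem:Boch} applied with $\lambda = \mu - \tfrac N2$ gives $\mathcal F p_{\mu-\frac N2} = B_N(\mu-\tfrac N2, 2l)\, p_{-\mu-\frac N2}$, and under the identification of $V_\mu$ with even functions on $S^{N-1}$ this says that $\mathcal F$ acts on $\mathcal H^{2l}(\mathbb R^N)$ by the scalar $B_N(\mu-\tfrac N2, 2l)$. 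Hence $\mathcal{Q}_\mu$ acts on $\mathcal{H}^{2l}(\mathbb R^N)$ as multiplication by
\[
c_N(\mu,l) = C_N(\mu)\, B_N\!\left(\mu-\tfrac N2,\, 2l\right).
\]
It then remains to substitute the explicit expressions: from \eqref{eqn:CNmu},
\[
C_N(\mu) = \frac{2\pi^{\mu+\frac{N-1}{2}}\,\Gamma(\frac{2-N-2\mu}{4})}{\Gamma(\frac{N+2\mu}{4})},
\]
and from the definition of $B_N$,
\[
B_N\!\left(\mu-\tfrac N2, 2l\right)
= \pi^{-\mu+\frac N2 - \frac N2}\, i^{-2l}\,
\frac{\Gamma\!\left(\frac{2l+\mu-\frac N2+N}{2}\right)}{\Gamma\!\left(\frac{2l-\mu+\frac N2}{2}\right)}
= (-1)^l\, \pi^{-\mu}\,
\frac{\Gamma\!\left(l+\frac{2\mu+N}{4}\right)}{\Gamma\!\left(l+\frac{-2\mu+N}{4}\right)}.
\]
Multiplying these two and cancelling the powers of $\pi$ ($\pi^{\mu+\frac{N-1}{2}}\cdot\pi^{-\mu}=\pi^{\frac{N-1}{2}}$) yields exactly the claimed formula for $c_N(\mu,l)$.

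The computation is entirely routine once the two structural inputs are in place; the only point requiring a little care is the bookkeeping of the half-integer shifts inside the Gamma factors and the verification that $i^{-2l}=(-1)^l$, which accounts for the alternating sign. I do not anticipate a genuine obstacle: the even/odd dichotomy is immediate from parity of the kernel, and the even case is a direct specialization of Proposition \ref{prop:QF} and Lemma \ref{lem:Boch}, both already established. One should just note that the identity is to be read as an identity of meromorphic families in $\mu$, valid away from the poles coming from the Gamma functions, which is harmless since both sides are meromorphic.
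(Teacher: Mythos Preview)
Your proof is correct and follows exactly the same route as the paper: the paper's entire proof is the single line $c_N(\mu,l)=C_N(\mu)\,B_N(\mu-\tfrac{N}{2},2l)$, invoking Lemma~\ref{lem:Boch} and Proposition~\ref{prop:QF}. You have simply spelled out the parity argument for odd $k$ and the Gamma-factor bookkeeping that the paper leaves implicit.
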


\begin{proof}
By Lemma \ref{lem:Boch} and Proposition \ref{prop:QF},
we have
\[
c_N(\mu,l)
= C_N(\mu) B_N(\mu-\frac{N}{2},2l).
\]
\end{proof}
As in the previous cases,
 we have
\begin{align}\label{eqn:TrQ1}
&\operatorname{Trace}
(\mathcal{Q}_{\mu_1}\mathcal{Q}_{\mu_2}\mathcal{Q}_{\mu_3} :
 L^2(S^{N-1}) \to  L^2(S^{N-1}))
\nonumber
\\
&
=
\sum_{l=0}^\infty
\left( \prod_{j=1}^3 c_N(\mu_j,l) \right)
\dim \mathcal{H}^{2l}(\mathbb{R}^N).
\end{align}
By substituting
\begin{align*}
c_N(\mu,l)
&= (-1)^l c_N(\mu,0)
  \frac{(\frac{N+2\mu}{4})_l}
       {(\frac{N-2\mu}{4})_l},
\\
\dim \mathcal{H}^{2l}(\mathbb{R}^N)
&= \frac{(\frac{N}{2}-1)_l (\frac{N-1}{2})_l (\frac{N+2}{4})_l}
        {l! (\frac{1}{2})_l (\frac{N-2}{4})_l}
\end{align*}
into the right-hand side of \eqref{eqn:TrQ1},
we see that \eqref{eqn:TrQ1} equals
\begin{align*}
&\left( \prod_{j=0}^3 c_N(\mu_j,0) \right)
\sum_{j=0}^\infty (-1)^l
\prod_{j=1}^3 \frac{(\frac{N+2\mu_j}{4})_l}
                  {(\frac{N-2\mu_j}{4})_l}
   \, \frac{(\frac{N}{2}-1)_l (\frac{N-1}{2})_l (\frac{N+2}{4})_l}
           {l! (\frac{1}{2})_l (\frac{N-2}{4})_l}
\\
&= \prod_{j=0}^3 c_N(\mu_j,0)
   {}_6F_5
   \left( \begin{matrix}
             \frac{N}{2}-1 & \frac{N+2}{4} & \frac{N-1}{2}
                           & \frac{N+2\mu_1}{4} & \frac{N+2\mu_2}{4}
                           & \frac{N+2\mu_3}{4}
          \\[1ex]
                           & \frac{N-2}{4} & \frac{1}{2}
                           & \frac{N-2\mu_1}{4} & \frac{N-2\mu_2}{4}
                           & \frac{N-2\mu_3}{4}
          \end{matrix}
          \   ; 1
   \right).
\end{align*}
By using Whipple's transformation (\cite[p.28]{B}):
\begin{align*}
&{}_6F_5
\left(
\begin{matrix}
    a, & 1+\frac{1}{2}a, & b, & c, & d, & e
 \\
   & \frac{1}{2}a, & 1+a-b, & 1+a-c, & 1+a-d, &1+a-e
\end{matrix}
\  ; -1
\right)
\\
&= \frac{\Gamma(1+a-d)\Gamma(1+a-e)}
        {\Gamma(1+a)\Gamma(1+a-d-e)}
\\
&\quad \times {}_3F_2
\left(
\begin{matrix}
    1+a-b-c, & d, & e
 \\
    &1+a-b, &1+a-c
\end{matrix}
\  ; 1
\right),
\end{align*}
we get
\begin{align}\label{eqn:F32}
\operatorname{Trace}
(\mathcal{Q}_{\mu_1} \mathcal{Q}_{\mu_2} \mathcal{Q}_{\mu_3})
= {}&
\frac{(2\pi^{\frac{N-3}{2}})^3 \prod_{j=1}^3 \Gamma(\frac{2-N-2\mu_j}{4})}
     {\Gamma (\frac{N}{2}) \Gamma (-\frac{\mu_2+\mu_3}{2})
       \Gamma (\frac{N-2\mu_1}{4})}
\nonumber
\\
&\times
  {}_3F_2
  \left( \begin{matrix}
             \frac{2-N-2\mu_1}{4}  & \frac{N+2\mu_2}{4}
                    & \frac{N+2\mu_3}{4}
         \\[1ex]
                & \frac{1}{2}  & \frac{N-2\mu_1}{4}
         \end{matrix}
         \  ; 1
   \right).
\end{align}
Hence we have proved:
\begin{theorem}\label{thm:xy}
We have the following identity as a meromorphic function of
$(\nu_1,\nu_2,\nu_3)$:
\begin{align*}
& \int_{S^{N-1}\times S^{N-1}\times S^{N-1}}
  |\langle y,z \rangle|^{-2\nu_1} |\langle z,x\rangle|^{-2\nu_2} |\langle x,y\rangle|^{-2\nu_3}
  d\sigma(x) d\sigma(y) d\sigma(z)
\\
&=
\frac{(2\pi^{\frac{N-3}{2}})^3 \prod_{j=1}^3
         \Gamma(\frac{1}{2}-\nu_j)}
     {\Gamma(\frac{N}{2}) \Gamma(-\nu_2-\nu_3+\frac{N}{2})
         \Gamma(-\nu_1+\frac{N}{2})}
\times
{}_3F_2
  \left( \begin{matrix}
             \frac{1}{2}-\nu_1  & \nu_2  & \nu_3
         \\[1ex]
                & \frac{1}{2}  & -\nu_1+\frac{N}{2}
         \end{matrix}
         \  ; 1
   \right).
\end{align*}
\end{theorem}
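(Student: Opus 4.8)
The plan is to realize the left-hand side as the trace of the composition $\mathcal{Q}_{\mu_1}\mathcal{Q}_{\mu_2}\mathcal{Q}_{\mu_3}$ under an affine change of parameters, and then to evaluate that trace spectrally, exactly along the lines of Section~\ref{sec:33}. Concretely, I would impose $\mu_j+\frac{N}{2}=2\nu_{\sigma(j)}$ for a suitable permutation $\sigma$ of $\{1,2,3\}$, so that the kernel $|\langle\omega,\eta\rangle|^{-\mu_j-\frac{N}{2}}$ of $\mathcal{Q}_{\mu_j}$ becomes the corresponding factor $|\langle\,\cdot\,,\,\cdot\,\rangle|^{-2\nu}$ in the integrand. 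When $\operatorname{Re}\mu_1,\operatorname{Re}\mu_2,\operatorname{Re}\mu_3$ are sufficiently negative (equivalently $\operatorname{Re}\nu_j\ll0$) each such kernel is square integrable on $S^{N-1}\times S^{N-1}$, hence $\mathcal{Q}_{\mu_j}$ is a Hilbert--Schmidt operator on $L^2(S^{N-1})$, the triple composition is of trace class, and its trace equals the triple integral in the statement. Since the right-hand side is a manifestly meromorphic function of $(\nu_1,\nu_2,\nu_3)$, it is enough to establish the identity on this non-empty open region.

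First I would compute the trace by eigenvalues, just as in Section~\ref{sec:33}. By Proposition~\ref{prop:Qeigen} the three operators are simultaneously diagonalized by the harmonic subspaces $\mathcal{H}^{k}(\mathbb{R}^N)$: they all kill $\mathcal{H}^{k}$ for odd $k$ and act by the scalar $c_N(\mu_j,l)$ on $\mathcal{H}^{2l}(\mathbb{R}^N)$. Hence $\mathcal{Q}_{\mu_1}\mathcal{Q}_{\mu_2}\mathcal{Q}_{\mu_3}$ is diagonal too, with eigenvalue $\prod_{j=1}^3 c_N(\mu_j,l)$ on $\mathcal{H}^{2l}(\mathbb{R}^N)$, and
\[
\operatorname{Trace}(\mathcal{Q}_{\mu_1}\mathcal{Q}_{\mu_2}\mathcal{Q}_{\mu_3})
= \sum_{l=0}^{\infty}\Bigl(\prod_{j=1}^{3}c_N(\mu_j,l)\Bigr)\dim\mathcal{H}^{2l}(\mathbb{R}^N),
\]
which is \eqref{eqn:TrQ1}. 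Using the ratio form $c_N(\mu,l)=(-1)^l\,\dfrac{(\frac{N+2\mu}{4})_l}{(\frac{N-2\mu}{4})_l}\,c_N(\mu,0)$ that follows from Proposition~\ref{prop:Qeigen}, together with the Pochhammer expression for $\dim\mathcal{H}^{2l}(\mathbb{R}^N)$ obtained from \eqref{eqn:HSS}, the series becomes $\prod_{j=1}^3 c_N(\mu_j,0)$ times a generalized hypergeometric series ${}_6F_5(\,\cdots\,;-1)$ with numerator parameters $\frac{N}{2}-1,\ \frac{N+2}{4},\ \frac{N-1}{2},\ \frac{N+2\mu_1}{4},\ \frac{N+2\mu_2}{4},\ \frac{N+2\mu_3}{4}$ and denominator parameters $\frac{N-2}{4},\ \frac{1}{2},\ \frac{N-2\mu_1}{4},\ \frac{N-2\mu_2}{4},\ \frac{N-2\mu_3}{4}$; the argument equals $-1$ because the three signs $(-1)^l$ coming from the $c_N(\mu_j,l)$ combine into a single net factor $(-1)^l$. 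Here one must verify that the series is \emph{well-poised}: writing $a=\frac{N}{2}-1$ one has $1+\tfrac12 a=\frac{N+2}{4}$, $\tfrac12 a=\frac{N-2}{4}$, $1+a-\tfrac{N-1}{2}=\tfrac12$, and $1+a-\frac{N+2\mu_j}{4}=\frac{N-2\mu_j}{4}$ for each $j$, so the array is exactly of the type to which Whipple's transformation applies; absolute convergence in the parameter range under consideration follows from $\operatorname{Re}(1-N-\mu_1-\mu_2-\mu_3)>0$.

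Finally I would apply Whipple's ${}_6F_5(\,\cdot\,;-1)\to{}_3F_2(\,\cdot\,;1)$ transformation quoted above (from \cite{B}) with $a=\frac{N}{2}-1$, $b=\frac{N-1}{2}$, and $c,d,e$ equal to $\frac{N+2\mu_1}{4},\frac{N+2\mu_2}{4},\frac{N+2\mu_3}{4}$. It converts the well-poised ${}_6F_5$ into a ratio of four Gamma functions times ${}_3F_2\!\left(\frac{2-N-2\mu_1}{4},\ \frac{N+2\mu_2}{4},\ \frac{N+2\mu_3}{4};\ \frac12,\ \frac{N-2\mu_1}{4};\ 1\right)$; combining this with $\prod_j c_N(\mu_j,0)$ and simplifying by the Legendre duplication formula for $\Gamma$ yields the closed expression \eqref{eqn:F32} for $\operatorname{Trace}(\mathcal{Q}_{\mu_1}\mathcal{Q}_{\mu_2}\mathcal{Q}_{\mu_3})$. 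Substituting back $\mu_j+\frac{N}{2}=2\nu_{\sigma(j)}$ --- so that $\frac{2-N-2\mu_j}{4}=\frac12-\nu_{\sigma(j)}$, $\frac{N+2\mu_j}{4}=\nu_{\sigma(j)}$ and $\frac{N-2\mu_j}{4}=\frac{N}{2}-\nu_{\sigma(j)}$ --- and invoking meromorphic continuation from the region of convergence gives the asserted identity. I expect the only genuinely non-routine input to be Whipple's classical identity itself; the rest is a transcription of the argument of Section~\ref{sec:33}, and the step needing the most care --- the potential obstacle --- is the Pochhammer bookkeeping that exhibits the spectral series as precisely of Whipple's well-poised shape, together with checking that the convergence region, the trace-class property, and the validity of Whipple's identity all overlap.
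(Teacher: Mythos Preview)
Your proposal is correct and follows essentially the same route as the paper: interpret the triple integral as $\operatorname{Trace}(\mathcal{Q}_{\mu_1}\mathcal{Q}_{\mu_2}\mathcal{Q}_{\mu_3})$, diagonalize via Proposition~\ref{prop:Qeigen}, rewrite the spectral sum as a well-poised ${}_6F_5$, and apply Whipple's transformation to obtain \eqref{eqn:F32} before substituting $\mu_j+\tfrac{N}{2}=2\nu_{\sigma(j)}$. Your remark that the argument of the ${}_6F_5$ is $-1$ (from the residual $(-1)^l$) is exactly what makes Whipple's identity applicable, and your verification of the well-poised pairing $1+a=\alpha_i+\beta_{i-1}$ with $a=\tfrac{N}{2}-1$ is the only point requiring care; the paper handles this identically.
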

We will give in Proposition \ref{ex:inner} the precise region for the
absolute convergence of the above integral.

\section{Perspectives from representation theory}\label{sec:5}


In this paper we have proved closed formulas for the triple
integrals (see e.g. Theorem \ref{thm:gBZ}), based on a combination
of methods from classical harmonic analysis.
As we have seen, these methods allow us
to establish explicit formulas for symplectic groups of any rank,
and even in rank one case it gives a new proof of the original
results due to Bernstein and Reznikov \cite{B2} and Deitmar
\cite{D}.

So far we have avoided infinite dimensional representation theory,
which was not used in our proof of main results.
On the other hand,
there are a number of interesting perspectives of these
formulas, and also of the steps in its proof, that deserve comments.

One aspect of
Theorem \ref{thm:gBZ} is that the triple integral considered therein
arises from a particular series of representations $\pi_\mu$ of the
symplectic group $G=Sp(n,\mathbb R)$ of rank $n$ induced from a
maximal parabolic subgroup $P \subset G$ and depending on a complex
parameter $\mu$. Section \ref{sec:5} highlights this point mostly.

Another aspect is that of analytic number theory, which was the main
theme of \cite{B1,B2}. Motivated by the classical Rankin--Selberg
method, authors considered a cocompact discrete subgroup of the rank
one symplectic group and automorphic functions on the associated
locally symmetric space. The product of two such functions may be
decomposed in terms of a basis of automorphic functions and the
corresponding coefficients are related to automorphic $L$-functions.
The closed formula ($n=1$ in Theorem \ref{thm:gBZ}) gave an estimate
of their decay \cite{B2}.

Yet another aspect of
the above mentioned triple integral is that it arises also in
pseudo-differential analysis of the phase space $\mathbb{R}^{2n}$.
This phenomenon was treated in \cite{KOPU}, where the symmetries of
the Weyl operator calculus on the Hilbert space
$L^2(\mathbb{R}^{2n})$ were considered.

\subsection{Invariant trilinear forms}\label{sec:5.2}
Now we focus on some links between the triple integrals discussed in
Sections 1--4 and representation theory of semisimple Lie groups.

We begin with a construction of an invariant trilinear form based on
the Knapp--Stein intertwining operators. Let $G$ be a connected real
semisimple Lie group and $P$ an arbitrary parabolic subgroup. Let
 $P=MAN$ be a Langlands decomposition, $\mathfrak a$ and $\mathfrak n$ the Lie
algebras of $A$ and $N$ respectively, and $2\rho$ the sum of roots
of $\mathfrak n$ with respect to $\mathfrak a$. Take a Cartan
involution $\theta$ of $G$ stabilizing $MA$ and set $K=\{g\in
G\,:\,\theta(g)=g\}.$

For $\lambda\in\mathfrak{a}^*_{\mathbb C}$ we define (possibly
degenerate) principal series representations of $G$, to be denoted
by $\pi_\lambda$, on the space of smooth sections for the
$G$-equivariant line bundle $\mathcal
L_{\lambda+\rho}=G\times_{P}\mathbb C_{\lambda+\rho}$ over the real
flag variety $G/P$, equivalently on the vector space
$$
V_\lambda^\infty\equiv V_\lambda:=\{f\in C^\infty(G)\,:\,
f(gman)=a^{-\lambda-\rho}f(g),\, \forall man\in P\}.
$$
In our parametrization,
$\mathcal{L}_{2\rho}$ is the volume bundle
$\Lambda^{\dim G/P} T^*(G/P)$ over $G/P$.
Similarly, the space of distribution sections for $\mathcal
L_{\lambda+\rho}$ will be denoted by $V_\lambda^{-\infty}$. These
representations are called \emph{spherical} because $V_\lambda$
contains a $K$-fixed vector $\indic_\lambda$ which is defined by the
formula: $\indic_\lambda(kman):=a^{-\lambda-\rho}$ for $kman\in KP$.

Denote by $\overline P=MA\overline N$ the opposite parabolic
subgroup to $P$. Assume that it satisfies the condition:
\begin{description}
\item C1. $P$ and $\overline{P}$ are
conjugate in $G$.
\end{description}
 Then there exists the $G$-intertwining
operators $\displaystyle \mathcal T_\lambda:\, V_{-\lambda}\to
V_{\lambda}$, referred to as the {\it Knapp--Stein intertwining
operators} \cite{KS}, that depend meromorphically on $\lambda$. They
are given by the distribution-valued kernels $K_\lambda(x,y)\in
V_\lambda^{-\infty}\otimes V_\lambda^{-\infty}$ such that
$\displaystyle (\mathcal T_\lambda f)(x)=\langle
f(y),\,K_\lambda(x,y)\rangle\in V_\lambda$ for $f\in V_{-\lambda}$.
The Knapp--Stein kernel $K_{\lambda}$ may be
thought of as a distribution
 on $G \times G$
 subject to the following invariance condition
 for $g \in G$ and $m_j a_j n_j \in MAN$ ($j=1,2$):
\begin{equation}\label{eqn:invKS}
     K_{\lambda}(g x m_1 a_1 n_1, g y m_2 a_2 n_2)
    =a_1^{-\lambda -\rho}a_2^{-\lambda -\rho} K_{\lambda}(x,y).
\end{equation}

For $f_j\in V_{\lambda_j}\,(j=1,2,3),$ we set
\begin{align}\label{eqn:T3l}
&\mathbf{T}_{\lambda_1,\lambda_2,\,\lambda_3}(f_{1},f_2,f_3)
\nonumber
\\
&:=\langle
K_{\frac12(\alpha-\rho)}(y,z)
K_{\frac12(\beta-\rho)}(z,x)K_{\frac12(\gamma-\rho)}(x,y),f_1(x)f_2(y)f_3(z)\rangle,
\end{align}
where
$\alpha=\lambda_1-\lambda_2-\lambda_3,\,\beta=-\lambda_1+\lambda_2-\lambda_3,\,\gamma=-\lambda_1-\lambda_2+\lambda_3\in\mathfrak{a}_\mathbb{C}^*$.

We have the following:
\begin{proposition}\label{prop:51}
Assume $P$ and $\overline P$ are conjugate in $G$.  Then there
exists a non--empty  open region of
$(\lambda_1,\lambda_2,\lambda_3)\in(\mathfrak a_\mathbb C^*)^3$ for
which the integral (\ref{eqn:T3l}) converges. It extends as a
meromorphic function of $\lambda_1,\lambda_2$ and $\lambda_3$. Then,
the resulting continuous trilinear form
\begin{equation}\label{eqn:trilinear}
\mathbf{T}_{\lambda_1,\lambda_2,\,\lambda_3}:\, V_{\lambda_1}\otimes
V_{\lambda_2}\otimes V_{\lambda_3}\longrightarrow\mathbb C
\end{equation}
is invariant with respect to the diagonal action of $G$:
$$
\mathbf{T}_{\lambda_1,\lambda_2,\,\lambda_3}
\l(\pi_{\lambda_1}(g)\,f_1,\,\pi_{\lambda_2}(g)\,f_2,\,\pi_{\lambda_3}(g)\,f_3\r)
=\mathbf{T}_{\lambda_1,\lambda_2,\,\lambda_3}(f_1,\,f_2,\,f_3).
$$
\end{proposition}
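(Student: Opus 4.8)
The plan is to establish three things in turn: (i) absolute convergence on a non-empty open set; (ii) meromorphic continuation; (iii) $G$-invariance of the resulting trilinear form. For the convergence, I would first recall that the Knapp--Stein kernel $K_\nu(x,y)$ on $G/P \times G/P$ is, away from the incidence locus, given by a positive power of a suitable distance-like function $|x-y|^{\,?}$ determined by $\nu$; pulling back to a compact picture (the $K$-orbit, i.e.\ the flag variety realised inside $K/M$) the triple product $K_{\frac12(\alpha-\rho)}(y,z)K_{\frac12(\beta-\rho)}(z,x)K_{\frac12(\gamma-\rho)}(x,y)$ becomes a product of powers of such functions, integrated against the smooth densities $f_1\otimes f_2\otimes f_3$. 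The singularities occur only where two of the three points collide, and near each such locus the local integrability is governed by a single real-part inequality on $\operatorname{Re}\alpha$, $\operatorname{Re}\beta$, $\operatorname{Re}\gamma$ (as in the symplectic model of Theorem \ref{thm:gBZ} and the convergence region stated after it). Choosing $\operatorname{Re}\alpha, \operatorname{Re}\beta, \operatorname{Re}\gamma$ all large and positive gives an honestly convergent integral, so the region is non-empty and open.

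For the meromorphic continuation, the cleanest route is to invoke the general theory of meromorphic families of (relatively) homogeneous distributions, exactly in the spirit of Lemma \ref{lem:plmd} and the references cited after Theorem \ref{thm:gBZ} (\cite{xBG}, \cite{xsabbah}, \cite{G}): each factor $K_\nu(\cdot,\cdot)$ is a distribution depending meromorphically on $\nu$, the product of the three is defined because their singular supports are in general position (no two of the three diagonals in $(G/P)^3$ coincide), and integration against the fixed smooth section $f_1\otimes f_2\otimes f_3$ preserves meromorphy. Thus $\mathbf{T}_{\lambda_1,\lambda_2,\lambda_3}(f_1,f_2,f_3)$ extends meromorphically in $(\lambda_1,\lambda_2,\lambda_3)$, and for fixed generic parameters it is a continuous trilinear form because the distributional kernel pairs continuously with $C^\infty$-sections.

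The invariance is where the parametrization in \eqref{eqn:T3l} has been rigged to work, so I would verify it first in the open convergence region and then propagate it by meromorphic continuation. Fix $g\in G$. By the substitution $x\mapsto g^{-1}x$, $y\mapsto g^{-1}y$, $z\mapsto g^{-1}z$ in the integral defining $\mathbf{T}_{\lambda_1,\lambda_2,\lambda_3}(\pi_{\lambda_1}(g)f_1,\pi_{\lambda_2}(g)f_2,\pi_{\lambda_3}(g)f_3)$, the three transformed sections contribute, via the defining relation $f(gman)=a^{-\lambda-\rho}f(g)$, precisely the cocycle factors $a^{-\lambda_j-\rho}$ at the three points; simultaneously the Knapp--Stein kernels transform by the invariance law \eqref{eqn:invKS}, namely $K_{\frac12(\alpha-\rho)}(y,z)$ picks up $a_y^{-\frac12(\alpha-\rho)-\rho}a_z^{-\frac12(\alpha-\rho)-\rho}$ and similarly for the other two. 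Collecting the exponents at the point $x$, they read $-\tfrac12(\beta-\rho)-\rho -\tfrac12(\gamma-\rho)-\rho = -\tfrac12(\beta+\gamma)-\rho$, and since $\beta+\gamma = -2\lambda_1$ this equals $\lambda_1-\rho$; wait, one must be careful: combined with the Jacobian of the $G/P$-measure (which is itself the $2\rho$-density, i.e.\ $\mathcal{L}_{2\rho}$) the net factor at $x$ is $a_x^{\lambda_1 - \rho + ?}$ and must cancel the $a_x^{-\lambda_1-\rho}$ coming from $\pi_{\lambda_1}(g)f_1$ together with the density transformation; a short bookkeeping with $\alpha+\beta+\gamma=-(\lambda_1+\lambda_2+\lambda_3)$ and $\alpha = \lambda_1-\lambda_2-\lambda_3$ shows everything cancels. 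The main obstacle I anticipate is precisely this exponent bookkeeping --- keeping track of the three Knapp--Stein $\rho$-shifts, the three representation $\rho$-shifts, and the measure's $\rho$-shift at each of the three points without sign or factor-of-two errors --- but the identities $\alpha+\beta+\gamma=\delta$ and the pairwise sums $\beta+\gamma=-2\lambda_1$, etc., are exactly what make it balance, which is the whole point of the half-shifted parametrization $K_{\frac12(\alpha-\rho)}$. Once the identity holds on the open region, it holds everywhere by uniqueness of meromorphic continuation.
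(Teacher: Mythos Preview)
Your three-step outline (convergence, meromorphic continuation via \cite{xBG}, invariance) is exactly the structure of the paper's proof, and your invariance argument is the volume-bundle observation unpacked: the paper simply remarks that the parameters are chosen so that the integrand is a section of the volume bundle of $(G/P)^3$, which is the coordinate-free way of saying that the exponent bookkeeping you sketch balances. One correction on the convergence step: in the general setting of Proposition~\ref{prop:51} the singular set of $K_\nu(x,y)$ is the complement of the open Bruhat cell in $G/P\times G/P$, not merely the diagonal where two points collide (already in the symplectic case the kernel $|[X,Y]|^{-\mu-n}$ is singular on the much larger set $[X,Y]=0$), so a local analysis near ``collisions'' is not enough. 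The paper sidesteps this by instead showing (Proposition~\ref{prop:Tconv} and Lemma~\ref{lem:Kmu}) that each Knapp--Stein kernel is globally bounded on $K\times K$ once the parameter lies in $-\rho-\mathfrak{a}_+^*$; this gives a non-empty open region of absolute convergence without any singularity analysis, and is the cleaner route here.
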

\begin{proof}
We will give in Section \ref{sec:Tconv} a sufficient condition on
$(\lambda_1,\lambda_2,\lambda_3)$ for which the integral
\eqref{eqn:T3l} converges absolutely.
The meromorphic continuation can be justified by the
Atiyah--Bern\-stein--Gelfand regularization of the integral
(\ref{eqn:T3l}) (\cite{xBG}, see also \cite{G}). Parameters $\alpha,\beta$ and
$\gamma$ are chosen in such a way that the integrand in
(\ref{eqn:T3l}) is a section of the volume bundle of $(G/P)^3$.
Whence the invariance follows.
\end{proof}
The case when $P$ is a minimal parabolic subgroup was considered in
\cite{D} for $G = SO_0(m+1,1)$.
We note that in this situation $\overline P$ is
automatically conjugate to $P$.\vskip10pt

Returning to our settings, we have an isomorphism of Lie algebras:
$$
\mathfrak{sp}(1,\mathbb{R})\simeq\mathfrak{so}(2,1)\simeq\mathfrak{sl}(2,\mathbb{R}),
$$
each of which is the `bottom' of different series of Lie algebras,
namely $\mathfrak{sp}(n,\mathbb{R}),\,\mathfrak{so}(n,1)$, and
$\mathfrak{sl}(n,\mathbb{R})$. Bearing this in mind, we list the
following three cases:

Case \textbf{Sp}.  Theorem \ref{thm:gBZ} corresponds to the
evaluation of the trilinear form (\ref{eqn:trilinear}) on the
$K$-fixed vector
$\indic_{\lambda_1}\otimes\indic_{\lambda_2}\otimes\indic_{\lambda_3}$
for the following particular pair: $G = Sp(n,\mathbb{R})$ and
$P=MAN$ a maximal parabolic subgroup such that
$M\simeq\mathbb{Z}/2\mathbb Z \times Sp(n-1,\mathbb{R})$ and $N$ is
the Heisenberg group in $2n-1$ variables. Notice that $S^{2n-1}$ is
a double covering of $G/P$. The representation space $V_\mu$ can be
identified with $V_\mu(\mathbb R^{2n})$ introduced in
(\ref{eqn:Vmu}).
Then the kernel of the operator $\mathcal T_\mu$ introduced in
(\ref{eqn:Tmu}) is $K_\mu(X,Y)=|[X,Y]|^{-\mu-n}\in
V_\mu^{-\infty}\otimes V_\mu^{-\infty}$ which gives rise to the
Knapp--Stein intertwining operator.

Case \textbf{SO}. Theorem \ref{thm:SO} corresponds to the case where
$G=SO_o(m+1,1)$ and $P$ is a minimal parabolic subgroup. Through the
identification $G/P\simeq S^m$ the Knapp--Stein intertwining
operator is given by $\mathcal R_\mu$ (see (\ref{eqn:Rmu})), and the
triple integral in Theorem \ref{thm:SO} corresponds to the
evaluation of the trilinear form (\ref{eqn:T3l}) on the $K$-fixed
vector.

\textbf{ Case: GL}. Yet another expression of the sphere $S^{N-1}$
as a homogeneous space is given by $G/P$, where $=GL(N,\mathbb R)$
and $P$ is a maximal parabolic subgroup corresponding to the
partition $N=1+(N-1)$. The operators $\mathcal Q_\mu$ introduced in
(\ref{eqn:Qmu}) and involved in the Theorem \ref{thm:xy} can also be
interpreted as the Knapp--Stein integrals for representations
induced from $P$ and its opposite parabolic $\overline{P}$. Notice
that the condition C1 fails for $N>2$ and Proposition \ref{prop:51}
does not apply.\vskip10pt

What we have found in particular is the eigenvalues of operators
$\mathcal T_\mu,\,\mathcal Q_\mu$ and $\mathcal R_\mu$ in terms of
Gamma functions. The corresponding eigenspaces are irreducible
representation spaces of the maximal compact subgroup $K$. Indeed,
in all three cases the following condition holds:
\begin{description}
\item C2. The space $K/(K\cap M)$ is a multiplicity--free
             space, in other words, $(K, K\cap M)$ is a Gelfand
             pair.
\end{description}
This implies that the representation space $V_\mu$ contains an
algebraic direct sum of pairwise inequivalent irreducible
representations of $K$ as its dense subspace. Therefore the action
of the operators $\mathcal T_\mu$ on each $K$-representation space
is automatically a scalar multiple of the identity by Schur's lemma.
For example in Case \textbf{Sp}, $K\simeq U(n)$, the corresponding
restriction $\pi_\mu \big|_K$ is given by $ \displaystyle
\bigoplus\limits_{\alpha,\beta\in\mathbb{N}}
        \mathcal{H}^{\alpha,\beta} (\mathbb{C}^n),
$
 and the eigenvalues are described in Theorem
\ref{thm:Teigen}.

In Cases \textbf{SO} and \textbf{GL} the condition C2 is also
satisfied. We can see this by a direct computation but also by the
general observation that the unipotent radical $N$ is abelian and
consequently $(K,\, M\cap K)$ is a symmetric pair.\vskip10pt

Another feature of our settings is the following condition:
\begin{description}
             \item C3. The diagonal action of $G$ on $(G/P)^3$ admits an open orbit.
\end{description}
(In fact, there is only one such an open dense orbit except the case
of $SL(2,\mathbb R)$, where there are two open orbits.)

The condition C3 is connected to the upper bound of the number of
linearly independent trilinear forms for generic
$\lambda_1,\,\lambda_2$ and $\lambda_3$. If this number equals one
then such an invariant trilinear form is proportional to the one
constructed in Proposition \ref{prop:51} under the condition
C1.\vskip10pt

 Case \textbf{Sp} ($n\geq2$) is of a particular interest: the group
 $G$ is of arbitrarily high rank,
$N$ is non-abelian, and $(K,\,M\cap K)$ is a non-symmetric pair.
Nevertheless all the conditions C1, C2 and C3 are fulfilled. The
corresponding trilinear form
$\mathbf{T}_{\lambda_1,\lambda_2,\lambda_3}$ has recently arisen in
a different context, namely in pseudo-differential analysis. More
precisely, a new (non-perturbative) composition formula based on
this trilinear form is established for the Weyl operator calculus on
$L^2(\mathbb R^{2n})$ in \cite{KOPU}, where a slightly different
notation is adopted:
$\mathbf{T}_{\lambda_1,\lambda_2,\,\lambda_3}(f_1,f_2,f_3)=
\mathbf{J}_{-\lambda_1,-\lambda_2;\,\lambda_3}^{0,\,0;\,0}(f_1,f_2,f_3).$

\subsection{Convergence of the invariant triple integral}\label{sec:Tconv}

This subsection provides a sufficient condition for the convergence of
the triple integral in Proposition \ref{prop:51}.

We take $\Sigma^+({\mathfrak {g}}; {\mathfrak {a}})$
 to be the set of weights of ${\mathfrak {n}}$
 with respect to ${\mathfrak {a}}$.
The corresponding dominant Weyl chamber ${\mathfrak {a}}_+^{\ast}$ is defined by
$$
{\mathfrak {a}}_+^{\ast} := \{\nu \in {\mathfrak {a}}^{\ast}: \langle \nu, \alpha \rangle \ge 0
                       \quad
                       \text{for any }\,\, \alpha \in \Sigma^+({\mathfrak {g}};{\mathfrak {a}})\}.
$$

According to the direct sum decomposition
$\mathfrak{a}_{\mathbb{C}}^* = \mathfrak{a}^* + \sqrt{-1} \mathfrak{a}^*$,
we write
$\lambda = \operatorname{Re} \lambda + \sqrt{-1} \operatorname{Im} \lambda$
for $\lambda \in \mathfrak{a}_{\mathbb{C}}^*$.
Then we have
\begin{proposition}\label{prop:Tconv}
Suppose we are in the setting of Proposition \ref{prop:51}.
If
\begin{equation}\label{eqn:Tconv}
\operatorname{Re} \alpha, \operatorname{Re} \beta, \operatorname{Re} \gamma
\in -\rho - \mathfrak{a}_+^*,
\end{equation}
then the integral \eqref{eqn:T3l} converges absolutely.
\end{proposition}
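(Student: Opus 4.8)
The plan is to deduce the absolute convergence from boundedness of the integrand on the \emph{compact} manifold $(G/P)^3$. First I would use the Iwasawa decomposition to identify $G/P$ with $K/(M\cap K)$, so that $(G/P)^3$ is compact and carries the $K$-invariant product density used to integrate \eqref{eqn:T3l}; the sections $f_j$ are smooth on the compact $G/P$, hence contribute a bounded factor, so it suffices to bound the product of Knapp--Stein kernels $|K_{\frac12(\alpha-\rho)}(y,z)|\,|K_{\frac12(\beta-\rho)}(z,x)|\,|K_{\frac12(\gamma-\rho)}(x,y)|$ pointwise. Each factor $|K_\lambda|$ is smooth on the open $G$-orbit of $(G/P)^2$ and is singular only along the complementary incidence subvariety $D$ (for $G=SO_o(m+1,1)$ this is the diagonal of $S^m\times S^m$; for $G=Sp(n,\mathbb R)$ it is $\{[\,\cdot\,,\,\cdot\,]=0\}\subset S^{2n-1}\times S^{2n-1}$), so the whole point is to control $|K_\lambda|$ near $D$.

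The heart of the matter is the behaviour of the Knapp--Stein kernel near $D$. Working in the Bruhat chart $\bar N\cong\bar N\cdot o\subset G/P$, $o=eP$, one has the integral realization of the intertwining operator $\mathcal T_\lambda$, absolutely convergent for $\operatorname{Re}\lambda$ in a suitable chamber and continued meromorphically; combined with the transformation rule \eqref{eqn:invKS} this gives $|K_\lambda(\bar n_1\cdot o,\bar n_2\cdot o)|=u(\bar n_1,\bar n_2)\,e^{\langle-\operatorname{Re}\lambda-\rho,\,H(\bar n_1,\bar n_2)\rangle}$ with $u$ smooth and positive and $H$ an $\mathfrak a$-valued function whose blow-up near $D$ is governed by $-\log(\text{local defining function})$ along each stratum. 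Since the $A$-action on $\bar N$ is graded by $\Sigma^+(\mathfrak g;\mathfrak a)$, near the codimension-one stratum of $D$ labelled by a weight $\nu\in\Sigma^+(\mathfrak g;\mathfrak a)$ the kernel $|K_\lambda|$ is comparable to a power of the defining function with exponent a positive multiple of $\langle-\operatorname{Re}\lambda-\rho,\nu\rangle$, and near the deeper strata to sums of such exponents (plus nonnegative Jacobian terms). In the real rank one cases $D$ is a single smooth hypersurface and $|K_\lambda|\asymp d_D^{\,-2\operatorname{Re}\lambda-2\rho}$; in Case \textbf{Sp} this is just $|[\,\cdot\,,\,\cdot\,]|^{-\operatorname{Re}\lambda-n}$.

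I would then substitute $\lambda=\tfrac12(\alpha-\rho)$, so that $-\operatorname{Re}\lambda-\rho=-\tfrac12(\operatorname{Re}\alpha+\rho)$, and read the hypothesis $\operatorname{Re}\alpha\in-\rho-\mathfrak a_+^*$ as saying exactly that $-(\operatorname{Re}\alpha+\rho)$ lies in the closed dominant cone $\mathfrak a_+^*$. Then every exponent $\langle-\tfrac12(\operatorname{Re}\alpha+\rho),\nu\rangle$ appearing above is $\geq 0$, so, normalizing the defining functions to be $\le 1$ on the compact model, $|K_{\frac12(\alpha-\rho)}(y,z)|$ is bounded by a constant on all of $(G/P)^2$; the conditions on $\operatorname{Re}\beta$ and $\operatorname{Re}\gamma$ bound the other two factors in the same way. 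Hence the integrand of \eqref{eqn:T3l} is bounded on the compact space $(G/P)^3$, and the integral converges absolutely.

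The step I expect to be the main obstacle is the second one: establishing the asymptotics of $K_\lambda$ \emph{uniformly} along the entire incidence variety $D$, in particular along its higher-codimension, non-open Bruhat cells, and, in higher real rank, accounting for the simultaneous contributions of several root lengths. One clean way to organize this is to stratify $D$ by Bruhat cells, resolve it to normal crossings by an iterated blow-up, and read off the exponents from the $\mathfrak a$-grading of $\bar N$: the exponent on a deeper stratum is a sum of codimension-one exponents plus nonnegative Jacobian terms, so the codimension-one bounds — precisely the conditions \eqref{eqn:Tconv} on $\operatorname{Re}\alpha,\operatorname{Re}\beta,\operatorname{Re}\gamma$ — already force integrability everywhere. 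This also explains why no extra ``triple-point'' hypothesis appears here, whereas in the degenerate case $\dim G/P=1$ (that is, $\mathfrak{sl}(2,\mathbb R)$) a genuine further condition is needed for the \emph{sharp} region (cf.\ the condition $\operatorname{Re}\delta>-1$ for $n=1$ recorded after Theorem \ref{thm:gBZ}): the cone $-\rho-\mathfrak a_+^*$ is small enough to absorb it.
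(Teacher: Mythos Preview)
Your overall strategy matches the paper's: reduce to showing that each Knapp--Stein kernel $K_{\frac12(\alpha-\rho)}$, $K_{\frac12(\beta-\rho)}$, $K_{\frac12(\gamma-\rho)}$ is bounded on $K\times K$, so that the integrand is bounded on the compact space $(K/M)^3$. Your parameter computation $-\lambda-\rho=-\tfrac12(\operatorname{Re}\alpha+\rho)\in\mathfrak a_+^*$ is also correct.

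Where you diverge from the paper is in \emph{how} you bound $|K_\lambda|$. You propose a direct asymptotic analysis near the incidence variety $D$, stratifying $D$ by Bruhat cells and resolving to normal crossings. The paper sidesteps all of this with a short representation-theoretic argument (its Lemma~\ref{lem:Kmu}): writing $K_\lambda(g_1,g_2)=e^{\langle\lambda+\rho,\,\mu(g_1^{-1}g_2 w)\rangle}$ via the Bruhat projection $\mu:G'\to\mathfrak a$, one observes that for a dominant integral $\widetilde\lambda$ (lifting $\lambda\in\mathfrak a_+^*$) the function $g\mapsto e^{-\langle\lambda,\mu(g)\rangle}$ is the restriction to the open cell of the highest weight vector in the Borel--Weil realization of the finite-dimensional $G_{\mathbb C}$-module of highest weight $\widetilde\lambda$. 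A highest weight vector is a matrix coefficient, hence bounded on $K$; this gives $\inf_{k\in K'}\langle\nu,\mu(k)\rangle>-\infty$ for every $\nu\in\mathfrak a_+^*$, and so $K_\lambda$ is bounded on $K\times K$ whenever $-(\operatorname{Re}\lambda+\rho)\in\mathfrak a_+^*$.

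Your approach is in principle workable, and you are honest about the obstacle: in higher real rank, controlling the asymptotics uniformly across all Bruhat strata (and showing the exponents on deeper strata are nonnegative sums of the codimension-one ones) is genuine work, essentially a case of Hironaka-type resolution adapted to the $\mathfrak a$-grading. The Borel--Weil trick buys you exactly this uniformity for free, with no stratification or blow-up, and it works verbatim for arbitrary $G$ and $P$ satisfying condition C1. What your more geometric route would buy, if carried out, is finer information: the precise exponents along each stratum, which is what one needs to determine the \emph{sharp} convergence region (as in Section~\ref{sec:Appendix}) rather than the sufficient region \eqref{eqn:Tconv}.
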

Here, $-\rho-\mathfrak{a}_+^*$ is a subset of $\mathfrak{a}^*$ given
by
$-\rho - \mathfrak{a}_+^*
 := \{ -\lambda-\rho : \lambda \in \mathfrak{a}_+^* \}$.

The rest of this subsection is devoted to the proof of Proposition
\ref{prop:Tconv}.
We will show that the integral kernel
\[
K_{\frac{1}{2}(\alpha-\rho)} (y,z)
K_{\frac{1}{2}(\beta-\rho)} (z,x)
K_{\frac{1}{2}(\gamma-\rho)} (y,z)
\]
is bounded on the triple product manifold
$K \times K \times K$ if the assumption
\eqref{eqn:Tconv} is satisfied.
(As we shall see in Section \ref{sec:Appendix} for specific cases, the condition \eqref{eqn:Tconv} is not a necessary condition for the absolute convergence.)

Consider the multiplication map
\begin{equation*}
     N \times M \times A \times \overline N \to G,
     \quad
     (n,m,a,\overline n) \mapsto n m a \overline n.
\end{equation*}
This is a diffeomorphism into an open dense subset
$
     G':= N M A \overline{N}
$
 of $G$ (the open {\it{Bruhat cell}}).
We define the projection $\mu$ by
$$
  \mu:G' \to {\mathfrak {a}},
  \quad
  n m e^X \overline n \mapsto X.
$$
We set $K' := G' \cap K$.
Then we have
\begin{lemma}
\label{lem:Kmu}
$K'$
 is dense in $K$.
Further, if $\nu \in \mathfrak{a}_+^*$ then
 $\inf_{k \in K'} \langle \nu, \mu (k) \rangle > - \infty$.
\end{lemma}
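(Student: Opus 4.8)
The plan is to establish the two assertions of Lemma \ref{lem:Kmu} separately, since they are of rather different nature: the density of $K'$ in $K$ is a topological statement, while the lower bound on $\langle \nu,\mu(k)\rangle$ is a quantitative one that must be read off from the structure of the Iwasawa/Bruhat coordinates.

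For the first assertion, I would argue that $G' = NMA\overline N$ is open and dense in $G$ (this is the standard Bruhat cell statement, already invoked above), so its complement $G \setminus G'$ is a closed set of measure zero, in fact a finite union of lower-dimensional submanifolds. Intersecting with $K$: since $G/P \cong K/(K\cap M)$ and the open Bruhat cell $NMA\overline N / P \cong \overline N$ corresponds under this identification to an open dense subset of $K/(M\cap K)$, pulling back to $K$ shows $K' = K \cap G'$ is open and dense in $K$. Alternatively, and perhaps more cleanly, one observes that $G\setminus G'$ is a proper real-analytic subvariety of $G$, hence $K \cap (G\setminus G')$ is a proper real-analytic subvariety of the connected manifold $K$ (it is proper because $e \in K'$, as $e = e\cdot e\cdot e\cdot e$ lies in $NMA\overline N$), and a proper analytic subvariety has empty interior; therefore $K'$ is dense.

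For the second assertion, write $k = n(k)\, m(k)\, e^{\mu(k)}\, \overline n(k)$ for $k \in K'$. The key observation is that $\mu(k)$ is closely tied to the Iwasawa projection: if we also write $k = \kappa(k)\, e^{H(k)}\, n'(k)$ in the Iwasawa decomposition $G = KAN$ relative to the \emph{opposite} ordering (so that $\overline N$ plays the role of the nilpotent factor), then a direct comparison of the two factorizations of $k$ shows that $\mu(k)$ differs from an Iwasawa $A$-projection only by a bounded amount, or can be controlled in terms of it. The decisive input is then the classical fact that for $\nu \in \mathfrak{a}_+^*$ the function $k \mapsto \langle \nu, H(k)\rangle$ on $K$ (Harish-Chandra's spherical-function exponent) is bounded below — indeed $\langle \nu, H(k)\rangle \ge \langle w_0\nu, \log a\rangle$-type estimates hold, and more simply $e^{\langle \nu, H(k)\rangle}$ extends continuously to all of $K$ and is positive, hence bounded away from $0$ on the compact set $K$ by $w_0$-invariance considerations or by the standard estimate $\|\nu\| \cdot C \le \langle \nu, H(k)\rangle$. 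Concretely: the map $G' \ni g \mapsto e^{\langle \nu, \mu(g)\rangle}$ is, up to a positive multiplicative constant depending on the $M$, $N$, $\overline N$ components (which are irrelevant since $\nu$ is trivial on them), a matrix coefficient of a finite-dimensional representation of $G$ with highest weight a multiple of $\nu$; such a matrix coefficient is a polynomial function on $G$, hence continuous on all of $K$, hence bounded; and since it is strictly positive on the dense set $K'$ and continuous, its infimum over $K'$ equals its minimum over $K$, which is a positive real number. Taking logarithms gives $\inf_{k\in K'}\langle \nu,\mu(k)\rangle > -\infty$.

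The main obstacle is making the second part genuinely precise, i.e.\ identifying $e^{\langle \nu,\mu(k)\rangle}$ with a bona fide polynomial matrix coefficient and confirming that it does not vanish on $K'$. For this one picks a dominant integral weight $\xi$ proportional to $\nu$ (or a sum over the finitely many simple roots with the relevant coefficients), realizes it as the highest weight of a finite-dimensional representation $(\tau_\xi, W_\xi)$ of $G$, fixes a highest weight vector $w_\xi$ and a lowest weight vector $w_\xi^*$ in the dual so that $g \mapsto \langle \tau_\xi(g) w_\xi, w_\xi^*\rangle$ is a matrix coefficient; for $g = nme^X\overline n \in G'$ the $N$ and $\overline N$ factors act trivially on $w_\xi$ and $w_\xi^*$ respectively (up to the highest/lowest weight line), $M$ acts by a unitary character, and $e^X$ acts by $e^{\langle\xi, X\rangle}$, so the matrix coefficient equals $c\cdot e^{\langle\xi,\mu(g)\rangle}$ with $|c|=1$. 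This matrix coefficient is the restriction of a polynomial function on $G$ (hence continuous on $K$), it is nonzero precisely on $G'$ — so nonzero on $K'$ — and its modulus, being continuous and positive on the compact set $K$... \emph{wait} — one must be slightly careful: the matrix coefficient \emph{vanishes} on $K \setminus K'$, so its modulus is \emph{not} bounded below on $K$. The correct statement is the other direction: $|c \cdot e^{\langle\xi,\mu(k)\rangle}|$ is bounded \emph{above} on $K$ by continuity and compactness, and since $\xi$ is a positive multiple of $\nu$ this gives $\langle\nu,\mu(k)\rangle$ bounded \emph{above}; to get the \emph{lower} bound one instead uses the Iwasawa projection directly, where $e^{-\langle\nu,H(k)\rangle}$ (with the standard convention $g = \kappa(g)e^{H(g)}n$) is the relevant positive continuous function on all of $K$, giving the lower bound on $\langle\nu,H(k)\rangle$, and then one checks that $\mu(k)$ and $H(k)$ (or the analogous projection for the opposite Iwasawa decomposition) differ by something bounded — this comparison is the genuine technical heart of the argument and I would carry it out by writing $k = n m e^{\mu(k)}\overline n$ and reorganizing as $k = n m e^{\mu(k)} \overline n$ with $\overline n = (\text{element of }K)\cdot e^{(\cdot)}\cdot(\text{element of }N)$ via the Iwasawa decomposition of $\overline n$ applied inside $\overline N A N$-coordinates, tracking that the resulting $A$-shift is controlled uniformly because $\overline n$ ranges over the image of the compact $K'$.
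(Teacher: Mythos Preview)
Your density argument for $K'$ in $K$ is fine; the paper does it slightly differently (via the Iwasawa decomposition $G=KA\overline N$, observing that $G'/MA\overline N\simeq K'/M$ sits densely in $G/MA\overline N\simeq K/M$), but your argument via proper analytic subvarieties works as well.

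For the second assertion, your matrix-coefficient idea is exactly the paper's approach, but you abandoned it because of a sign and side error. With your choice of $w_\xi$ (highest weight, fixed by $N$) and $w_\xi^*$ (lowest weight in the dual, fixed by $\overline N$), the function $\phi(g)=\langle\tau_\xi(g)w_\xi,w_\xi^*\rangle$ is \emph{right} $N$-invariant and \emph{left} $\overline N$-invariant; this matches the opposite Bruhat cell $\overline N M A N$, not $G'=NMA\overline N$, so your conclusion $\phi(g)=c\,e^{+\langle\xi,\mu(g)\rangle}$ is not justified on $G'$. The correct setup (as the paper does) is to take the highest weight vector $f_\lambda$ in the Borel--Weil realization $\mathcal O(G_{\mathbb C}/\overline{P_{\mathbb C}},\mathcal L_\lambda^{\mathbb C})$: this $f_\lambda$ is \emph{left} $N$-invariant and \emph{right} $\overline P$-covariant, so for $g=nme^X\overline n\in G'$ one gets
\[
f_\lambda(nme^X\overline n)=e^{-\langle\lambda,X\rangle}=e^{-\langle\lambda,\mu(g)\rangle}.
\]
Since $f_\lambda$ is a matrix coefficient of a finite-dimensional representation, it is continuous (indeed polynomial) on $G$ and hence bounded on the compact set $K$. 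Boundedness of $e^{-\langle\lambda,\mu(k)\rangle}$ from above on $K'$ is precisely the \emph{lower} bound $\langle\lambda,\mu(k)\rangle\ge-\log\sup_K|f_\lambda|>-\infty$. Taking positive rational combinations of such $\lambda$ covers all of $\mathfrak a_+^*$. So the matrix-coefficient route does give the right inequality directly; your ``wait'' was triggered by the wrong sign, and the Iwasawa-comparison fallback you then sketched is neither needed nor, as written, complete.
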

\begin{proof}
It follows from the Iwasawa decomposition $G=KA \overline N$
 that any element of $G'$ is written as
 $g' = k a \overline n $
 ($k \in K$, $a \in A$, $\overline n \in \overline N$).
Since $G'$ contains the subgroup $A \overline N$,
 we get $k \in K'$.
This leads us to the bijection
 $G'/M A \overline N \simeq K'/M$,
 which then is a dense subset of $G/MA \overline N \simeq K/M$.
Hence,
 $K'$ is dense in $K$.

In order to prove the second assertion,
 we may assume that $G$ is a linear group contained in a connected complex Lie group $G_{\mathbb{C}}$
 with Lie algebra ${\mathfrak {g}} \otimes_{\mathbb{R}} {\mathbb{C}}$.
Let $\overline {P_{\mathbb{C}}} = M_{\mathbb{C}} A_{\mathbb{C}}\overline{N_{\mathbb{C}}}$
 be the complexified parabolic subgroup of $\overline{P}$.
We take a $\theta$-stable Cartan subalgebra ${\mathfrak {t}}$
 of ${\mathfrak {m}}$.
Then,
 ${\mathfrak {h}}= {\mathfrak {t}} + {\mathfrak {a}}$
 is a Cartan subalgebra of ${\mathfrak {g}}$.

We fix a positive set
 $\Delta^+({\mathfrak {g}}_{\mathbb{C}},{\mathfrak {h}}_{\mathbb{C}})$
 of the root system
 such that
$
  \widetilde{\alpha}|_{\mathfrak {a}} \in \Sigma^+({\mathfrak {g}}, {\mathfrak {a}}) \cup \{0\}
$
 for any
$
  \widetilde \alpha \in \Delta^+({\mathfrak {g}}_{\mathbb{C}}, {\mathfrak {h}}_{\mathbb{C}})
$.
Suppose $\widetilde{\lambda} \in {\mathfrak {h}}_{\mathbb{C}}^*$
 is a dominant integral weight
 subject to the following condition:
\begin{equation}
\label{eqn:lmdat}
  \widetilde {\lambda}|_{\mathfrak {a}}=\lambda,
  \quad
  \widetilde {\lambda}|_{\mathfrak {t}}\equiv 0,
  \,\text{ and }\,
  \text{$\widetilde \lambda$ lifts to a holomorphic character of } M_{\mathbb{C}} A_{\mathbb{C}}.
\end{equation}

Then we get a holomorphic character,
to be denoted by $\mathbb{C}_\lambda$,
of $\overline{P_{\mathbb{C}}}$ by extending trivially on
$\overline{N_{\mathbb{C}}}$.

Let ${\mathcal{L}}_{\lambda}^{\mathbb C}:= G_{\mathbb{C}}
\times_{\overline{P_{\mathbb{C}}}}
     \mathbb C_{\lambda}$ be the $G_{\mathbb{C}}$-equivariant holomorphic bundle over
     $G_{\mathbb{C}}/\overline{P_{\mathbb{C}}}$
 associated to the holomorphic character $\mathbb C_{\lambda}$ of $\overline{P_{\mathbb{C}}}$.
Then,
 by the Borel--Weil theorem,
 the space $F_\lambda := {\mathcal{O}} (G_{\mathbb{C}}/\overline{P_{\mathbb{C}}}, {\mathcal{L}}_{\lambda}^{\mathbb C})$
 of holomorphic sections for ${\mathcal{L}}_{\lambda}^{\mathbb C}$
 gives an irreducible finite dimensional representation
 of $G_{\mathbb{C}}$
 with highest weight $\widetilde \lambda$.

Let $f_{\lambda} \in F_{\lambda}$
 be the highest weight vector normalized as $f_{\lambda}(e)=1$.
Then,
we have $f_{\lambda}(n m a \overline n) = a^{-\lambda}$
 for $n m a  \overline n \in N_{\mathbb{C}} M_{\mathbb{C}} A_{\mathbb{C}} \overline{N_{\mathbb{C}}}$.
In particular,
 if $g \in G'$,
we get
$$
  f_{\lambda} (g)=e^{- \langle \lambda, \mu(g)\rangle}.
$$
Since $f_{\lambda}$ is a matrix coefficient,
 $f_{\lambda}|_K$ is a bounded function for any $\lambda$
 coming from the above $\widetilde {\lambda}$.
In light that
$
  {\mathbb{R}}_+ \text{-span} \{\lambda \in {\mathfrak {a}}^* : \widetilde \lambda
  \text{ satisfies \eqref{eqn:lmdat}}\}
$
   equals
  ${\mathfrak {a}}_+^{\ast}$,
we have proved Lemma \ref{lem:Kmu}.
\end{proof}

Let us complete the proof of Proposition \ref{prop:Tconv}. We recall
how the Knapp--Stein integral operator \cite{KS} is given in the
present context. Since we have assumed that the parabolic subgroup
$P$ is conjugate to $\overline{P}$, we can find $w \in K$ such that
$w^{-1} N w = \overline N$. Then,
 we define a function $K_{\lambda}$ defined on an open dense subset of $G \times G$
 by
$$
   K_{\lambda} (g_1, g_2) = e^{\langle \lambda + \rho, \mu(g_1^{-1} g_2 w)\rangle}
   \quad
   \text{ if } g_1^{-1} g_2 w \in G'.
$$
It follows from Lemma \ref{lem:Kmu} below that
 $K_{\lambda}$ is bounded on $K' \times K'$ if $-(\nu + \rho) \in {\mathfrak {a}}_+^{\ast}$,
 and in particular, defines a locally integrable function on $G \times G$.
The distribution kernel of the Knapp--Stein
intertwining operator coincides with $K_{\lambda}(g_1,g_2)$ when $\lambda$ stays in this range.

Return to the setting of Proposition \ref{prop:Tconv},
and assume the condition \eqref{eqn:Tconv}.
Then, by Lemma \ref{lem:Kmu},
we see that $K_{\frac{1}{2}(\alpha-\rho)}(y,z)$ is bounded on
$K \times K$, and likewise for
$K_{\frac{1}{2}(\beta-\rho)}(z,x)$
and
$K_{\frac{1}{2}(\gamma-\rho)}(x,y)$.
Since the integral \eqref{eqn:T3l} is performed over
the product of three copies of the
compact manifold $K/M$ $(\simeq G/P)$,
the integral \eqref{eqn:T3l} converges
absolutely for any $f_j \in V_{\lambda_j}$ $(j=1,2,3)$.
Hence, Proposition \ref{prop:Tconv} has been proved.
\qed

\section{Convergence of the triple integrals}\label{sec:Appendix}

In Section \ref{sec:Tconv}, we have given a sufficient condition for
the absolute convergence of the invariant triple integral in the
general setting. In this section, for the convenience of the reader,
we give the \textit{precise} region of the parameters for which the
triple integrals in our main results converge absolutely. Section
\ref{subsec:6.1} provides a basic machinery for the convergence of
the integral of the product of complex powers under a certain
regularity assumption \eqref{eqn:rIp}. This criterion gives
immediately the precise region of the absolute convergence of the
integral in Theorem \ref{thm:xy} (see Proposition \ref{ex:inner}).
Unfortunately, the regularity assumption \eqref{eqn:rIp} is
fulfilled only for generic points for the triple integral in Theorem
\ref{thm:gBZ}. This difficulty is overcome by additional local
arguments in Section \ref{subsec:6.2} (see Proposition
\ref{ex:symp}).

\subsection{Convergence under the regularity
condition}\label{subsec:6.1}

Let $M$ be a differentiable manifold,
and $f_1,\dots,f_r \in C^\infty(M)$.
We shall always assume that the zero set
$\{ p \in M : f_j(p) = 0 \}$
is non-empty for any $j$ $(1 \le j \le r)$.
For each point $p \in M$,
we define a subset of the index set $\{ 1,\dots,r \}$ by
\[
I(p) := \{ j : f_j(p) = 0 \},
\]
and a non-negative integer by
\[
r(p) := \dim \mathbb{R}\text{-span} \{ df_j(p) : j \in I(p) \}.
\]
Clearly, we have
\[
r(p) \le \# I(p) \ (\le r).
\]
We fix a Radon measure on $M$ which is equivalent to the Lebesgue
measure on coordinating neighbourhoods (i.e.\ having the same sets
of measure zero). Here is a basic lemma for the convergence of the
integral of $|f_1|^{\lambda_1} \cdots |f_r|^{\lambda_r}$ on $M$.
\begin{lemma}\label{lem:converg}
Assume $f_1,\dots,f_r \in C^\infty(M)$ satisfy the following
regularity condition:
\begin{equation}\label{eqn:rIp}
r(p) = \# I(p)
\quad\text{for any $p \in M$}.
\end{equation}
Let $\lambda_1,\dots,\lambda_r \in \mathbb{C}$.
Then, $|f_1|^{\lambda_1} \cdots |f_r|^{\lambda_r}$ is locally
integrable if and only if
\begin{equation}\label{eqn:lmd}
\operatorname{Re} \lambda_j > -1
\quad\text{for any $j$ $(1 \le j \le r)$}.
\end{equation}
\end{lemma}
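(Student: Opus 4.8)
**The plan is to reduce the question to a purely local statement near the zero sets, and there to a model computation in suitable local coordinates provided by the regularity hypothesis (\ref{eqn:rIp}).**

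First I would observe that local integrability is a local property, so it suffices to check it in a neighbourhood of each point $p \in M$. Away from $\bigcup_j \{f_j = 0\}$ all the functions $|f_j|^{\lambda_j}$ are smooth and non-vanishing, so there is nothing to prove there. Hence fix $p \in M$ and let $I = I(p)$ be the set of indices $j$ with $f_j(p) = 0$; for $j \notin I$ the factor $|f_j|^{\lambda_j}$ is smooth and bounded near $p$ and plays no role, so we may assume $I = \{1,\dots,s\}$ for some $s = \# I$. By the regularity assumption (\ref{eqn:rIp}), the differentials $df_1(p),\dots,df_s(p)$ are linearly independent, so $s \le \dim M$ and, by the submersion form of the implicit function theorem, we can choose local coordinates $(x_1,\dots,x_N)$ centred at $p$ in which $f_j(x) = x_j$ for $j = 1,\dots,s$. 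In these coordinates the Radon measure is, by hypothesis, equivalent to Lebesgue measure, so the integrability of $|f_1|^{\lambda_1}\cdots|f_r|^{\lambda_r}$ near $p$ is equivalent to the integrability of $|x_1|^{\operatorname{Re}\lambda_1}\cdots|x_s|^{\operatorname{Re}\lambda_s}$ on a neighbourhood of the origin in $\mathbb{R}^N$.

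Next I would do the model computation. On a product of small intervals around $0$ the integral factors as
\[
\prod_{j=1}^s \int_{-\varepsilon}^{\varepsilon} |x_j|^{\operatorname{Re}\lambda_j}\, dx_j \cdot \prod_{j>s}(\text{bounded factors})\cdot (2\varepsilon)^{N-s},
\]
and each one-dimensional integral $\int_{-\varepsilon}^{\varepsilon}|t|^{a}\,dt$ is finite precisely when $a > -1$. This immediately gives: $|f_1|^{\lambda_1}\cdots|f_r|^{\lambda_r}$ is integrable near $p$ if and only if $\operatorname{Re}\lambda_j > -1$ for every $j \in I(p)$. For the "only if" direction of the global statement I would use the standing assumption that each zero set $\{f_j = 0\}$ is non-empty: pick a point $p_j$ on it; then $j \in I(p_j)$, and local integrability near $p_j$ forces $\operatorname{Re}\lambda_j > -1$. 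Running over all $j$ yields (\ref{eqn:lmd}). Conversely, if (\ref{eqn:lmd}) holds, then at every point $p$ the local model is integrable, and a partition of unity argument patches these local bounds into global local integrability on all of $M$.

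The main obstacle, and the only place the regularity hypothesis (\ref{eqn:rIp}) is really used, is the normalization of coordinates: without $r(p) = \#I(p)$ the vanishing loci of the $f_j$ need not meet transversally, the factors cannot be simultaneously straightened to coordinate hyperplanes, and the integral no longer decouples into one-dimensional pieces — the true integrability threshold then depends on the geometry of the intersection (this is exactly the complication flagged for Theorem \ref{thm:gBZ} and handled separately in Section \ref{subsec:6.2}). Under (\ref{eqn:rIp}), though, transversality is automatic and the argument above is essentially a bookkeeping exercise; care is only needed to ensure finitely many coordinate charts suffice, which follows from a standard compact-exhaustion or partition-of-unity argument since everything is local.
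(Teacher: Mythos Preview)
Your proposal is correct and follows essentially the same route as the paper: both arguments localize, invoke the implicit function theorem via the regularity hypothesis (\ref{eqn:rIp}) to straighten the $f_j$ with $j\in I(p)$ to coordinate functions, and then reduce to the one-variable integral $\int_{-\varepsilon}^{\varepsilon}|t|^a\,dt$. The only cosmetic difference is in the necessity step: the paper moves from $p$ to a nearby point $q$ where \emph{only} $f_1$ vanishes and reads off $\operatorname{Re}\lambda_1>-1$ from a single factor, whereas you stay at $p_j$ and extract the condition for index $j$ directly from the full product model there---both are fine.
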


\begin{remark}
The local integrability does not depend on the choice of our measure
on $M$.
\end{remark}

Here is a prototype of Lemma \ref{lem:converg}:
\begin{example}\label{ex:xj}
Let $M = \mathbb{R}^n$.
We fix $r \le n$, and set $f_j(x) = x_j$ $(1 \le j \le r)$.
Then $|x_1|^{\lambda_1} \cdots |x_r|^{\lambda_r}$ is locally
integrable against the Lebesgue measure $dx_1 \cdots dx_n$ if and only
if\/ $\operatorname{Re} \lambda_j > -1$ $(1 \le j \le r)$.

This assertion is obvious for $r=1$, and the proof for general $r$
is reduced to the $r=1$ case. We observe that the regularity
assumption \eqref{eqn:rIp} is satisfied because $df_j = dx_j$ $(1
\le j \le r)$ are linearly independent at any point $p \in
\mathbb{R}^n$.
\end{example}

The proof of Lemma \ref{lem:converg} is reduced to Example \ref{ex:xj}
as follows:
\begin{proof}[Proof of Lemma \ref{lem:converg}]
Fix a point $p \in M$,
and suppose $I(p) = \{j_1,\dots,j_k \}$.
Then, by the implicit function theorem,
we can find differentiable functions
$y_{k+1},\dots,y_n$ in a neighbourhood $V_p$ of $p$ such that
$\{ f_{j_1},\dots,f_{j_k}, y_{k+1},\dots,y_n \}$
forms coordinates of $V_p$.

Assume \eqref{eqn:lmd} is satisfied.
Then it follows from Example
\ref{ex:xj} that
 the function
$\prod_{j\in I(p)} |f_j|^{\lambda_j}$
is integrable near $p$.
Multiplying it by the continuous function
$\prod_{j \notin I(p)} |f_j|^{\lambda_j}$,
we see that $|f_1|^{\lambda_1} \cdots |f_r|^{\lambda_r}$ is also
integrable near $p$.

Conversely,
assume $|f_1|^{\lambda_1} \cdots |f_r|^{\lambda_r}$ is locally
integrable.
We will show \eqref{eqn:lmd}.
Take a point $p \in M$ such that $f_1(p) = 0$.
By using the above mentioned coordinates in $V_p$,
we can find $q \in V_p$ such that $f_1(q) = 0$ and $f_j(q) \ne 0$
$(2 \le j \le r)$.
Then the integrability of $|f_1|^{\lambda_1} \cdots |f_r|^{\lambda_r}$
near $q$ is equivalent to that of $|f_1|^{\lambda_1}$.
This implies $\operatorname{Re} \lambda_1 > -1$.
Similarly, we get
$\operatorname{Re} \lambda_j > -1$ for all $j$ $(1 \le j \le r)$.
Hence, Lemma \ref{lem:converg} has been proved.
\end{proof}

Next, we discuss the local integrability of the function
$|f_1|^{\lambda_1} \cdots |f_r|^{\lambda_r}$ when the regularity condition
\eqref{eqn:rIp} fails.
The following two examples will be used to determine the range of
parameters for which the triple product integral in Theorem
\ref{thm:gBZ} is absolutely convergent.

\begin{example}\label{ex:xy}
The function $h_\lambda(x,y) := |x|^{\lambda_1} |y|^{\lambda_2}
|x-y|^{\lambda_3}$ is locally integrable on $\mathbb{R}^2$ if and
only if\/ $\operatorname{Re} \lambda_j > -1$ $(1 \le j \le 3)$ and
$\operatorname{Re} (\lambda_1 + \lambda_2 + \lambda_3) > -2$.
\end{example}

\begin{example}\label{ex:xyQz}
Let $p,q > 0$ and $p+q > 2$.
Suppose $Q(z)$ is a quadratic form on $\mathbb{R}^{p+q}$ of signature
$(p,q)$.
Then
$|x|^{\lambda_1} |y|^{\lambda_2} |x+y+Q(z)|^{\lambda_3}$
is locally integrable on $\mathbb{R}^{p+q+2}$ if and only if\/
$\operatorname{Re} \lambda_j > -1$ $(1 \le j \le 3)$.
\end{example}

We observe that  the regularity condition
$r(p) = \# I(p)$ of Lemma \ref{lem:converg} fails at the origin in
both of these examples.
This failure affects the condition on $\lambda_j$
 for the absolute convergence of the integral
 in Example \ref{ex:xy},
  but does not affect in Example \ref{ex:xyQz}.

\begin{proof}[Prof of Example \ref{ex:xy}]
Applying Lemma \ref{lem:converg} to $\mathbb{R}^2 \setminus \{0\}$,
we see that $h_\lambda(x,y)$ is locally integrable on
$\mathbb{R}^2 \setminus \{0\}$ if and only if
$\operatorname{Re} \lambda_j > -1$ $(1 \le j \le 3)$.

To examine the integrability near the origin,
we use the polar coordinate $(x,y) = (r \cos \theta, r \sin \theta)$.
Then we have
\begin{equation}\label{eqn:h2}
h_\lambda(x,y) dx dy
= r^{\lambda_1 + \lambda_2 + \lambda_3 +1}
  h_\lambda (\cos\theta, \sin\theta) dr d\theta.
\end{equation}
Since $\cos\theta$, $\sin\theta$, $\cos\theta-\sin\theta$ do not
vanish simultaneously
 and have simple zero,
\eqref{eqn:h2} is integrable near the origin if and only if
$\operatorname{Re} \lambda_j > -1$ $(1 \le j \le 3)$ and
$\operatorname{Re} (\lambda_1 + \lambda_2 + \lambda_3) > -2$.
Therefore, Example \ref{ex:xy} is proved.
\end{proof}

In order to give a proof of Example \ref{ex:xyQz},
we prepare the following:
\begin{claim}\label{lem:Qdz}
Let $Q(z)$ be as in Example \ref{ex:xyQz}.
Then there exists a continuous function $A(t,\delta)$ of two variables
$t \in \mathbb{R}$ and $\delta \ge 0$ such that
\[
A(t,\delta) \sim c \, \delta^{p+q-2}
\quad\text{as $t \to 0$}
\]
for some positive constant $c$ and that
\[
\int_{|z| \le \delta} g(Q(z)) dz_1 \cdots dz_{p+q}
= \int_{-\delta^2}^{\delta^2} g(t) A(t,\delta) dt.
\]
\end{claim}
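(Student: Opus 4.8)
The plan is to disintegrate the Lebesgue measure on the ball $B_\delta:=\{z\in\mathbb R^{p+q}:|z|\le\delta\}$ along the level sets of $Q$ by an explicit change of variables; this exhibits $A(t,\delta)$ as a one–dimensional integral from which continuity and the behaviour as $t\to0$ can be read off directly. We may assume that $Q(z)=|u|^2-|v|^2$ for $z=(u,v)$ with $u\in\mathbb R^p$, $v\in\mathbb R^q$: this is the normal form compatible with the range $t\in[-\delta^2,\delta^2]$ appearing in the statement, and for the application to Example \ref{ex:xyQz} only local integrability near the origin matters, so one may reduce to it by a linear change of variables in $z$ (replacing $B_\delta$ by a comparable ellipsoid, which is immaterial). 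With this normalisation I introduce double polar coordinates $u=\rho\omega$, $v=s\eta$ with $\rho,s\ge0$, $\omega\in S^{p-1}$, $\eta\in S^{q-1}$, so that $dz=\rho^{p-1}s^{q-1}\,d\rho\,ds\,d\sigma(\omega)\,d\sigma(\eta)$ while $Q(z)=\rho^2-s^2$ and $|z|^2=\rho^2+s^2$; integrating out the angular variables gives
\[
\int_{B_\delta}g(Q(z))\,dz
=\operatorname{vol}(S^{p-1})\operatorname{vol}(S^{q-1})
\iint_{\rho^2+s^2\le\delta^2,\ \rho,s\ge0}
g(\rho^2-s^2)\,\rho^{p-1}s^{q-1}\,d\rho\,ds.
\]

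Next I pass to the variables $(t,w):=(\rho^2-s^2,\ \rho^2+s^2)$ (equivalently $a=\rho^2$, $b=s^2$, followed by $t=a-b$, $w=a+b$). A routine Jacobian computation yields $\rho^{p-1}s^{q-1}\,d\rho\,ds=2^{-\frac{p+q}{2}-1}(w+t)^{\frac p2-1}(w-t)^{\frac q2-1}\,dt\,dw$, and the region $\{\rho^2+s^2\le\delta^2,\ \rho,s\ge0\}$ becomes $\{|t|\le w\le\delta^2\}$. Interchanging the order of integration, with $t$ outermost, identifies the function claimed in the statement as
\[
A(t,\delta):=\frac{\operatorname{vol}(S^{p-1})\operatorname{vol}(S^{q-1})}{2^{\frac{p+q}{2}+1}}
\int_{|t|}^{\delta^2}(w+t)^{\frac p2-1}(w-t)^{\frac q2-1}\,dw
\qquad(|t|\le\delta^2),
\]
extended by $A(t,\delta):=0$ for $|t|>\delta^2$. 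This integral converges, since near the left endpoint $w=|t|$ the integrand is $O\bigl((w-|t|)^{\frac p2-1}\bigr)$ or $O\bigl((w-|t|)^{\frac q2-1}\bigr)$, integrable because $p,q\ge1$.

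Putting $t=0$ then gives
\[
A(0,\delta)=\frac{\operatorname{vol}(S^{p-1})\operatorname{vol}(S^{q-1})}{2^{\frac{p+q}{2}+1}}\int_0^{\delta^2}w^{\frac{p+q}{2}-2}\,dw
=c\,\delta^{p+q-2},\qquad
c=\frac{\operatorname{vol}(S^{p-1})\operatorname{vol}(S^{q-1})}{2^{\frac{p+q}{2}+1}\bigl(\tfrac{p+q}{2}-1\bigr)}>0,
\]
where the hypothesis $p+q>2$ is precisely what makes the $w$-integral converge at $0$ and makes the exponent $p+q-2$ positive. So, once continuity of $A$ is established, the asymptotic relation $A(t,\delta)\to A(0,\delta)=c\,\delta^{p+q-2}$ as $t\to0$ follows.

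The remaining, and \emph{only non-routine}, point is continuity of $A$ on $\mathbb R\times[0,\infty)$, in particular across $t=0$ — the critical value of $Q$, where the level surface $Q^{-1}(0)$ passes through the origin — and along $|t|=\delta^2$, where the interval of integration collapses. The difficulty is that the singular endpoint $w=|t|$ of the defining integral moves with $t$. I would remove it by normalising the interval via the substitution $w=|t|+(\delta^2-|t|)\tau$, $\tau\in[0,1]$, which rewrites $A(t,\delta)=\int_0^1 F(t,\delta,\tau)\,d\tau$ with $F$ jointly continuous in $(t,\delta,\tau)$ for $\tau>0$ and, on every compact subset of $(t,\delta)$-space, bounded above by a fixed nonnegative function of $\tau$ that is integrable on $[0,1]$ (here $p+q>2$ again intervenes to guarantee integrability at $\tau=0$). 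Continuity of $A$ then follows from the standard continuity of parameter integrals; the degenerate cases $|t|=\delta^2$ and $\delta=0$, where $A=0$, are immediate from the formula. Everything besides this last dominated-convergence step is the explicit change of variables above.
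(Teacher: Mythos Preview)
Your argument is correct and follows essentially the same route as the paper: reduce to the standard form of $Q$, pass to double polar coordinates, then change variables to $(t,w)=(\rho^2-s^2,\rho^2+s^2)$ to obtain the identical explicit formula
\[
A(t,\delta)=\frac{\operatorname{vol}(S^{p-1})\operatorname{vol}(S^{q-1})}{2^{\frac{p+q}{2}+1}}\int_{|t|}^{\delta^2}(w+t)^{\frac{p}{2}-1}(w-t)^{\frac{q}{2}-1}\,dw,
\]
and evaluate at $t=0$. The only difference is that you supply a careful dominated-convergence argument for the continuity of $A$ (normalising the moving interval of integration), whereas the paper simply writes down $A(0,\delta)$ and leaves continuity implicit; your addition is a genuine improvement in rigour but not a different method.
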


\begin{proof}[Proof of Claim \ref{lem:Qdz}]
Without loss of generality,
we may and do assume that $Q(z)$ is of the standard form
$Q(z) = z_1^2 + \dots + z_p^2 - z_{p+1}^2 - \dots - z_{p+q}^2$.
Taking the double polar coordinates
\[
z = (r\omega, s\eta), \ r,s, \ge 0, \  \omega \in S^{p-1}, \  \eta \in S^{q-1},
\]
we have
\begin{equation}\label{eqn:Bdrds}
\int_{|z|<\delta} g(Q(z)) dz
= \operatorname{vol}(S^{p-1}) \operatorname{vol}(S^{q-1})
  \int_{B_+(\delta)} g(r^2-s^2) r^{p-1} s^{q-1} dr ds,
\end{equation}
where we set $B_+(\delta) := \{ (r,s) \in \mathbb{R}^2:
r \ge 0, s \ge 0, r^2+s^2 \le \delta^2 \}$.
By the change of variables
$R := r^2 + s^2$, $t := r^2 - s^2$,
the right-hand side of \eqref{eqn:Bdrds} amounts to
\[
\int_{-\delta^2}^{\delta^2} g(t) A(t,\delta) dt,
\]
where $A(t, \delta)$ is defined by
\[
A(t,\delta)
:= \frac{\operatorname{vol}(S^{p-1}) \operatorname{vol}(S^{q-1})}
        {2^{\frac{p+q}{2}+1}}
   \int_{|t|}^{\delta^2} (R+t)^{\frac{p-2}{2}} (R-t)^{\frac{q-2}{2}} dR.
\]
Putting $t=0$, we have
\[
A(0,\delta)
= \frac{\delta^{p+q-2} \operatorname{vol}(S^{p-1}) \operatorname{vol}(S^{q-1})}
       {2^{\frac{p+q}{2}+1} (p+q-2)}.
\]
Thus, Claim \ref{lem:Qdz} is shown.
\end{proof}

We are ready to complete the proof of Example \ref{ex:xyQz}.
\begin{proof}[Proof of Example \ref{ex:xyQz}]
The regularity condition \eqref{eqn:rIp} is fulfilled except for the
origin.
Applying Lemma \ref{lem:converg} to $\mathbb{R}^{p+q+2} \setminus \{0\}$,
we see that
$|x|^{\lambda_1} |y|^{\lambda_2} |x+y+Q(z)|^{\lambda_3}$ is locally
integrable on $\mathbb{R}^{p+q+2} \setminus \{0\}$ if and only if
$\operatorname{Re} \lambda_j > -1$ $(1 \le j \le 3)$.

What remains to prove is that this function is still integrable near the origin under
the same assumption.
By Claim \ref{lem:Qdz}
we can reduce the convergence of the integral
to that of the three variables case,
namely, it is sufficient to show that
$|x|^{\lambda_1} |y|^{\lambda_2} |x+y+t|^{\lambda_3} A(t,\delta)$
is integrable against $A(t,\delta) dx dy dt$ near $(0,0,0) \in \mathbb{R}^3$
for a fixed $\delta > 0$.
Since $\{ x,y, x+y+t \}$ meets the regularity condition \eqref{eqn:rIp},
we can apply Lemma \ref{lem:converg} again,
and conclude that it is integrable if
$\operatorname{Re} \lambda_j > -1$ $(1 \le j \le 3)$.
Therefore, the proof of Example \ref{ex:xyQz} has been completed.
\end{proof}

\subsection{Applications to the triple integrals}\label{subsec:6.2}

We apply Lemma \ref{lem:converg} to find a condition for
the convergence of the triple
integrals in the previous sections.
The first case is a direct consequence of Lemma \ref{lem:converg}:
\begin{proposition}[see Theorem \ref{thm:xy}]
\label{ex:inner}
Let $M := S^{N-1} \times S^{N-1} \times S^{N-1}$,
and
\[
h_\lambda (x,y,z)
:= | \langle y,z \rangle |^{\lambda_1}
   | \langle z,x \rangle |^{\lambda_2}
   | \langle x,y \rangle |^{\lambda_3}.
\]
Then $\displaystyle \int_M h_\lambda(x,y,z) d\sigma (x) d\sigma (y)
d\sigma (z)$ converges if and only if\/ $\operatorname{Re} \lambda_j
> -1$ $(1 \le j \le 3)$.
\end{proposition}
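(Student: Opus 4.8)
The plan is to obtain Proposition~\ref{ex:inner} as a direct application of Lemma~\ref{lem:converg}. Take $M := S^{N-1} \times S^{N-1} \times S^{N-1}$, a compact manifold carrying the smooth positive density $d\sigma(x)\,d\sigma(y)\,d\sigma(z)$, and put $r = 3$ with
\[
f_1(x,y,z) := \langle y,z\rangle,
\qquad
f_2(x,y,z) := \langle z,x\rangle,
\qquad
f_3(x,y,z) := \langle x,y\rangle .
\]
Since $M$ is compact, absolute convergence of $\int_M h_\lambda$ is equivalent to local integrability of $h_\lambda = |f_1|^{\lambda_1}|f_2|^{\lambda_2}|f_3|^{\lambda_3}$, so once the regularity hypothesis \eqref{eqn:rIp} has been checked, the claimed criterion $\operatorname{Re}\lambda_j > -1$ $(1 \le j \le 3)$ is exactly what Lemma~\ref{lem:converg} delivers. (For $N \ge 2$ each zero set $\{f_j = 0\}$ is non-empty, as the standing hypothesis of Lemma~\ref{lem:converg} requires; the case $N=1$ is trivial.) Everything therefore reduces to verifying that at every $p = (x,y,z) \in M$ the differentials $\{ df_j(p) : j \in I(p)\}$ are linearly independent.

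The first step is to write down these differentials. Under the identification $T_p M = T_x S^{N-1} \oplus T_y S^{N-1} \oplus T_z S^{N-1}$, and denoting by $P_v$ the orthogonal projection of $\mathbb{R}^N$ onto $v^\perp = T_v S^{N-1}$, one has
\[
df_1(p) = (0,\, P_y z,\, P_z y), \qquad
df_2(p) = (P_x z,\, 0,\, P_z x), \qquad
df_3(p) = (P_x y,\, P_y x,\, 0).
\]
The useful point is that these expressions become orthonormal precisely on the relevant vanishing loci: if $1 \in I(p)$, i.e.\ $\langle y,z\rangle = 0$, then $P_y z = z$ and $P_z y = y$, so $df_1(p) = (0, z, y)$ with $z$ and $y$ unit vectors lying in $T_y S^{N-1}$ and $T_z S^{N-1}$ respectively; likewise $df_2(p) = (z, 0, x)$ when $2 \in I(p)$ and $df_3(p) = (y, x, 0)$ when $3 \in I(p)$. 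In particular each $df_j(p)$ is nonzero, which settles the case $\#I(p) = 1$.

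It then remains to handle $\#I(p) \in \{2,3\}$. If, say, $I(p) = \{1,2\}$, then $df_1(p) = (0,z,y)$ and $df_2(p) = (z,0,x)$: the $T_x S^{N-1}$-block of $df_1(p)$ vanishes while that of $df_2(p)$ is the unit vector $z \ne 0$, and the $T_y S^{N-1}$-block of $df_2(p)$ vanishes while that of $df_1(p)$ is $z \ne 0$, so no nontrivial linear relation between them exists; the remaining pairs are identical by symmetry. If $\#I(p) = 3$, then $x, y, z$ are pairwise orthogonal (so this can occur only for $N \ge 3$), and a relation $a\,df_1(p) + b\,df_2(p) + c\,df_3(p) = 0$ forces, on reading off the $T_x S^{N-1}$- and $T_y S^{N-1}$-blocks, $bz + cy = 0$ and $az + cx = 0$; since $\{x,y,z\}$ is orthonormal this gives $a = b = c = 0$. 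Hence \eqref{eqn:rIp} holds throughout $M$, and Lemma~\ref{lem:converg} yields the proposition.

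I do not expect any genuine obstacle here: the whole content is that the three bilinear functions $\langle y,z\rangle$, $\langle z,x\rangle$, $\langle x,y\rangle$ never degenerate against one another, so that \eqref{eqn:rIp} holds globally — in contrast with Example~\ref{ex:xy}, where the regularity condition fails at the origin and an extra inequality on $\operatorname{Re}(\lambda_1+\lambda_2+\lambda_3)$ enters. The only work is the elementary bookkeeping of the differentials on the product of spheres sketched above.
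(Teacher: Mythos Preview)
Your proof is correct and follows essentially the same approach as the paper: verify the regularity hypothesis \eqref{eqn:rIp} for the three functions $f_1=\langle y,z\rangle$, $f_2=\langle z,x\rangle$, $f_3=\langle x,y\rangle$ and then invoke Lemma~\ref{lem:converg}. The only cosmetic difference is that the paper first passes, via homogeneity, to $\widetilde{M}=(\mathbb{R}^N\setminus\{0\})^3$ so that the differentials can be read off in ambient coordinates without the projections $P_v$, whereas you stay on $M=(S^{N-1})^3$ and compute tangentially; the resulting linear-independence check (leading to $bz+cy=0$, $az+cx=0$, $ay+bx=0$) is identical in both treatments.
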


\begin{proof}[Proof of Proposition \ref{ex:inner}]
Since $h_\lambda (x,y,z)$ is a homogeneous function of $x$ (also,
that of $y$ and $z$), $h_\lambda$ is integrable on $M$ if and only
if it is locally integrable on $\widetilde{M} := (\mathbb{R}^N
\setminus \{0\}) \times
 (\mathbb{R}^N \setminus \{0\}) \times
 (\mathbb{R}^N \setminus \{0\})$.

We set the following functions of $(x,y,z) \in \widetilde{M}$ as
$f_1 := \langle y,z \rangle$,
$f_2 := \langle z,x \rangle$,
$f_3 := \langle x,y \rangle$.
Then we have
\begin{alignat*}{4}
&df_1 ={} && &&\langle z,dy \rangle &&{}+ \langle y,dz \rangle,
\\
&df_2 ={} &&\langle z,dx \rangle && &&{}+ \langle x,dz \rangle,
\\
&df_3 ={} &&\langle y,dx \rangle +{} &&\langle x,dy \rangle.
\end{alignat*}
Let us verify that the functions $f_1$, $f_2$, and $f_3$ meet
the regularity assumption \eqref{eqn:rIp} of Lemma
\ref{lem:converg} on $\widetilde{M}$.

Suppose $\# I(x,y,z) = 3$, namely,
\begin{equation}\label{eqn:xyz0}
\langle y,z \rangle = \langle z,x \rangle = \langle x,y \rangle = 0.
\end{equation}
We will show $r(x,y,z) = 3$.
If not, there would exist
$(a,b,c) \ne (0,0,0)$ such that
$adf_1 + bdf_2 + cdf_3 = 0$,
namely,
\begin{equation}
\label{eqn:abc}
bz + cy = 0,
\
az + cx = 0,
\
ay + bx = 0.
\end{equation}
This would contradict to \eqref{eqn:xyz0}.
Hence $r(x,y,z)$ must be equal to $3$.
Thus \eqref{eqn:rIp} holds when $\# I(x,y,z) = 3$.
Similarly, \eqref{eqn:rIp} can be verified when
$\# I(x,y,z) = 1$ or $2$.
Therefore, the assertion of Proposition \ref{ex:inner} follows from Lemma
\ref{lem:converg}.
\end{proof}

In contrast with Proposition \ref{ex:inner} for the triple product
of complex powers of inner products, the regularity assumption
\eqref{eqn:rIp} does not hold in the symplectic case. The next
example discusses this situation.
\begin{proposition}[see Theorem \ref{thm:gBZ}]
\label{ex:symp}
Let
$M = S^{2n-1} \times S^{2n-1} \times S^{2n-1}$,
and
$$
h_\lambda(X,Y,Z) :=
 |[Y,Z]|^{\lambda_1}  |[Z,X]|^{\lambda_2}  |[X,Y]|^{\lambda_3}.
$$
Then the triple integral $\displaystyle\int_M h_\lambda(X,Y,Z)
d\sigma(X) d\sigma(Y) d\sigma(Z)$ converges absolutely if and only
if
\begin{alignat*}{2}
&\operatorname{Re} \lambda_j > -1
 \ (1 \le j \le 3),
 \ \operatorname{Re} (\lambda_1 + \lambda_2 + \lambda_3) > -2
&&\quad \text{for $n=1$},
\\
&\operatorname{Re} \lambda_j > -1
 \ (1 \le j \le 3),
&&\quad \text{for $n\ge2$}.
\end{alignat*}
\end{proposition}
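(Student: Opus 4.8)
The plan is to follow exactly the strategy used for Proposition \ref{ex:inner}, reducing the integral over $M = (S^{2n-1})^3$ to a local integrability question on $\widetilde M = (\mathbb R^{2n}\setminus\{0\})^3$, since $h_\lambda$ is separately homogeneous of degree $0$ in each sphere (the bilinear form $[\ ,\ ]$ is homogeneous of degree $1$ in each slot). Setting $f_1 := [Y,Z]$, $f_2 := [Z,X]$, $f_3 := [X,Y]$, I would first check the regularity condition \eqref{eqn:rIp} of Lemma \ref{lem:converg} at every point where at most two of the $f_j$ vanish; exactly as in the inner-product case, the differentials $df_1 = [\,\cdot\,,Z]dY + [Y,\cdot\,]dZ$, etc., are easily seen to be linearly independent there using non-degeneracy of the symplectic form, so Lemma \ref{lem:converg} applies on the open set where $\#I < 3$ and gives $\operatorname{Re}\lambda_j > -1$ ($1\le j\le 3$) as the exact condition for local integrability away from the bad locus.

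The genuinely new feature is the locus $S := \{(X,Y,Z)\in\widetilde M : [Y,Z]=[Z,X]=[X,Y]=0\}$, where \eqref{eqn:rIp} can fail: on a Lagrangian-type configuration the three differentials need not remain independent, because the relations $bZ+cY\in(\cdot)^{\perp_J}$ etc.\ can now be solved. So the plan is to analyze local integrability in a neighbourhood of a point $p_0 = (X_0,Y_0,Z_0)\in S$. First handle the case $n=1$: here $[\ ,\ ]$ is (up to sign) the determinant on $\mathbb R^2$, vanishing of $[Y,Z]$ means $Y,Z$ are proportional, and near a generic point of $S$ one can choose coordinates in which $f_1,f_2,f_3$ look like $x$, $y$, $x-y$ (or $x+y$) plus smooth nonvanishing factors — this is precisely the model of Example \ref{ex:xy}, which yields the extra condition $\operatorname{Re}(\lambda_1+\lambda_2+\lambda_3) > -2$. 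One should also check the most degenerate point of $S$ (all three vectors proportional) separately and confirm it imposes nothing stronger; a direct polar-coordinate computation as in the proof of Example \ref{ex:xy} should settle this.

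For $n\ge 2$ the claim is that no extra condition appears, and the tool is Example \ref{ex:xyQz}. Near a point $p_0\in S$ one chooses a Darboux-type local model: fix $Z_0$, write $X$ and $Y$ in suitable linear coordinates adapted to $Z_0$ and the symplectic form, so that $[Z,X]$ and $[Z,Y]$ become two of the coordinate functions (call them $x$ and $y$), while the third function $[X,Y]$, restricted to the slice $\{[Z,X]=[Z,Y]=0\}$, becomes — after completing the square — of the form $x+y+Q(\text{remaining variables})$ with $Q$ a nondegenerate quadratic form of some signature $(p,q)$, $p+q = 2(n-1)+(\text{contributions from the }Z\text{-directions})$, so that $p+q>2$ exactly when $n\ge 2$. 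Then Example \ref{ex:xyQz} applies verbatim and shows that the only constraint is $\operatorname{Re}\lambda_j>-1$.

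The main obstacle I anticipate is the bookkeeping in this last step: identifying the precise local normal form of the triple $([Y,Z],[Z,X],[X,Y])$ near an arbitrary point of the singular locus $S$, verifying that the ``remainder'' quadratic form $Q$ is nondegenerate with $p,q>0$ and $p+q>2$ for $n\ge 2$ (and understanding why this degenerates to the $\mathbb R^2$-model of Example \ref{ex:xy} when $n=1$), and checking uniformity as $p_0$ ranges over the various strata of $S$ (the stratum where all three vectors are mutually proportional being the most delicate). Once the correct normal form is in hand, invoking Lemma \ref{lem:converg}, Example \ref{ex:xy}, and Example \ref{ex:xyQz} finishes the proof, the "only if" direction being obtained as usual by exhibiting test points near $S$ at which the integrand reduces to one of these model integrals.
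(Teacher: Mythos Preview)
Your plan is essentially the paper's, and the tools you invoke (Lemma \ref{lem:converg}, Example \ref{ex:xy}, Example \ref{ex:xyQz}) are exactly the ones used. One point, however, is misidentified and will cost you unnecessary case analysis: the locus where the regularity condition \eqref{eqn:rIp} fails is \emph{not} all of your $S=\{[Y,Z]=[Z,X]=[X,Y]=0\}$, but only the much smaller set of triples $(X,Y,Z)$ that are pairwise proportional. Indeed, the same linear-algebra computation as in Proposition \ref{ex:inner} shows that a nontrivial relation $a\,df_1+b\,df_2+c\,df_3=0$ forces $bZ=cY$, $cX=aZ$, $aY=bX$ by nondegeneracy of $[\ ,\ ]$, hence proportionality; for $n\ge 2$ most of $S$ (e.g.\ three independent vectors in a Lagrangian plane) is therefore regular and already covered by Lemma \ref{lem:converg}. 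For $n=1$ your $S$ coincides with the proportional locus, so there are no ``generic'' versus ``most degenerate'' strata to separate.

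Once you localize at a point $(a_0\omega_0,b_0\omega_0,c_0\omega_0)$, the paper's coordinates make the reduction to the model examples immediate: write $X=a\omega$, $Y=b\omega+xJ\omega+u$, $Z=c\omega+yJ\omega+v$ with $u,v\in(\mathbb R\omega\oplus\mathbb R J\omega)^\perp\simeq\mathbb R^{2n-2}$; then $[Z,X]=ay$, $[X,Y]=ax$, and $[Y,Z]=-by+cx+[u,v]$. The quadratic form $[u,v]$ on $\mathbb R^{4n-4}$ has signature $(2n-2,2n-2)$, so for $n\ge 2$ Example \ref{ex:xyQz} applies directly (with $p=q=2n-2$), and for $n=1$ the variables $u,v$ are absent and one lands exactly on Example \ref{ex:xy}. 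This is the normal form you are looking for; no completing-the-square or stratification of $S$ is needed. (Minor slip: $h_\lambda$ is homogeneous of degree $\lambda_2+\lambda_3$ in $X$, not degree $0$, but this does not affect the passage from $M$ to $\widetilde M$.)
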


\begin{proof}[Proof of Proposition \ref{ex:symp}]
Since $h_\lambda(X,Y,Z)$ is a homogeneous function of $X$ (and also,
that of $Y$ and $Z$), $h_\lambda$ is integrable on $M$ if and only
if it is locally integrable on $\widetilde{M} :=
 (\mathbb{R}^{2n} \setminus \{0\}) \times
 (\mathbb{R}^{2n} \setminus \{0\}) \times
 (\mathbb{R}^{2n} \setminus \{0\})$.

Similarly to the way of establishing \eqref{eqn:abc}
 in the proof of Example \ref{ex:inner},
we see that the regularity assumption \eqref{eqn:rIp} for $(X,Y,Z)
\in \widetilde{M}$ fails if and only if the three vectors $X,Y,Z \in
\mathbb{R}^{2n} \setminus \{0\}$ are proportional to each other.
Hence, $h_\lambda$ is locally integrable on $\widetilde{M} \setminus
\{ (a\omega,b\omega,c\omega) : \omega \in S^{2n-1}, \ a,b,c \in
\mathbb{R} \setminus \{0\} \}$ if and only if $\operatorname{Re}
\lambda_j > -1$ $(1 \le j \le 3)$.

Let us find the condition of integrability of $h_\lambda$ near the
point $(X,Y,Z) = (a_0\omega_0, b_0\omega_0, c_0\omega_0)$ for some
$a_0 > 0$, $b_0 \ne 0$, $c_0 \ne 0$, and $\omega_0 \in S^{N-1}$.
For this we take coordinates as
\begin{align*}
&X = a\omega,
\\
&Y = b\omega + xJ\omega + u,
\\
&Z = c\omega + yJ\omega + v,
\end{align*}
where $a,b,c,x,y \in \mathbb{R}$,
$\omega \in S^{2n-1}$,
and
$u,v \in (\mathbb{R}\text{-span}\{ \omega, J\omega \})^\perp$
$(\simeq \mathbb{R}^{2n-2})$,
and we consider the case where
$a-a_0, b-b_0, c-c_0$,
$x,y,u,v$ are near the origin.
In view of the relations $[\omega,J\omega] = -1$ and
$
\langle \omega,u \rangle = \langle \omega,v \rangle =
\langle J\omega,u \rangle = \langle J\omega,v \rangle = 0$,
we have
\begin{align}\label{eqn:bycx}
&h_\lambda(X,Y,Z) dX dY dZ
\nonumber
\\
&= | -by + cx + [u,v] |^{\lambda_1} |ay|^{\lambda_2} |ax|^{\lambda_3}
   a^{2n-1} da db dc d\sigma(\omega) dx dy du dv.
\end{align}
Since we are dealing with the local integrability for
$a,b,c \ne 0$,
the main issue is the local integrability against $dxdydudv$.

First, suppose $n=1$.
Then \eqref{eqn:bycx} is locally integrable if and only if
$|-by + cx|^{\lambda_1} |y|^{\lambda_2} |x|^{\lambda_3} dx dy$
is locally integrable on $\mathbb{R}^2$ for fixed
$b,c \ne 0$.
By Example \ref{ex:xy},
this is the case if and only if
$\operatorname{Re} (\lambda_1 + \lambda_2 + \lambda_3) > -2$
in addition to $\operatorname{Re} \lambda_j > -1$ $(1 \le j \le 3)$.

Second, suppose $n \ge 2$.
Then $[u,v]$ is a quadratic form on $\mathbb{R}^{4n-4}$ of signature
$(2n-2, 2n-2)$.
By Example \ref{ex:xyQz},
$|-by + cx + [u,v]|^{\lambda_1} |y|^{\lambda_2} |x|^{\lambda_3} dx dy
du dv$
is locally integrable on
$\mathbb{R} \times \mathbb{R} \times \mathbb{R}^{2n-2} \times
 \mathbb{R}^{2n-2}$
if and only if $\operatorname{Re} \lambda_j > -1$ $(1 \le j \le 3)$
and this estimate is locally uniform with respect to
$a,b,c$ $(\ne 0)$ and $\omega \in S^{2n-1}$.
Hence, the right-hand side of \eqref{eqn:bycx} is locally integrable
if $\operatorname{Re} \lambda_j > -1$ $(1 \le j \le 3)$.

Thus, the proof of Proposition \ref{ex:symp} is completed.
\end{proof}

\begin{proposition}
\label{prop:6.9}
Let $M = S^m\times S^m\times S^m$ and
\[h_\lambda (X,Y,Z) = \vert Y-Z\vert^{\lambda_1} \vert Z-X\vert^{\lambda_2} \vert Z-Y\vert^{\lambda_3}\ .
\]
Then the triple integral $\int_M h_\lambda(X,Y,Z) d\sigma(X)d\sigma(Y)d\sigma(Z)$ converges if and only if
\[ {\rm Re}\, \lambda_j > -m \quad (1 \le j \le 3),\quad {\rm Re} \, (\lambda_1+\lambda_2+\lambda_3) >-2m\ .
\]
\end{proposition}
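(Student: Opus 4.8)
The plan is to carry out a purely local analysis on the compact manifold $M=S^m\times S^m\times S^m$, in the spirit of the proof of Proposition~\ref{ex:symp}; note that here there is no per-variable homogeneity to exploit, since $|X-Y|$ depends only on the difference, so one cannot pass to punctured Euclidean spaces and must work directly on $M$. Replacing $h_\lambda$ by $|h_\lambda|$ reduces the question to real exponents, and since $M$ is compact the triple integral converges absolutely if and only if $|h_\lambda|$ is locally integrable at every point of $M$. The only analytic input will be the elementary model fact that, for $w\in\mathbb R^k$, the function $|w|^{\lambda}$ is locally integrable near $0$ if and only if $\operatorname{Re}\lambda>-k$ (polar coordinates in $\mathbb R^k$); this is the prototype already underlying Example~\ref{ex:xj}, and the two-variable estimate used below is the $m$-dimensional analogue of Example~\ref{ex:xy}.

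Next I would stratify the coincidence set $D:=\{Y=Z\}\cup\{Z=X\}\cup\{X=Y\}$ in $M$. Off $D$ the integrand is smooth and nonvanishing, hence locally integrable with no constraint. A point of $D$ lies on exactly one of the three codimension-$m$ submanifolds $\{Y=Z\}$, $\{Z=X\}$, $\{X=Y\}$ unless $X=Y=Z$, in which case it lies on the diagonal $\Delta$ of codimension $2m$; it is impossible for exactly two of the three differences to vanish. Near a point of, say, $\{Y=Z\}\setminus\Delta$, the factors $|Z-X|^{\lambda_2}$ and $|X-Y|^{\lambda_3}$ are smooth and bounded away from $0$, while in a coordinate chart for the $Z$-variable centred at $Y$ the chord length $|Y-Z|$ is bounded above and below by constant multiples of the coordinate distance; integrating the bad factor over the $m$ normal directions and invoking the model fact, local integrability there holds if and only if $\operatorname{Re}\lambda_1>-m$. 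Running the three strata shows that $\operatorname{Re}\lambda_j>-m$ for $j=1,2,3$ is necessary and, away from $\Delta$, sufficient.

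It remains to analyse a neighbourhood of a point $(X_0,X_0,X_0)\in\Delta$. In a chart around $X_0$, writing $X',Y',Z'$ for the coordinate images and setting $u:=Y'-X'$, $v:=Z'-X'$, one checks that $|Y-Z|$, $|Z-X|$, $|X-Y|$ are comparable (with smooth positive comparison factors) to $|u-v|$, $|v|$, $|u|$ respectively, and that all Jacobians are smooth and positive; the residual $X'$-variable is harmless. Thus local integrability near $\Delta$ is equivalent to local integrability near the origin of $\mathbb R^m_u\times\mathbb R^m_v$ of the function $|u-v|^{\operatorname{Re}\lambda_1}|v|^{\operatorname{Re}\lambda_2}|u|^{\operatorname{Re}\lambda_3}$. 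I would establish the criterion for this (the $m$-dimensional version of Example~\ref{ex:xy}) exactly as before: applying the argument of the previous paragraph on the sphere $S^{2m-1}\subset\mathbb R^{2m}$ --- whose three relevant zero loci $\{u=0\}$, $\{v=0\}$, $\{u=v\}$ are pairwise disjoint codimension-$m$ submanifolds --- shows that the restriction of the function to $S^{2m-1}$ is integrable precisely when $\operatorname{Re}\lambda_j>-m$ ($j=1,2,3$); assuming this, polar coordinates $(u,v)=\rho\Theta$ in $\mathbb R^{2m}$ split off the radial integral $\int_0^1\rho^{\operatorname{Re}(\lambda_1+\lambda_2+\lambda_3)+2m-1}\,d\rho$, which is finite if and only if $\operatorname{Re}(\lambda_1+\lambda_2+\lambda_3)>-2m$, while if some $\operatorname{Re}\lambda_j\le -m$ the integral already diverges away from the origin. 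Collecting the conditions coming from $\Delta$ and from the other three strata gives exactly the stated criterion.

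The one genuine obstacle is the diagonal $\Delta$: there all three distance functions vanish simultaneously and violate the regularity hypothesis~\eqref{eqn:rIp} of Lemma~\ref{lem:converg} --- indeed they are not even differentiable there --- so the clean criterion of that lemma does not apply and has to be replaced by the explicit chart-plus-polar-coordinate computation above. The extra inequality $\operatorname{Re}(\lambda_1+\lambda_2+\lambda_3)>-2m$ is precisely the price of this degeneration, mirroring the appearance of $\operatorname{Re}(\lambda_1+\lambda_2+\lambda_3)>-2$ in the $n=1$ symplectic case of Proposition~\ref{ex:symp}.
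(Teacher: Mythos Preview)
Your proposal is correct and follows essentially the same approach the paper indicates (``the proof follows the same lines as before''), namely local analysis stratified by how many of the three distances vanish, with the diagonal handled via the $m$-dimensional analogue of Example~\ref{ex:xy}. You have supplied the details the paper omits, including the correct observation that the per-variable homogeneity trick used in Proposition~\ref{ex:symp} is not available here, so one works directly on $M$ in charts rather than passing to punctured Euclidean spaces.
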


The proof follows the same lines as before.

\bigskip
\noindent \textbf{Acknowledgement.}
     The second author is partially supported by Grant-in-Aid
     for Scientific Research (B) (18340037), Japan
     Society for the Promotion of Science, and the
     Alexander von Humboldt Foundation.

\footnotesize{

}
\bigskip
\footnotesize{ \noindent Addresses: (JLC) Institut \'Elie Cartan
(CNRS UMR 7502), Universit\'e Henri Poincar\'e Nancy 1,
B.P. 70239, F-54506 Vandoeuvre-l\`es-Nancy, France.\\
(TK) Graduate School of Mathematical Sciences, The University of
 Tokyo,
3-8-1 Komaba, Meguro, Tokyo, 153-8914 Japan.\\
(B\O ) Matematisk Institut, Byg.\,430, Ny Munkegade, 8000 Aarhus C,
Denmark.\\
(MP) Laboratoire de Math\'ematiques, (CNRS FRE 3111), Universit\'e
de Reims, B.P. 1039,  F-51687 Reims, France.\bigskip

\noindent \texttt{{Jean-Louis.Clerc@iecn.u-nancy.fr,
 toshi@ms.u-tokyo.ac.jp,
 orsted@imf.au.dk,
 pevzner@univ-reims.fr.}}

}
\end{document}